\documentclass[12pt,letterpaper]{amsart}
\usepackage{mathabx}
\usepackage{nameref}
\usepackage{graphicx}
\usepackage{xcolor}
\usepackage{caption}
\usepackage{subcaption}
\usepackage{systeme}
\usepackage{float}
% the tamplete I use is made by Ravi Vakil
\usepackage[T1]{fontenc}
\usepackage{amsmath}
\usepackage{mathtools}
\usepackage{epic,eepic,latexsym, amssymb, amscd, amsfonts}
\usepackage[all]{xy}
\usepackage{url}
\usepackage{hyperref}
\usepackage{enumerate}
% Picture stuff
% \DeclareMathSizes{10}{10}{10}{10}
 \newlength{\baseunit}               % the basic unit length
         % width of the picture
          % depth of the picture
                % with between left margin and picture
 \newcount{\numlines}                % depth of picture (in number of lines)
 \setlength{\baseunit}{0.05ex}

% from Allen K and Terry T's notices article
\setlength{\oddsidemargin}{0cm} \setlength{\evensidemargin}{0cm}
\setlength{\marginparwidth}{0in}
\setlength{\marginparsep}{0in}
\setlength{\marginparpush}{0in}
\setlength{\topmargin}{0in}
\setlength{\headheight}{0pt}
\setlength{\headsep}{0pt}
\setlength{\footskip}{.3in}
\setlength{\textheight}{9.2in}
\setlength{\textwidth}{6.5in}
\setlength{\parskip}{4pt}

\makeatletter
\newcommand*{\rom}[1]{\expandafter\@slowromancap\romannumeral #1@}
\makeatother
% End template

\numberwithin{equation}{section}

\newtheorem{thm}{Theorem}
\numberwithin{thm}{section}
\newtheorem{lem}[thm]{Lemma}
\newtheorem{Con}[thm]{Construction}
\newtheorem{defi}[thm]{Definition}
\newtheorem{prop}[thm]{Proposition}
\newtheorem{cor}[thm]{Corollary}
\newtheorem{rem}[thm]{Remark}

\newcommand{\pa}{\partial}
\newcommand{\mes}{{\boldsymbol{\eta}}}
\newcommand{\F}{ \mathbf F}
\newcommand{\ka}{\kappa}

\newcommand{\w}{\omega}

\newcommand{\ii}{{\mathbf i}}

\newcommand{\GT}{\mathbb{GT}}
\newcommand{\la}{\lambda}
\newcommand{\st}{\mathrm{st}}
\newcommand{\pr}{\mathrm{pr}}
\newcommand{\E}{\mathbf E}
\newcommand{\cov}{\mathrm{cov}}
\newcommand{\Cov}{\mathrm{Cov}}
\newcommand{\ep}{\varepsilon}

\def\L{\textup{L}} 
\def\La{\mathcal L}

\def\d{\textup{det}}

\def\H{\textup{H}}
\def\a{\Omega}
\def\b{\beta}

\def\w{\omega}

\hyphenation{li-nearly}
\hyphenation{pro-babilit\'es}
\hyphenation{re-sulting}
\hyphenation{singu-larities}
\hyphenation{asypto-tics}

%\tolerance=5000

\begin{document}
\pagestyle{plain}
\title{Asymptotics of random domino tilings of rectangular Aztec diamonds}
\author{Alexey Bufetov}

\address[Alexey Bufetov]{Department of Mathematics, Massachusetts Institute of Technology, Cambridge, MA, USA. E-mail: alexey.bufetov@gmail.com}

\author{Alisa Knizel}

\address[Alisa Knizel]{Department of Mathematics, Massachusetts Institute of Technology, Cambridge, MA, USA. E-mail: alisik@math.mit.edu}

%\date{Saturday, June 13, 2009.}
%\subjclass{Primary 14F30, Secondary 14F20. }
\begin{abstract}
We consider asymptotics of a domino tiling model on a class of domains which we call rectangular Aztec diamonds.  We prove the Law of Large Numbers for the corresponding height functions and provide explicit formulas for the limit. For a special class of examples, the explicit parametrization of the frozen boundary is given. It turns out to be an algebraic curve with very special properties. Moreover, we establish the convergence of the fluctuations of the height functions to the Gaussian Free Field in appropriate coordinates. Our main tool is a recently developed moment method for discrete particle systems. 
\end{abstract}
\maketitle
\section{Introduction}
\label{sec:1}
We study the asymptotic behavior of uniformly random domino tilings of domains drawn on the square grid. This model has received a significant attention in the last twenty five years (\cite{CEP}, \cite{CKP}, \cite{EKLP}, \cite{JPS}, \cite{J2}, \cite{K}). Let us briefly describe our results.

We consider a class of domains, which we call {\it rectangular Aztec diamonds}, see Figure \ref{fig:rect} for an example. This type of domains generalizes a well-known case of the Aztec diamond, introduced in \cite{EKLP}, at the same time inheriting many of its combinatorial properties. For instance, similar to the Aztec diamond, the domains we consider also have a rectangular shape with sawtooth boundary.

 \begin{figure}[h]
\includegraphics[width=0.45\linewidth]{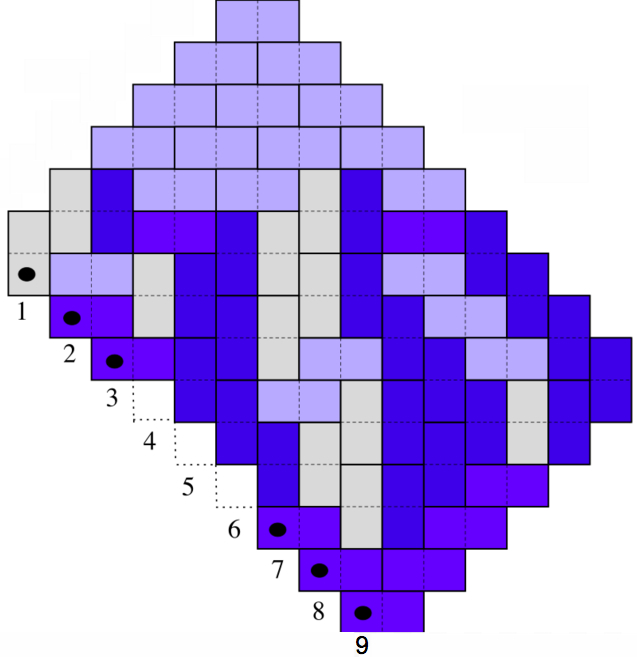}
\caption{Domino tiling of a Rectangular Aztec diamond $\mathcal R(6,
  (1, 2, 3, 7, 8, 9), 3 )$.}
  \label{fig:rect}
\end{figure}

The main feature of this class of domains is a variety of different boundary conditions which are allowed on one side of the rectangle. These boundary conditions are parameterized by configurations of boxes with dots as presented in Figure \ref{fig:rect}. When the mesh size goes to zero the limit behavior of the boundary boxes can be parameterized by a probability measure $\mes$ on $\mathbb R$ with a compact support.
We are able to analyze a global asymptotic behavior of the uniform random tiling of such a domain for an arbitrary choice of this measure.

%The known results about random domino tilings can be divided into two types. On the one hand, there are results that address general domains of the square grid. On the other hand, there is more detailed information available for some specific classes of models: The most studied example is the Aztec diamond ( see ..., ...). In the latter case, the information comes because ... In a sense, this paper lies between these two classes: We have

\textbf{Limit shape.} A domino tiling can be conveniently parameterized by the so-called \textit{height function}, see Section \ref{sec:height}.  It is an integer-valued function on the
vertices of the square grid inside the  domain, which satisfies certain
conditions (see Definition \ref{defi_height} for further details). There is a one-to-one
correspondence between tilings and height functions (as long as
the height is fixed at one vertex). A random domino tiling naturally gives rise to a random height function.
In Theorem \ref{LLN} we prove that for an arbitrary measure $\mes$ a random height function converges to a deterministic function as the mesh size of the grid goes to zero, furthermore, we give an explicit formula for it.

\begin{figure}[h]
\includegraphics[width=0.4 \linewidth]{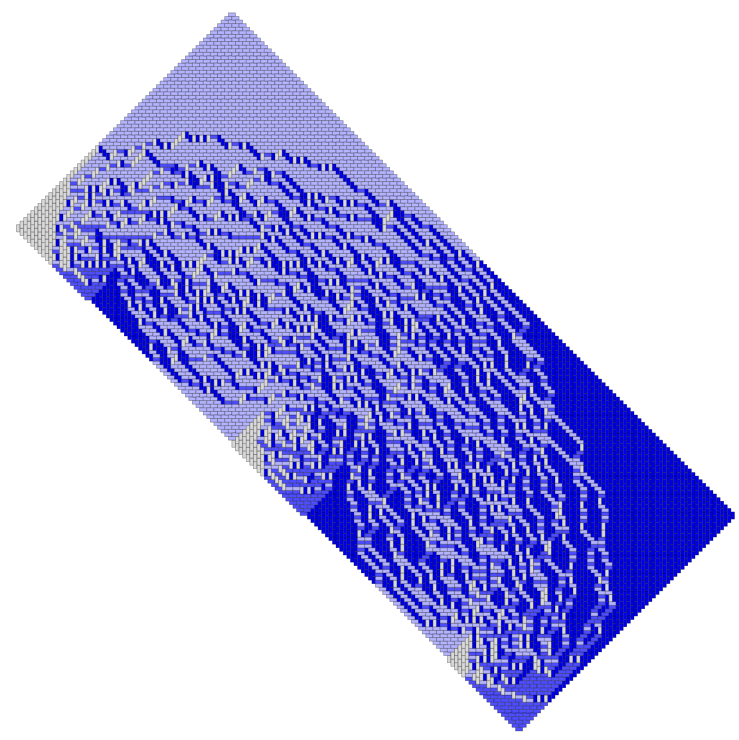}
\caption{A limit shape simulation. There is a formation of
  brick-wall pattern in the areas near the boundary of the domain. }
  \label{fig:example}
\end{figure}

In \cite{CKP} it was shown that the limit shape exists for a wide class of domains on the square grid and can be found as a solution to a certain variational problem. Our approach is different and provides an explicit formula for the limit height function. Our computation of the limit shape is closely related to the notion of the free projection from the Free Probability Theory (see Remark \ref{rem:lln}).

\textbf{Frozen boundary.} Typically, a limit shape forms frozen facets, that is areas, where only one type of domino is present. There also exists a connected
open {\it liquid region} inside the domain in which arbitrary local configurations of dominos
are present. The curve, which separates the
liquid region from the frozen zones is called {\it frozen
  boundary}.

In a particular case, when the measure $\mes$ is a uniform measure of density $1$ on a union of $s$ segments such that their lengths add to $1$, we give an explicit parametrization of the frozen boundary, see Theorem \ref{frozen_b}. It turns out to be an algebraic curve of rank $2s$ and genus zero with very special properties. The degree of the frozen boundary linearly depends on the number of segments. Therefore, the subclass of domains we consider provides a diverse variety of limit shapes of an arbitrary complexity.

\begin{figure}[h]
\includegraphics[width=0.4\linewidth]{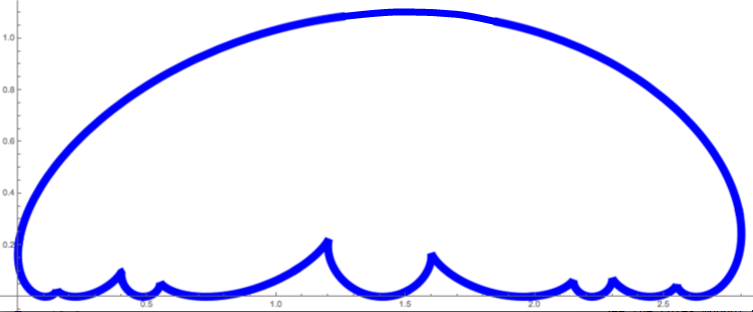}
\caption{An example of the frozen boundary with $s=5.$ }
  \label{fig:boo}
\end{figure}

%Thus, our results suggest, how the results of \cite{KO}

Moreover, our formulas allow to analyze the frozen boundary for an arbitrary measure $\mes$. We discuss several examples in Section \ref{sec:examples}.

\textbf{Fluctuations.} For arbitrary boundary conditions on one side of the rectangular Aztec diamond we prove a Central Limit Theorem for a global behavior of the random height function (see Theorem \ref{theorem:covariance-domino}), which is the main result of this paper. We show that after a suitable change of coordinates the fluctuations are described by the \textit{Gaussian Free Field}. The appearance of the Gaussian Free Field as a universal object in this class of probabilistic tiling models originates from the work of Kenyon (see \cite{K}, \cite{K1} ).

In \cite{K} Kenyon proved a central limit theorem for uniformly random tilings of domains of an arbitrary shape, but with very special boundary conditions such that the limit shape does not have any frozen facets. In contrast, we analyze rectangular domains with arbitrary boundary conditions on one of the sides and the limit shape in our case always has frozen facets.  The 
fluctuations of the liquid
region for a random tiling model containing both frozen facets and liquid
region were first studied in \cite{BF1}.

Depending on the boundary conditions, the Law of Large Numbers can have a quite complicated form. It is reflected in a (possibly complicated) choice of the \textit{complex structure}, that is a map from the liquid region into the complex half-plane. In other words, it is the choice of the coordinate in which the Gaussian Free Field appears as a limit object.

\bigskip

A parallel (and actually more developed) story exists for the case of \textit{lozenge} tilings. We refer to \cite{K2} for the exposition and further references on the subject. Both the domains we consider and the fluctuation results are close in spirit to \cite{P1}, \cite{P2}, however, the approach we take is entirely different.

 We use a moment method for this type of problems. It was introduced and developed in \cite{BG}, \cite{BG2}. Let us comment on two other known methods. The method based on the study of a family of orthogonal polynomials, which was extensively used in the case of the Aztec diamond, does not seem to be available in our setting. A large class of tiling problems fits the framework of Schur processes which was introduced in \cite{OR}. It was shown in \cite{OR} that any Schur process is a determinantal process with a correlation kernel suitable for asymptotics analysis. Papers \cite{DM},\cite{P1}, \cite{P2} study the lozenge tiling model which is combinatorially similar to a Schur process yet does not fit this framework; a significant effort was necessary to derive a correlation kernel there. It is an important challenge to find a reasonable correlation kernel for the tiling model studied in the present paper and to perform its asymptotic analysis. However, we believe that the moment method
is the most suitable method for the analysis of the global behavior in this class of problems (see Remark \ref{rem:clt} for further comments).

In this paper we give a ``model case'' analysis of a specific class of domino tilings. However, we believe that the moment method and the tools developed in this paper are applicable in many other models. Let us mention some of them.

The domino tiling model considered in this paper can be interpreted as a random collection of non-intersecting lines, see Figure \ref{fig:lines} and Section \ref{sec:paths} for details. In the case of the Aztec diamond this interpretation was first used by Johansson in \cite{J}, and later many similar and more general ensembles of non-intersecting lines were studied, see \cite{BS}, \cite{BF}, \cite{BCC}, \cite{BBCCR} and references therein. Some of these ensembles can be viewed as dimer models. More precisely, in \cite{BBCCR} it was shown how to interpret an arbitrary Schur process as a dimer model on a so-called rail yard graph.

The main novelty of the presented approach is that we consider non-intersecting lines with \textit{arbitrary} boundary conditions on one side. We suggest that all such models can be analyzed with the use of the moment method and the results of this paper. Because the global behavior of the height function significantly depends on the boundary conditions in all these models, we expect further interesting results in this direction.

This paper is organized as follows. In Section \ref{sec:2} we discuss combinatorial properties of rectangular Aztec diamonds. In Sections \ref{sec:3} and \ref{sec:4} we analyze the limit shape of the tilings. In Section \ref{sec:frozen} we study the frozen boundary for a specific class of examples. In Section \ref{sec:6} we prove the global Central Limit Theorem. In Section \ref{sec:examples} we study some examples which are not covered by Section \ref{sec:frozen}. In \nameref{sec:A} we briefly comment on a more general class of probability measures on rectangular Aztec diamonds. In \nameref{sec:B} we provide a result on the local behavior of these tilings.

\begin{figure}[h]
\includegraphics[width=0.3 \linewidth]{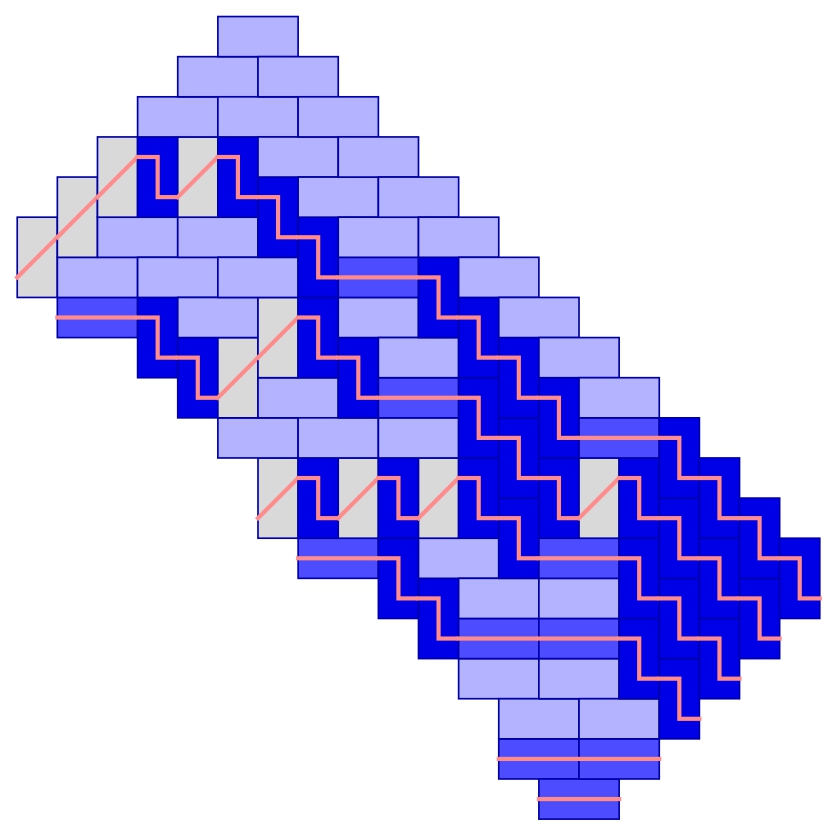}
\caption{Rectangular Aztec diamond $\mathcal R(6, (1,2,4,5,7,8), 12)$ and the corresponding set of non-intersecting lines. }
  \label{fig:lines}
\end{figure}

\subsection*{Acknowledgements}  The authors are deeply grateful to Alexei Borodin and Vadim Gorin for very helpful discussions. The authors would like to thank anonymous referees for many valuable comments which helped to improve the manuscript.

%\tableofcontents
 \section{The model description} 
\label{sec:2}
In this section we study the combinatorics of the model. We establish
a bijection between the domino tilings of a rectangular Aztec diamond
and the sequences of Young diagrams with some special properties. This will
allow us to bring in the machinery of the Schur functions to study the
asymptotics. The key observation is Proposition \ref{uniform}.

In the end of the section we discuss another combinatorial realization
for our model through the non-intersecting line ensembles, which was mentioned in
introduction.

\subsection{Combinatorics of the model}
Let us present a formal definition of the domain we are considering.

\begin{defi}
Let $N\in \mathbb N$ and $\a=(\a_1, \dots, \a_N)$, where $1= \a_1
<\a_2<\dots<\a_N$ and $\a_i\in\mathbb N$. Set $m=\a_N-N.$

Introduce the coordinates $(i, j)$ as in Figure ~\ref{example}.
Let us denote by $C(i,j)$ a unit square with the vertex coordinates $(i, j),$ $(i-\frac{1}{2}, j+\frac{1}{2}),$ $(i+\frac{1}{2}, j+\frac{1}{2}),$ $(i, j+1).$

The rectangular Aztec diamond $\mathcal R(N, \a, m)$ is a polygonal
domain which is built of $2N+1$ rows of unit squares $C(i, j).$ Let us enumerate the rows starting from the bottom as shown in Figure ~\ref{example}. Then
the $k$-th row consists of unit squares $C(i, j),$ where
\begin{itemize}
 \item $i=0,1,\dots, \a_N$ and $j=0,1,\dots, N-1,$ when $k=2s,$ $s=0,1,\dots, N;$
\item $i=\frac{1}{2}, \frac{3}{2}, \dots, \a_N-\frac{1}{2}$ and $j=\frac{1}{2},\frac{3}{2},\dots, N-\frac{1}{2},$ when $k=2s+1,$ $s=1,2,\dots, N;$
\item $i=\a_l-\frac{1}{2}$ and $j=-\frac{1}{2}$ for $l=1,2,\dots , N,$ when $k=1.$  We
call this row {\it the boundary} of $\mathcal R(N, \a, m)$.
\end{itemize}
\end{defi} 

 \begin{figure}[h]
\includegraphics[width=0.35\linewidth]{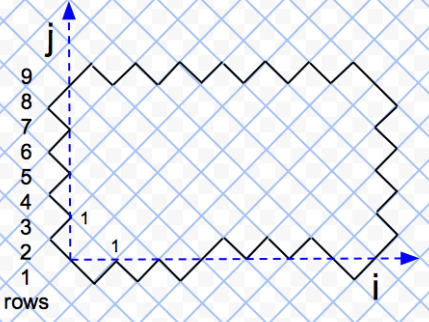}
\caption{Rectangular Aztec diamond $\mathcal R(4, \a=(1,2,3, 7), 3).$}
  \label{example}
\end{figure}

\begin{defi}
A domino tiling of a rectangular Aztec diamond $\mathcal R(N, \a,m)$ is a
set of pairs $(C(i_1, j_1), C(i_2, j_2)),$ called dominos, such that the unit squares
$C(i_1,j_1), C(i_2, j_2)\subset \mathcal R(N, \a,m)$ share an edge and
every unit square belongs to exactly one domino.

Let us denote the set of domino tilings of  $\mathcal R(N, \a,m)$ by
$\mathfrak D(N,\a, m).$
\end{defi}

\begin{defi}
Let $D\in\mathfrak D(N, \a, m )$ be a domino tiling of $\mathcal R(N,
\a, m).$ We call a domino $d=(C(i_1, j_1), C(i_2, j_2))\in D$ a $V$-domino
if $max (i_1, i_2)\in \mathbb N$ and we call it a $\Lambda$-domino otherwise. In other words, the dominos going upwards starting from an odd row are $V$-dominos and those ones starting from an even row are $\Lambda$-dominos. We also call the corresponding
squares  $C(i_1, j_1), C(i_2, j_2)$ --- $V$-squares and
$\Lambda$-squares accordingly.
\end{defi}

     \begin{figure}[h]
\includegraphics[width=0.35\linewidth]{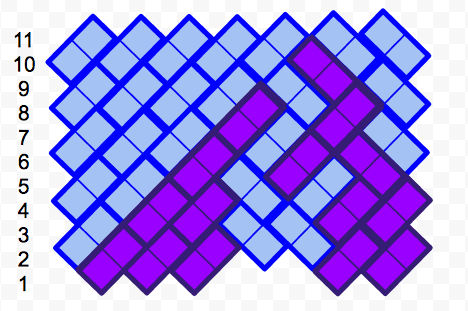}
\caption{Domino tiling of $\mathcal R(4, \a=(1,2,3, 7),3).$
  The $V$-squares are purple and $\Lambda$-squares are blue. }
  \label{fig:vdomino}
\end{figure}

\begin{lem} \label{good}
 The set of V-squares in $\mathcal R(N,
\a, m)$ determines the tiling uniquely.
\end{lem}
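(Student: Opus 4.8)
The plan is to recover the whole tiling from the set of $V$-squares by showing that both the $V$-dominos and the $\Lambda$-dominos are forced. Since every square lies in exactly one domino, knowing which squares are $V$-squares is the same as knowing the $V$/$\Lambda$-label of every square; it therefore suffices to prove that the given $V$-squares admit a \emph{unique} perfect matching by $V$-dominos, and symmetrically that the $\Lambda$-squares admit a unique matching by $\Lambda$-dominos. The matching coming from the original tiling shows that such a matching exists, so the entire content of the lemma is uniqueness.

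First I would record the local structure of a $V$-domino. Call a square $C(i,j)$ \emph{even-type} if $i\in\mathbb N$ and \emph{odd-type} if $i\in\mathbb N+\tfrac12$; by the row description these are exactly the squares in even and odd rows. Adjacent squares always have opposite type, with centers differing by $(\pm\tfrac12,\pm\tfrac12)$. In a $V$-domino the condition $\max(i_1,i_2)\in\mathbb N$ forces the even-type square to carry the larger horizontal coordinate, so a $V$-domino always joins an even-type square $C(i,j)$ to an odd-type square $C(i-\tfrac12,j\pm\tfrac12)$ sitting half a unit to its left.

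The key step is to organize the squares into columns by their horizontal coordinate and observe that $V$-dominos never leave an adjacent pair of columns. Every $V$-domino connects column $i$ (even-type) to column $i-\tfrac12$ (odd-type), and conversely an odd-type square in column $i-\tfrac12$ can only be the left half of a $V$-domino whose right half lies in column $i$; hence the $V$-adjacency graph splits as a disjoint union over the column pairs $\{i,\,i-\tfrac12\}$. Within one such pair I would order all squares by their vertical (center) coordinate: the even-type squares sit at half-integer heights and the odd-type squares at integer heights, so the two families interleave, and the two odd-type $V$-neighbors $C(i-\tfrac12,j\pm\tfrac12)$ of $C(i,j)$ are precisely the squares immediately below and above it in this order. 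Thus the $V$-adjacency graph on a column pair is a single path, and the graph on the given set of $V$-squares is an induced subgraph of a path, i.e.\ a disjoint union of paths. A disjoint union of paths that admits a perfect matching has a unique one (every component must be an even path), so the $V$-dominos are determined.

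Finally, the mirror-image computation applies to $\Lambda$-dominos: there $\max(i_1,i_2)\notin\mathbb N$ puts the odd-type square on the right, and the same column/height bookkeeping shows the $\Lambda$-adjacency graph is again a disjoint union of paths, giving a unique $\Lambda$-matching of the $\Lambda$-squares. Combining the two matchings reconstructs the tiling. I expect the only delicate point to be the geometric bookkeeping of the previous paragraph---verifying that $V$-edges stay inside a fixed column pair and connect only height-consecutive squares, which is exactly what rules out cycles in the adjacency graph; the boundary row $C(\a_l-\tfrac12,-\tfrac12)$ needs no special treatment, since truncating the columns only shortens the paths.
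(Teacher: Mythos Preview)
Your argument is correct, and it is a genuinely different route from the paper's. The paper reconstructs the tiling \emph{row by row}: it pairs the $V$-squares of row $1$ with those of row $2$ greedily from left to right, then the $\Lambda$-squares of row $2$ with those of row $3$, and so on, arguing at each stage that the leftmost unmatched square has only one admissible partner. Your proof instead decomposes by \emph{column pairs} $\{i,i-\tfrac12\}$ and reduces uniqueness to the graph-theoretic fact that an induced subgraph of a path has at most one perfect matching. Your version is cleaner and more conceptual---no inductive bookkeeping, and the symmetry between $V$- and $\Lambda$-squares is manifest. The paper's row-based argument, on the other hand, is tailored to what comes next: the bijection with sequences of Young diagrams (Theorem~\ref{bij}) is organized row by row, and the paper's proof of the present lemma already rehearses the counting (e.g.\ that row $2s+1$ has $N-s$ $V$-squares) that drives that bijection.
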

\begin{proof}
Let us reconstruct the tiling given the set of V-squares coming
from some unknown tiling. We will show that we can reconstruct it in a
unique way. Let us enumerate the squares in each row from left to right.

We start by looking at the V-squares in the first row. All squares in
the first row are V-squares and there are exactly $N$ of them. Let us take the first V-square in the first row. We will pair it with the first V-square in the second row. We can always do it since we know that there exists at least one domino tiling with this set of V-squares. We can proceed until the end of the first row. Note that at this point we have used all V-squares from the second row and this is the only way we could pair the V-squares from the first row with something.

Now we pair the first $\Lambda$-square in the second row with the first $\Lambda$-square in the third row. We can proceed until the end of the second row. Note that at this point we have used all squares from the second row and all  $\Lambda$-squares from the third row.  Moreover, this is the only way we could pair $\Lambda$-squares from the second row with something. Now we look at V-squares in the third row and notice that there are $N-1$ of them. This is because the total number of squares in the third row is less by one than the total number of squares in the second row. We start pairing them with V-squares from the third row in the same way.

We continue in this fashion until we pair all $\Lambda$-squares from row $2N$ with all the squares from row $2N+1.$

\end{proof}

\begin{defi} Let $\mu$ be a $n$-tuple of natural numbers $\mu=(\mu_1
  \geq \mu_2 \dots \geq \mu_n\geq 0).$  We will denote $\ell(\mu)=n.$ A Young diagram $Y_{\mu}$ is a set of boxes in
  the plane with $\mu_1$ boxes in the first row, $\mu_2$
  boxes in the second row, etc.

We call a Young diagram $Y_\mu$ rectangular $n\times m$ Young diagram if
$m=\mu_1=\mu_2=\dots=\mu_n.$ Let us denote it $Y_{n\times m}.$
\end{defi}

\begin{defi}
Let $Y_{\mu}$ be a Young diagram, where $\mu=\mu_1 \geq \mu_2 \dots
\geq \mu_n\geq 0.$ A dual Young diagram $Y_{\mu^{\vee}}$ is obtained by
taking the transpose of the original diagram $Y_{\mu}$. Explicitly
we have that $\mu^{\vee}_i$ is equal to the length of the $i$-th column of $Y_{\mu}.$

\end{defi}

\begin{Con} \label{con}  
 Due to Lemma \ref{good} we can encode any domino tiling
$D\in\mathfrak D(N, \a, m)$ pictorially as it is shown in Figure
$\ref{fig:points}.$ We simply put a yellow node in the center of every V-square from an even row and a pink node in the center of every V-square from an odd row. The configuration of these nodes determines the tiling uniquely. In the case of the Aztec diamond such encoding was suggested by Johansson \cite{J}.

 \begin{figure}[h]
\includegraphics[width=0.35\linewidth]{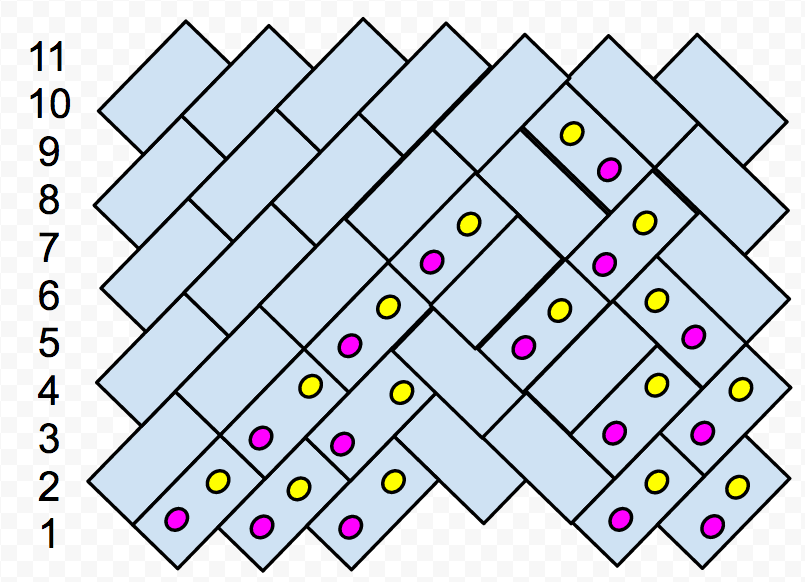}
\caption{An example of a domino tiling of  $\mathcal R(5, \a=(1,2,3,
  6,7), 3).$ We put nodes in the centers of $V$-squares. }
  \label{fig:points}
\end{figure}

Let us associate to every configuration of the nodes the following
sequence of Young diagrams $Y_j$. Each diagram $Y_j$
corresponds to the $j$-th row $j=1, 2, \dots ,2N$, which has $n_j$
V-squares and $m_j$ $\Lambda$-squares. Let us take a rectangular
$n_j\times m_j$ Young diagram. We start drawing a stepped line $\ell_j$ inside it
starting from the left bottom corner. At step $k$ if
the $i$-th square in the row is a V-square $\ell_j$ goes up by one, otherwise
the line goes to the right by
one. The stepped line $\ell_j$ is the boundary of
a Young diagram, see Figure $\ref{fig:diagram}.$ 

Let us look at the
$j$-th row. It has $N-\lfloor \frac{j-1}{2} \rfloor$ V-squares in positions
$i_1, \dots, i_{N-\lfloor \frac{j-1}{2} \rfloor}$ starting from the
left. Then the Young diagram we obtain corresponds to the $N-\lfloor
\frac{j-1}{2} \rfloor$-tuple $(i_{N-\lfloor \frac{j-1}{2} \rfloor}-\lfloor
\frac{j-1}{2} \rfloor, \dots, i_2-2,
  i_1-1).$

To construct a Young
diagram corresponding to the boundary row we complete the row by
virtually adding $m$ $\Lambda$-squares so that $\mathcal R(N, \a,m)$ becomes
a proper rectangular with sawtooth boundary. Let
us denote the Young diagram corresponding to the boundary row
$Y_{\omega},$ where $\omega$ is an $N$-tuple of integers, more precisely,  $\omega=(\a_N-N, \dots, \a_1-1). $
\end{Con}

 \begin{figure}[h]
\includegraphics[width=0.45\linewidth]{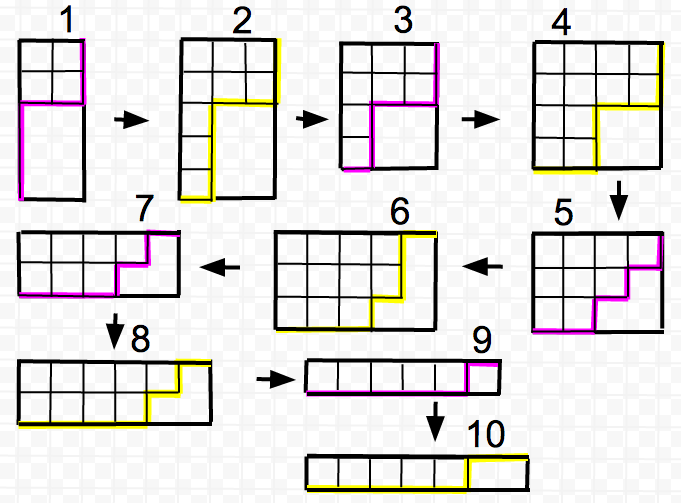}
\caption{A sequence of Young diagrams corresponding to the domino
  tiling $\mathcal R(5, \a=(1,2,3, 6,7),3)$ presented in Figure
  $\ref{fig:points}.$}
  \label{fig:diagram}
\end{figure}

\begin{defi}
Let $Y_{\omega}$, where $\omega=\w_1 \geq \w_2 \dots \geq \w_N \geq 0,$ be a Young diagram contained in a rectangle $N\times m$ Young diagram $Y_{N\times m}$. Let $\mathcal S(Y, N, m)$ be the following set of sequences of Young diagrams

$$\mathcal S(Y_{\mu}, N, m)=\{(Y_{\mu^{(N)}}=Y_\omega, Y_{\nu^{(N)}},\dots,Y_{\mu^{(1)}}, Y_{\nu^{(1)}}\},$$

such that
\begin{itemize} \label{prop} 
\item $\ell(\mu^{(i)})=i$ and $\ell(\nu^{(i)})=i$ for $i=1, 2,\dots, N.$ 

\item $ Y_{\mu^{(i)}}\subset  Y_{i\times (m+N-i)}$ for $i=1, 2, \dots,
  N;$ 
\item $ Y_{\nu^{(i)}}\subset Y_{i\times (m+N-i+1 )}$  for $i=1, 2, \dots,
  N;$
\item  $Y_{\mu^{(i)}}\subset Y_{\nu^{(i)}}$ for $i=1, 2, \dots, N;$
\item $Y_{\nu^{(i)}}$ $\setminus$ $Y_{\mu^{(i)}}$ is a vertical strip of
  length $l\leq i$ for $i=1, 2,\dots, N;$ that is $ Y_{\nu^{(i)}}$ can be obtained from $
 Y_{\mu^{(i)}}$ by adding $l$ boxes, no two in the same row, see Figure
  \ref{strip}.
\item $Y_{\nu^{(i+1)}}\setminus Y_{\mu^{(i)}}$ is a horizontal
  strip. In other words, they interlace $(Y_{\mu^{(i)}}\prec Y_{\nu^{(i+1)}}),$ that is
$$\nu^{(i+1)}_1\geq \mu^{(i)}_1\geq \dots \geq
\nu^{(i+1)}_{i}\geq \mu^{(i)}_{i}\geq \nu^{(i+1)}_{i+1},  \text{ for } i=1, 2, \dots, N-1.$$
\end{itemize}

\end{defi}

 \begin{figure}[h]
\includegraphics[width=0.30\linewidth]{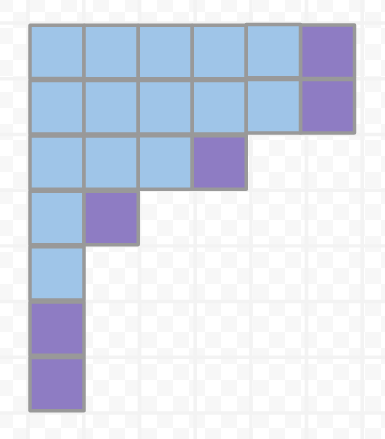}
\caption{Purple boxes form a vertical strip of length $6.$}
  \label{strip}
\end{figure}

\begin{thm}\label{bij}
Construction \ref{con} defines a map $$Y\colon \mathfrak
D(N,\a, m)\rightarrow \mathcal
S(Y_{\omega}, N, m).$$ The map is bijective.
\end{thm}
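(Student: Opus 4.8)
The plan is to factor $Y$ through the configuration of V-squares and to read each defining property of $\mathcal S(Y_\omega,N,m)$ off a local analysis of how dominoes join two consecutive rows. By Lemma \ref{good} a tiling is determined by its set of V-squares, and the stepped-line recipe of Construction \ref{con} is exactly the classical bijection between an $r$-element set of marked positions in a strip of $n$ squares and a partition inside the $r\times(n-r)$ rectangle: if the V-squares of a row sit at positions $p_1<\cdots<p_r$, the associated partition has parts $\lambda_k=p_{r-k+1}-(r-k+1)$. This recipe is visibly invertible, so the sequence $(Y_j)_{1\le j\le 2N}$ determines and is determined by the positions of all V-squares; combined with Lemma \ref{good} this gives injectivity of $Y$ at once. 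The whole theorem thus reduces to showing that the sequences produced by honest tilings are precisely those obeying the bulleted conditions defining $\mathcal S(Y_\omega,N,m)$.

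First I would check that $Y$ lands in $\mathcal S(Y_\omega,N,m)$. Under the identification of $\mu^{(i)}$ with the odd row $2(N-i)+1$ and of $\nu^{(i)}$ with the even row $2(N-i)+2$, each of these rows carries exactly $i$ V-squares, giving $\ell(\mu^{(i)})=\ell(\nu^{(i)})=i$; the bottom (boundary) row produces $\mu^{(N)}=Y_\omega$ by the explicit computation in Construction \ref{con}. Since an odd row has $m+N$ squares and an even row $m+N+1$, the complementary counts of $\Lambda$-squares are $m+N-i$ and $m+N-i+1$, which are exactly the rectangle widths demanded of $\mu^{(i)}$ and $\nu^{(i)}$. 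The two strip conditions encode the two kinds of vertical adjacency. Passing from an odd row up to the even row above it is realized by V-dominoes, paired in increasing order as in the proof of Lemma \ref{good}; translating the resulting inequalities on V-square positions through the bijection of the first paragraph shows $Y_{\mu^{(i)}}\subset Y_{\nu^{(i)}}$ with $\nu^{(i)}/\mu^{(i)}$ adding at most one box per row, i.e. a vertical strip of length $\le i$. Symmetrically, passing from an even row up to the odd row above it is realized by $\Lambda$-dominoes pairing the complementary $\Lambda$-squares; as both rows then carry $m+N-i$ of them, the induced inequalities on V-square positions are exactly the interlacing $\mu^{(i)}\prec\nu^{(i+1)}$, so $Y_{\nu^{(i+1)}}/Y_{\mu^{(i)}}$ is a horizontal strip.

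For surjectivity I would run this in reverse. Given a sequence in $\mathcal S(Y_\omega,N,m)$, the inverse stepped-line map produces, row by row, a candidate set of V-squares of the correct cardinalities; it remains to see that this configuration comes from a tiling, i.e. that the greedy pairing of Lemma \ref{good} never stalls. The vertical-strip condition is precisely what lets the $i$ V-squares of an odd row be matched, in order, by legitimate V-dominoes to the $i$ V-squares of the even row above, and the interlacing condition is precisely what lets the $m+N-i$ $\Lambda$-squares of an even row be matched, in order, by legitimate $\Lambda$-dominoes to the $\Lambda$-squares of the odd row above. Since in every row the V-squares and $\Lambda$-squares are complementary and the two families of dominoes use disjoint squares across the successive transitions, the greedy procedure covers the whole domain and yields a (unique) tiling whose image under $Y$ is the prescribed sequence.

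The routine parts above are the counting and the invertibility of the stepped-line map; the one genuinely delicate point is the geometric translation in the second paragraph, namely verifying that a valid V-domino joining an odd row to the even row above shifts a marked position by exactly the amount that corresponds to adding at most one box in each row, and dually that a valid $\Lambda$-domino corresponds to adding at most one box in each column. This is where the half-integer offset between the two sublattices of squares and the precise shared-edge condition from the definition of a domino must be used. Once this single-step dictionary between admissible dominoes and strip moves is established, both the containment assertions of the second paragraph and the surjectivity of the third follow formally.
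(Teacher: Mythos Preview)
Your proposal is correct and follows essentially the same architecture as the paper's proof: injectivity via Lemma \ref{good}, well-definedness by reading off the strip conditions from the local geometry of V-dominoes and $\Lambda$-dominoes across consecutive rows, and surjectivity by running the greedy reconstruction of Lemma \ref{good} on the configuration of V-squares produced by inverting the stepped-line map. The one stylistic difference is that for the horizontal-strip (interlacing) condition the paper passes explicitly to the dual Young diagrams $Y_{\nu^{\vee,(i)}}$ and $Y_{\mu^{\vee,(i+1)}}$, observing that $\nu^{\vee,(i)}_j$ counts V-squares to the left of the $j$-th $\Lambda$-square from the right, whereas you argue the same point by invoking the complementarity of V- and $\Lambda$-squares directly; these are equivalent formulations of the same observation.
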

\begin{proof}
Let $\mathcal D\in
\mathfrak D(N,\a, m)$ be a domino tiling and let $Y(\mathcal
D)=(Y_{\mu^{(n)}=\omega}, Y_{\nu^{(N)}},\dots,Y_{\mu^{(1)}}, Y_{\nu^{(1)}})$. Let us first check that this map
is well-defined, i.e. $Y(\mathcal
D) \in  \mathcal
S(Y_\omega, N, m).$ Thus, we need to verify all the properties in Definition
\ref{prop}. Figure \ref{fig:step} illustrates the proof.

 By construction $Y_{\mu^{(i)}} \subset Y_{n_i\times m_i},$ where $n_i$ and
  $m_i$ are the number of $V$-squares and $\Lambda$-squares
 in the row with number $2(N-i)+1$. Following the proof of Lemma
  \ref{good} we see that $n_i=i$ and $m_i=N+m-i.$ Also, the length of
  $Y_{\mu^{(i)}}$ is equal to the number of $V$-squares in the
  corresponding row. Thus, we get $\ell(\mu^{(i)})=i.$
 Similarly we have  
$Y_{\nu^{(i)}}\subset Y_{i\times (m+N-i+1 )}$ and $\ell(\nu^{(i)})=i.$

 \begin{figure}[h]
\includegraphics[width=0.4\linewidth]{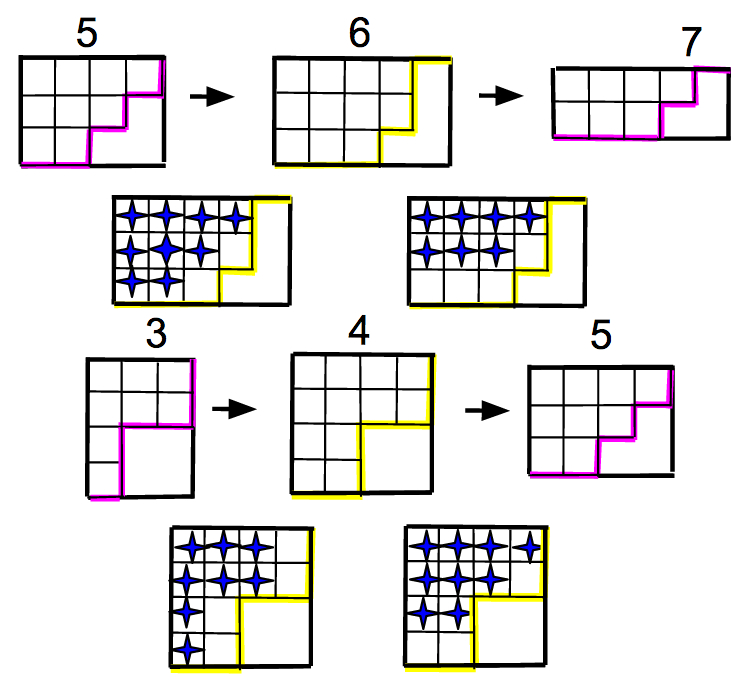}
\caption{ }
  \label{fig:step}
\end{figure}

Note that by construction $\mu_j^{(i)}$ is precisely the number of
$\Lambda$-squares to the left from the $j$-th $V$-square in the
corresponding row with number $k=2(N-i)+1$ . Let us look at this row. The
  $V$-squares from it are paired with $V$-squares from the next
  row in $\mathcal D$. It can be shown by induction that the number of
  $\Lambda$-squares to the left from the $j$-th $V$-square
 in the $k+1$ row is always equal or greater by one than the same quantity for the $j$-th $V$-square in the $k$-th row. It follows from
  the fact that if the $j$-th $V$-square is in position $n$ in the $k$-th row
  (i.e. it is the $n$-th square from the left in this row) it must be
  paired with the $j$-th $V$-square from the next row in position $n$
  or $n+1,$ see Figure \ref{fig:vdomino}. Therefore, $Y_{\mu_j^{(i)}}\subset
  Y_{\nu_j^{(i)}}$ and $Y_{\nu^{(i)}}$ $\setminus$ $Y_{\mu^{(i)}}$ is a vertical strip with a number of
  blocks $\leq i.$

 Consider $Y_{\nu^{\vee, (i)}},$ the Young diagram dual to $Y_{\nu^{(i)}}.$
   Note that by construction $\nu^{\vee, (i)}_j$ is precisely the number of
$V$-squares to the left from the $j$-th $\Lambda$-square in the
corresponding row with number $k=2(N-i)+2$, where this time we count
from the right. Let us look at this row. Then the $\Lambda$-squares
from the next row are paired with the $\Lambda$-squares from this row,
moreover, the $j$-th $\Lambda$-square in position $n$ is paired with
the $j$-th $\Lambda$-square in position $n$ or $n+1$ from the next row
counting from the right. Again, we
get that $Y_{\nu^{\vee, (i)}}$ $\setminus$ $Y_{\mu^{\vee, (i+1)}}$ is a vertical strip. Thus, $Y_{\mu^{(i+1)}}$ $\setminus$ $Y_{\nu^{(i)}}$ is a horizontal strip. Therefore, the
 interlacing condition holds.

So far we have checked that the map $Y$ is well-defined. From Lemma
\ref{good} it follows that it is injective.

Let us construct the inverse map $Y^{(-1)} \colon \mathcal
S(Y_{\alpha}, N, m) \rightarrow \mathfrak
D(N,\a, m).$ Inverting the Construction \ref{con} we see that each
element $y\in\mathcal
S(Y_{\alpha}, N, m)$ defines a configuration of V-squares. Our goal is
to show that there exists a tiling with such a configuration of
V-squares. We can reconstruct it using the same ideas as we used in
the proof of Lemma
\ref{good}. We start by pairing V-squares from the first row with
V-squares from the second row starting from the left. We can always
pair the $j$-th V-square from the first row in position $n$ with the $i$-th V-square from the second row
because it has to be in position $n$ or $n+1$ due to our assumptions
on $y.$ Then we proceed to the
next row and pair $\Lambda$-squares with $\Lambda$ squares in the
third row. The map $Y^{(-1)}$ is then well-defined and
injective. Therefore, $Y$ is a bijection.
\end{proof}

\subsection{Uniform measure on $\mathcal S(Y_{\alpha}, N, m)$.}
\label{sec:22}
Theorem \ref{bij} allows us to reduce any question about the uniform
measure on the set of domino tilings $\mathfrak D(N, \a, m)$ to the
same question for the uniform measure on $\mathcal S(Y_{\alpha}, N, m).$ This is the core of
our approach.

Let $U(N)$ denote the group of all $N\times N$ complex
unitary matrices. It is well-known that all irreducible
representations of $U(N)$ are parameterized by their highest weights,
which are \textit{signatures} of length $N,$ that is, $N$-tuples of integers
$\lambda=(\lambda_1\geq \lambda_2\geq\dots \geq \lambda_N).$ We denote by
$\GT_N$ the set of all signatures and the length of a signature is denoted by $\ell(\lambda)$. We call a signature
\textit{non-negative} if $\lambda_N\geq 0.$ Note that the set of all nonnegative
signatures $\GT_N^{+}$ is in bijection with the set of Young diagrams
$\mathbb Y_N$ with $N$ rows (rows are allowed to have zero length).

\begin{defi}
 Let $\lambda\in \GT_N.$ The rational Schur function is
\begin{equation}
s_{\lambda}(u_1,\dots,
u_N)=\frac{\d_{i,j=1,\dots,N}(u^{\lambda_j+N-j}_i)}{\prod\limits_{1\leq
    i<j\leq
    N}(u_i-u_j)}.
\end{equation}
\end{defi}

 \begin{prop}$ ($Weyl, \cite{W}$)$ The value of the character of the irreducible representation $\pi^\lambda$ corresponding to the signature
 $\lambda=(\lambda_1\geq \dots \geq \lambda_N )$ on a unitary matrix $\mathfrak u\in U(N)$ with eigenvalues $u_1,\dots,u_n$ is given by the rational Schur function:
$$ \textup{Trace}(\pi^\lambda(\mathfrak u))=s_\lambda(u_1,\dots,u_N). $$

 \end{prop}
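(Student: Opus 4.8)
This is the Weyl character formula for the unitary group, so the plan is to prove it by the standard route: realize the character as an alternating sum over the Weyl group and divide by the Weyl denominator. Write $\chi_\lambda(\mathfrak u):=\textup{Trace}(\pi^\lambda(\mathfrak u))$. Since $\chi_\lambda$ is a class function, it is determined by its restriction to the maximal torus $T\subset U(N)$ of diagonal unitaries, where it becomes a symmetric Laurent polynomial in the eigenvalues $u_1,\dots,u_N$, invariant under the Weyl group $W=S_N$ permuting the $u_i$. Using the weight-space decomposition of $\pi^\lambda$, one has $\chi_\lambda=\sum_\mu m_\mu\,u^\mu$ with nonnegative integer multiplicities $m_\mu$, and the theory of highest weights gives that the dominance-maximal monomial appearing is $u^\lambda$ with coefficient $m_\lambda=1$.

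First I would multiply $\chi_\lambda$ by the Vandermonde determinant $V:=\prod_{1\le i<j\le N}(u_i-u_j)=\det(u_i^{N-j})_{i,j}=\sum_{w\in S_N}\textup{sgn}(w)\,u^{w\rho}$, where $\rho=(N-1,N-2,\dots,1,0)$. Because $\chi_\lambda$ is symmetric and $V$ is alternating, the product $\chi_\lambda V$ is an \emph{alternating} Laurent polynomial, so it expands uniquely in the basis of alternants $A_\nu:=\det(u_i^{\nu_j})_{i,j}$ indexed by strictly decreasing exponent vectors $\nu_1>\dots>\nu_N$. Writing $\chi_\lambda V=\sum_\nu c_\nu A_\nu$, the coefficients $c_\nu$ are \emph{integers}, since both $\chi_\lambda$ and $V$ have integer coefficients. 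Tracking the dominance-maximal monomial shows that the top term is $u^{\lambda+\rho}$ with coefficient $1$: any monomial of $\chi_\lambda V$ has the form $u^{\mu+w\rho}$ with $\mu\preceq\lambda$ and $w\rho\preceq\rho$, and equality $\mu+w\rho=\lambda+\rho$ forces $\mu=\lambda$, $w=e$. Hence $c_{\lambda+\rho}=1$, and $\lambda+\rho=(\lambda_j+N-j)_j$ is indeed strictly decreasing.

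The key step is to show that in fact $c_\nu=0$ for all $\nu\neq\lambda+\rho$, i.e. $\chi_\lambda V=A_{\lambda+\rho}$ on the nose. For this I would invoke two orthogonality facts on the torus $T\cong(S^1)^N$ with normalized Haar measure. On one hand the monomials $u^\mu$ are orthonormal, so the alternants satisfy $\langle A_\nu,A_{\nu'}\rangle_{L^2(T)}=N!\,\delta_{\nu\nu'}$, each being a signed sum of $N!$ distinct monomials. On the other hand the Weyl integration formula
\[
\int_{U(N)} f(g)\,dg=\frac{1}{N!}\int_T f(t)\,|V(t)|^2\,dt ,
\]
combined with Schur orthogonality $\int_{U(N)}\chi_\lambda\overline{\chi_\lambda}\,dg=1$, yields $\langle \chi_\lambda V,\chi_\lambda V\rangle_{L^2(T)}=N!$. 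Hence $\sum_\nu c_\nu^2=1$ with all $c_\nu\in\mathbb Z$, which forces a single nonzero coefficient equal to $\pm1$; the leading-term computation identifies it as $c_{\lambda+\rho}=+1$. Therefore $\chi_\lambda V=A_{\lambda+\rho}=\det(u_i^{\lambda_j+N-j})_{i,j}$, and dividing by $V$ gives exactly $s_\lambda(u_1,\dots,u_N)$.

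The main obstacle is the Weyl integration formula, whose Jacobian factor $|V(t)|^2$ is the technical heart; it follows from the change of variables $(U(N)/T)\times T\to U(N)$, $(gT,t)\mapsto gtg^{-1}$, by computing the determinant of the differential along the root spaces. If one is willing to take Weyl integration and Schur orthogonality as known inputs, the remaining argument is the purely formal integrality-plus-norm-one computation above, and it is this combination of integer coefficients with a unit $L^2$-norm that does all the work.
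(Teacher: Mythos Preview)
Your argument is correct and is essentially the classical proof: multiply the character by the Weyl denominator, expand the resulting alternating polynomial in the alternant basis, and use Weyl integration together with Schur orthogonality to see that the expansion has $L^2$-norm $N!$, hence a single integer coefficient $\pm 1$, pinned down as $+1$ by the leading-term computation. The one place to be slightly more careful is the phrase ``$w\rho\preceq\rho$'': what you actually need (and what suffices) is that $u^{\lambda+\rho}$ is the lexicographically largest monomial of $\chi_\lambda V$, which follows since $u^\lambda$ is lex-maximal in $\chi_\lambda$ and $u^\rho$ is lex-maximal in $V$.

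As for comparison with the paper: there is nothing to compare. The paper does not prove this proposition; it is stated as a classical result and attributed to Weyl~\cite{W}. Your write-up supplies exactly the standard proof one would find in a representation-theory textbook, with the Weyl integration formula flagged as the one nontrivial analytic input.
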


Let $\mu^{(n)}$ and $\nu^{(n)}$ be two non-negative signatures of length
$n.$ Recall that Schur functions form a basis for the algebra of symmetric functions.
Define the coefficients
$\st(\mu^{(n)} \rightarrow \nu^{(n)})$ and $\pr(\nu^{(n)} \rightarrow \mu^{(n-1)})$ via

\begin{equation}  \label{s}
\frac{s_{\mu^{(n)}}(u_1,\dots,u_n)}{s_{\mu^{(n)}}(1^n)} \displaystyle \prod \limits^{n}_{i=1}\frac{(1+u_i)}{2}=\sum_{\nu^{(n)}\in\GT_n}\st(\mu^{(n)} \rightarrow \nu^{(n)}) \frac{s_{\nu^{(n)}}(u_1,\dots,u_n)}{s_{\nu^{(n)}}(1^n)},
\end{equation}

\begin{equation} \label{p}
\frac{s_{\nu^{(n)}}(u_1,\dots,u_{n-1},1)}{s_{\nu^{(n)}}(1^n)}=\sum_{\mu^{(n-1)}\in\GT_{n-1}}\pr(\nu^{(n)} \rightarrow \mu^{(n-1)}) \frac{s_{\mu^{(n-1)}}(u_1,\dots,u_{n-1})}{s_{\mu^{(n-1)}}(1^n)},
\end{equation}

where $1^n$ is a notation for a string $(1,1,\dots, 1)$ of length $n.$

  \begin{lem} \label{odin} The following equalities hold

\begin{equation} \label{st}
 \st(\mu^{(n)} \rightarrow
 \nu^{(n)})=\begin{cases}\frac{s_{\nu^{(n)}}(1^n)}{2^{n}s_{\mu^{(n)}}(1^n)}, & \mu^{(n)}\subset
   \nu^{(n)} ; \\
 0, & {otherwise},\end{cases}
\end{equation}

\begin{equation} \label{pr}
 \pr(\nu^{(n)} \rightarrow
 \mu^{(n-1)})=\begin{cases}\frac{s_{\mu^{(n)}}(1^n)}{s_{\nu^{(n)}}(1^n)}, & \mu^{(n-1)}\prec
   \nu^{(n)}; \\
 0, & {otherwise.}\end{cases}
\end{equation}

As a consequence,
  \begin{equation}
\sum\limits_{\nu^{(n)}\in \GT_n}\st(\mu^{(n)} \rightarrow \nu^{(n)})=1 \text{ and }
\sum\limits_{\mu^{(n-1)}\in\GT_{n-1}}\pr(\nu^{(n)} \rightarrow \mu^{(n-1)})=1.
  \end{equation}

  \end{lem}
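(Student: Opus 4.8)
The plan is to expand the left-hand sides of the defining relations (\ref{s}) and (\ref{p}) in the basis of Schur functions and to read off the coefficients $\st$ and $\pr$ by matching against the right-hand sides, using that the $s_\lambda$ are linearly independent.

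For (\ref{st}) I would start from the identity $\prod_{i=1}^n(1+u_i)=\sum_{k=0}^n e_k(u_1,\dots,u_n)$, where $e_k$ is the $k$-th elementary symmetric polynomial, together with the fact that $e_k=s_{(1^k)}$ is the Schur function of a single column. The dual Pieri rule (multiplication by $e_k$ adds a vertical $k$-strip) then gives
$$ s_{\mu^{(n)}}(u_1,\dots,u_n)\prod_{i=1}^n(1+u_i)=\sum_{\nu^{(n)}} s_{\nu^{(n)}}(u_1,\dots,u_n), $$
where the sum is over those $\nu^{(n)}$ for which $\nu^{(n)}/\mu^{(n)}$ is a vertical strip (in particular $\mu^{(n)}\subset\nu^{(n)}$). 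Dividing by $2^n s_{\mu^{(n)}}(1^n)$ and comparing the coefficient of $s_{\nu^{(n)}}/s_{\nu^{(n)}}(1^n)$ with the right-hand side of (\ref{s}) yields (\ref{st}).

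For (\ref{pr}) I would use the branching rule for Schur functions under the specialization of one variable to $1$,
$$ s_{\nu^{(n)}}(u_1,\dots,u_{n-1},1)=\sum_{\mu^{(n-1)}\prec\nu^{(n)}} s_{\mu^{(n-1)}}(u_1,\dots,u_{n-1}), $$
the sum being over signatures $\mu^{(n-1)}$ of length $n-1$ that interlace $\nu^{(n)}$ (equivalently $\nu^{(n)}/\mu^{(n-1)}$ is a horizontal strip). Dividing by $s_{\nu^{(n)}}(1^n)$ and rewriting each summand as $s_{\mu^{(n-1)}}(1^{n-1})$ times $s_{\mu^{(n-1)}}/s_{\mu^{(n-1)}}(1^{n-1})$, a comparison with (\ref{p}) gives (\ref{pr}).

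Finally, the two normalization identities follow by setting $u_1=\dots=u_n=1$ in the defining relations. In (\ref{s}) the prefactor $\prod_i(1+u_i)/2$ equals $1$ and all Schur ratios equal $1$, so the equation collapses to $\sum_{\nu^{(n)}}\st(\mu^{(n)}\to\nu^{(n)})=1$; in (\ref{p}) one uses $s_{\nu^{(n)}}(1^{n-1},1)=s_{\nu^{(n)}}(1^n)$ to obtain $\sum_{\mu^{(n-1)}}\pr(\nu^{(n)}\to\mu^{(n-1)})=1$. I do not anticipate a serious obstacle: the argument is an application of the Pieri and branching rules combined with linear independence of Schur functions. The only points requiring care are pinning down the precise combinatorial support (vertical strips for $\st$, interlacing horizontal strips for $\pr$) and correctly tracking the dimension factors $s_\lambda(1^n)$ through the normalizations.
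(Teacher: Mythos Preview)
Your proposal is correct and follows essentially the same approach as the paper: both use the expansion $\prod_i(1+u_i)=\sum_k e_k$ together with the Pieri rule for $e_k$ to handle $\st$, and the branching rule for Schur polynomials to handle $\pr$. Your derivation of the normalization identities by specializing $u_1=\dots=u_n=1$ directly in the defining relations is slightly slicker than the paper's route via $e_l(1^n)=\binom{n}{l}$ and the binomial identity, but the two are equivalent.
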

 \begin{proof}

Let us start with the first identity.
Let $e_l$ be the $l$-th elementary symmetric function.
Recall the Pieri's rule
$$e_l s_{\mu}=\sum\limits_{\mu\subset\lambda}s_{\lambda},$$
where the sum is taken over $\lambda$ obtained from $\mu$ by adding $l$ boxes,
no two in the same row.

 Note that
$$\displaystyle\prod\limits_{j=1}^{n}\frac{(1+u_j)}{2}=\frac{1}{2^{n}}\sum\limits_{j=1}^{n}e_{j}(u_1,\dots,u_{n}).$$

Therefore,
\begin{equation*}
 \st(\mu^{(n)} \rightarrow
 \nu^{(n)})=\begin{cases}\frac{s_{\nu^{(n)}}(1^n)}{2^{n}s_{\mu^{(n)}}(1^n)}, & \mu^{(n)}\subset
   \nu^{(n)} ; \\
 0, & {otherwise},\end{cases}
\end{equation*}
where $\nu^{(n)}$ is obtained from $ \mu^{(n)}$ by adding $l$ boxes such that $l\leq n.$

We can also compute

\begin{equation} \label{e}
s_{\mu^{(n)}}(1^n)e_l(1^n)=s_{\mu^{(n)}}(1^n) \binom{n}{l}=\sum\limits_{\mu^{(n)} \subset\lambda}s_{\lambda}(1^n),
\end{equation}
 where the sum is taken over $\lambda$ obtained from $\mu^{(n)}$ by adding $l$ boxes,
no two in the same row.

From (\ref{e}) we conclude
$$\sum\limits_{\nu^{(n)}\in \GT_n}\st(\mu^{(n)} \rightarrow \nu^{(n)})=\frac{1}{2^{n}}\sum\limits_{l=1}^{n} \binom{n} {l} =1.$$

The equation (\ref{p}) is a well-known branching rule for the Schur polynomials. Thus, we get

\begin{equation*}
 \pr(\nu^{(n)} \rightarrow
 \mu^{(n-1)})=\begin{cases}\frac{s_{\mu^{(n)}}(1^n)}{s_{\nu^{(n)}}(1^n)}, & \mu^{(n-1)}\prec
   \nu^{(n)}; \\
 0, & {otherwise.}\end{cases}
\end{equation*}

Moreover, we
have $$\textup{dim} (\pi^{\mu^{(n)}})= s_{\mu^{(n)}}(1^n)=\sum\limits_{\nu^{(n)}\prec
  \mu^{(n)}}\textup{dim}(\pi^{\nu^{(n)}})=\sum\limits_{\nu^{(n)}\prec
  \mu^{(n)}}s_{\nu^{(n)}}(1^n).$$
 \end{proof}

Let us fix a signature $\mu$ of length $N.$ Lemma \ref{odin} allows us
to define a {\it probability measure} on the sequences of signatures of the
form
$$\mathcal S^N=\{(\mu^{(N)}, \nu^{(N)},\dots,\mu^{(1)}, \nu^{(1)})\}$$ by the formula
\begin{multline} \label{eq:measure1}
\mathcal P^N_{\mu}\left ((\mu^{(N)}, \nu^{(N)},\dots,\mu^{(1)},
  \nu^{(1)})\right )=\\
=1_{\mu^{(N)}=\mu}\st(\mu^{(N)} \rightarrow \nu^{(N)}) \prod\limits_{j=1}^{N-1}
 \pr(\nu^{(N-j+1)} \rightarrow \mu^{(N-j)})\st(\mu^{(N-j)} \rightarrow \nu^{(N-j)}),
\end{multline}
where $\mu^{(i)},\nu^{(i)} \in \GT_i. $

This measure is similar to a Schur process; the only difference is that
we fix a boundary condition $\mu$ (the factor $1_{\mu^{(N)}}=\mu$ in
  the formula \eqref{eq:measure1})
which does not fit the framework of Schur processes (boundary
conditions have to be empty there). It is this difference that
significantly distinguishes asymptotic analysis of our model (as well
as models in \cite{DM}, \cite{P1}, \cite{P2}) with the case of Schur processes.  

\begin{prop} \label{uniform} Let $\mathcal R(\a, N,m)$ be a rectangular Aztec
  diamond. Let us denote the support of measure $\mathcal P^N_{\omega}$ by $\mathbb S_{\omega},$ where $\omega$ is the
  signature corresponding to the boundary row of $\mathcal R(\a, N,m).$
The set of domino tilings $\mathfrak D(\a, N,m)$ of $\mathcal R(\a, N,m)$ is in bijection with $\mathbb S_{\omega}.$ Moreover, the measure $\mathcal P^N_\omega$ is uniform on $\mathbb S_\omega$ and, therefore, on $\mathfrak D(\a, N,m).$
\end{prop}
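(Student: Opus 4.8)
The plan is to deduce the statement from the two tools already in hand: the combinatorial bijection of Theorem \ref{bij} and the explicit transition weights of Lemma \ref{odin}. There are two things to prove. First, that the support $\mathbb S_\omega$ coincides (under the identification of non-negative signatures with Young diagrams) with the set $\mathcal S(Y_\omega, N, m)$, so that Theorem \ref{bij} directly supplies the desired bijection with $\mathfrak D(\a, N, m)$. Second, that $\mathcal P^N_\omega$ assigns the same weight to every element of $\mathbb S_\omega$.

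For the first point I would unfold the definition of the support. A sequence $(\mu^{(N)}, \nu^{(N)}, \dots, \mu^{(1)}, \nu^{(1)})$ lies in $\mathbb S_\omega$ exactly when the product in \eqref{eq:measure1} is nonzero, i.e. when $\mu^{(N)} = \omega$ and every factor $\st(\mu^{(i)} \to \nu^{(i)})$ and $\pr(\nu^{(i)} \to \mu^{(i-1)})$ is positive. By \eqref{st} positivity of the $\st$-factor is equivalent to $\mu^{(i)} \subset \nu^{(i)}$ with $\nu^{(i)}/\mu^{(i)}$ a vertical strip of length at most $i$, and by \eqref{pr} positivity of the $\pr$-factor is equivalent to the interlacing $\mu^{(i-1)} \prec \nu^{(i)}$; the length conditions $\ell(\mu^{(i)}) = \ell(\nu^{(i)}) = i$ are automatic because the signatures are taken in $\GT_i$. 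These are precisely the constraints of Definition \ref{prop}, with one exception: the rectangle containments $Y_{\mu^{(i)}} \subset Y_{i\times(m+N-i)}$ and $Y_{\nu^{(i)}} \subset Y_{i\times(m+N-i+1)}$. I would recover these by a downward induction on $i$, starting from $\omega_1 = \a_N - N = m$ and using that a vertical strip raises the largest part by at most one, while interlacing does not raise it at all; this propagates the bound $\mu^{(i)}_1 \le m+N-i$ (and the companion bound for $\nu^{(i)}$) all the way down. The same transitions also keep every signature non-negative, since interlacing forces $\mu^{(i-1)}_{i-1} \ge \nu^{(i)}_i \ge 0$. Hence $\mathbb S_\omega = \mathcal S(Y_\omega, N, m)$, and Theorem \ref{bij} identifies $\mathfrak D(\a, N, m)$ with $\mathbb S_\omega$.

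For the second point I would substitute the closed forms \eqref{st} and \eqref{pr} into \eqref{eq:measure1} and watch the dimensions telescope. Each $\st$-factor contributes $s_{\nu^{(i)}}(1^i)/\bigl(2^i\, s_{\mu^{(i)}}(1^i)\bigr)$ and each $\pr$-factor contributes the reciprocal-type ratio $s_{\mu^{(i-1)}}(1^{i-1})/s_{\nu^{(i)}}(1^i)$ of \eqref{pr}. All the $s_{\mu^{(i)}}$-terms cancel against the numerators of the neighbouring $\pr$-factors except $s_{\mu^{(N)}}(1^N)$, the $s_{\nu^{(i)}}$-terms collapse telescopically except $s_{\nu^{(1)}}(1^1)$, and the powers of two accumulate to $2^{-\sum_{i=1}^N i} = 2^{-N(N+1)/2}$. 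Since $\nu^{(1)}$ has length one we have $s_{\nu^{(1)}}(1^1) = 1$, and since $\mu^{(N)} = \omega$ is fixed the factor $s_\omega(1^N)$ is a constant, so
\begin{equation*}
\mathcal P^N_\omega\bigl((\mu^{(N)}, \nu^{(N)}, \dots, \mu^{(1)}, \nu^{(1)})\bigr) = \frac{1}{2^{N(N+1)/2}\, s_\omega(1^N)}
\end{equation*}
for every sequence in the support. Thus $\mathcal P^N_\omega$ is uniform on $\mathbb S_\omega$, and transporting it through the bijection of Theorem \ref{bij} yields the uniform measure on $\mathfrak D(\a, N, m)$.

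I expect the only genuine friction to be in the first part, namely verifying that positivity of the transition weights really forces the rectangle containments of Definition \ref{prop}, and not merely the strip and interlacing relations; this is the bookkeeping induction sketched above, anchored at $\omega_1 = m$. The telescoping in the second part is purely formal once the weights of Lemma \ref{odin} are in place, the one point worth stating explicitly being the boundary evaluation $s_{\nu^{(1)}}(1^1) = 1$, which is what makes the surviving numerator a constant.
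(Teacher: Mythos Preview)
Your proof is correct and follows essentially the same approach as the paper: substitute the explicit weights of Lemma~\ref{odin} into \eqref{eq:measure1}, telescope, and invoke Theorem~\ref{bij}. You are in fact more careful than the paper on one point---the paper simply asserts that the support of $\mathcal P^N_\omega$ equals $\mathcal S(Y_\omega,N,m)$, whereas you explicitly recover the rectangle containment bounds by the downward induction anchored at $\omega_1=m$; this is a genuine (if small) gap that you have filled.
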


\begin{proof}
From (\ref{pr}) and (\ref{st}) for any $(\mu^{(N)}, \nu^{(N)},\dots,\mu^{(1)}, \nu^{(1)})$ we have
\begin{align}
&\mathcal P^N_{\omega}\left((\mu^{(N)}, \nu^{(N)},\dots,\mu^{(1)},
  \nu^{(1)})\right )=\notag \\
&=\frac{1}{2^N}\frac{s_{\nu^{(1)}}(1^{N})}{s_{\mu^{(1)}}(1^{N})}\frac{s_{\mu^{(2)}}(1^{N-1})}{s_{\nu^{(1)}}(1^{N})}
\frac{1}{2^{N-1}}\frac{s_{\nu^{(2)}}(1^{N-1})}{s_{\mu^{(2)}}(1^{N-1})}
\cdots \frac{1}{2} \frac{s^{\nu^{(N)}}(1)}{s_{\mu^{(N)}}(1)}=\notag \\
&=\frac{1}{2^{N(N+1)/2}s_{\omega}(1^{N})},   \notag
\end{align}
when $(\mu^{(N)}, \nu^{(N)},\dots,\mu^{(1)}, \nu^{(1)})\in \mathcal S(Y_{\omega}, N, m)$
and zero otherwise. Recall that by Theorem \ref{bij} the set
$\mathcal S(Y_{\omega}, N, m)$ is in bijection with the set of domino tilings $\mathfrak D(\a, N,m).$

Therefore, $\mathcal P^N_{\omega}$ defines the uniform measure on $\mathfrak D(\a, N,m).$

\end{proof}

\begin{cor}\label{partion}
The number of domino tilings of $\mathcal R(\a,N,m)$ is equal to
\begin{equation}
|\mathfrak D(\a,N,m)|=2^{N(N+1)/2}s_{\omega}(1^{N}).
\end{equation}
\end{cor}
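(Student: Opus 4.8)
The plan is to read off the count directly from the probability computation already carried out in the proof of Proposition \ref{uniform}. The essential input is there: for every sequence $(\mu^{(N)}, \nu^{(N)}, \dots, \mu^{(1)}, \nu^{(1)})$ lying in the support $\mathbb S_\omega$, the telescoping product of the $\st$ and $\pr$ weights collapses to the single value $\frac{1}{2^{N(N+1)/2} s_\omega(1^N)}$, which does not depend on the particular sequence. In other words $\mathcal P^N_\omega$ is a \emph{uniform} measure: every atom of its support carries exactly the same mass $w = \frac{1}{2^{N(N+1)/2} s_\omega(1^N)}$.

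Next I would verify that $\mathcal P^N_\omega$ is genuinely a probability measure, i.e. that its total mass equals $1$. This follows from the two normalization identities recorded in Lemma \ref{odin}, namely $\sum_{\nu^{(n)}} \st(\mu^{(n)} \to \nu^{(n)}) = 1$ and $\sum_{\mu^{(n-1)}} \pr(\nu^{(n)} \to \mu^{(n-1)}) = 1$. Summing the defining formula \eqref{eq:measure1} over all sequences one layer at a time — first over $\nu^{(1)}$, then over $\mu^{(1)}$, and so on up to $\mu^{(N)}$, where the indicator $1_{\mu^{(N)} = \omega}$ pins the top signature to $\omega$ — each summation contributes a factor of $1$, so the total mass is indeed $1$.

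Combining the two observations finishes the argument by a pure bookkeeping step: if each of the $|\mathbb S_\omega|$ atoms has mass $w$ and these masses sum to $1$, then $|\mathbb S_\omega| = 1/w = 2^{N(N+1)/2} s_\omega(1^N)$. Finally, by Proposition \ref{uniform} (which itself rests on the bijection of Theorem \ref{bij}) the support $\mathbb S_\omega$ is in bijection with $\mathfrak D(\a, N, m)$, whence $|\mathfrak D(\a, N, m)| = |\mathbb S_\omega| = 2^{N(N+1)/2} s_\omega(1^N)$, as claimed.

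I do not anticipate a genuine obstacle: the statement is a counting consequence of the fact that a uniform measure on a finite set has atoms of equal weight summing to one, so the cardinality is the reciprocal of the common atomic weight. The only point meriting a little care is the claim that $\mathcal P^N_\omega$ sums to $1$, but this is immediate from the layer-by-layer summation using Lemma \ref{odin}; everything else is the formal inversion of the common weight together with the bijection already established.
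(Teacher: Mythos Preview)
Your proposal is correct and follows exactly the argument the paper intends: the paper states this as an immediate corollary of Proposition~\ref{uniform} without giving a separate proof, and you have simply spelled out the natural deduction---each atom of $\mathbb S_\omega$ has mass $\frac{1}{2^{N(N+1)/2} s_\omega(1^N)}$, the total mass is $1$ by Lemma~\ref{odin}, so the cardinality is the reciprocal, and the bijection of Theorem~\ref{bij} transfers this to $\mathfrak D(\a,N,m)$.
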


This formula was obtained in \cite{C2} based on the results from
\cite{C1} and \cite{M}, see also \cite{HG}. There are many ways to
prove the corresponding formula in the case of the Aztec diamond, see \cite{EKLP}. 

\subsection{Non-intersecting line ensembles}
\label{sec:paths}
Let us describe in more detail another combinatorial interpretation of
our tiling model, which was discussed in the introduction.

One can imagine superimposing a rectangular Aztec diamond upon a checkerboard coloring and obtaining four types of dominos, as the black unit square might be either to the right/left (resp. top/bottom) unit square of a horizontal
(resp. vertical) domino. In this way to each tiling one can bijectively associate a set of non-intersecting
lines, as shown in Figure \ref{fig:lines}. This construction first appeared in \cite{J}, we use a slightly different but
equivalent representation from \cite{BF}.

 Next, one can think of a rectangular Aztec diamond as being embedded
into tilings of $\mathbb R^2$, where outside the domain we add only non-overlapping horizontal
dominos and fill the whole space with them. Then, by means of a simple transformation one obtains a bijection between a tiling of a rectangular
Aztec diamond $\mathcal R (N, \Omega, m)$ and a configuration of non-intersecting lines on
a Lindstr{\"o}m-Gessel-Viennot (LGV) directed graph (see \cite{FS} and
\cite{St}), built out of
$N+m$ basic blocks, see Figure
\ref{fig:LGV}. Note that $\Omega$ determines the boundary conditions
along the bottoms of the basic blocks as shown in Figure \ref{fig:LGV}. By construction from a yellow vertex a line can go up-right or to
the right, while from a red vertex the line goes to the right or
down-right.

\begin{figure}[h]
\centering
\begin{subfigure}{.5\textwidth}
  \centering
 \includegraphics[width=0.9 \linewidth]{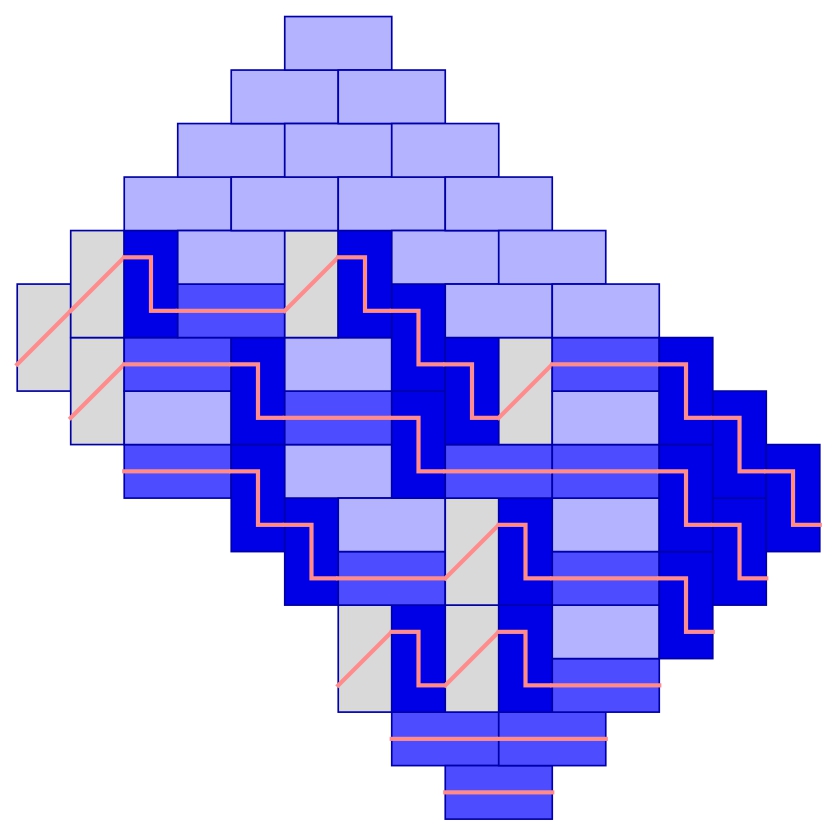}
 \label{fig:sub1}
\end{subfigure}%
\begin{subfigure}{.5\textwidth}
  \centering
\includegraphics[width=0.9 \linewidth]{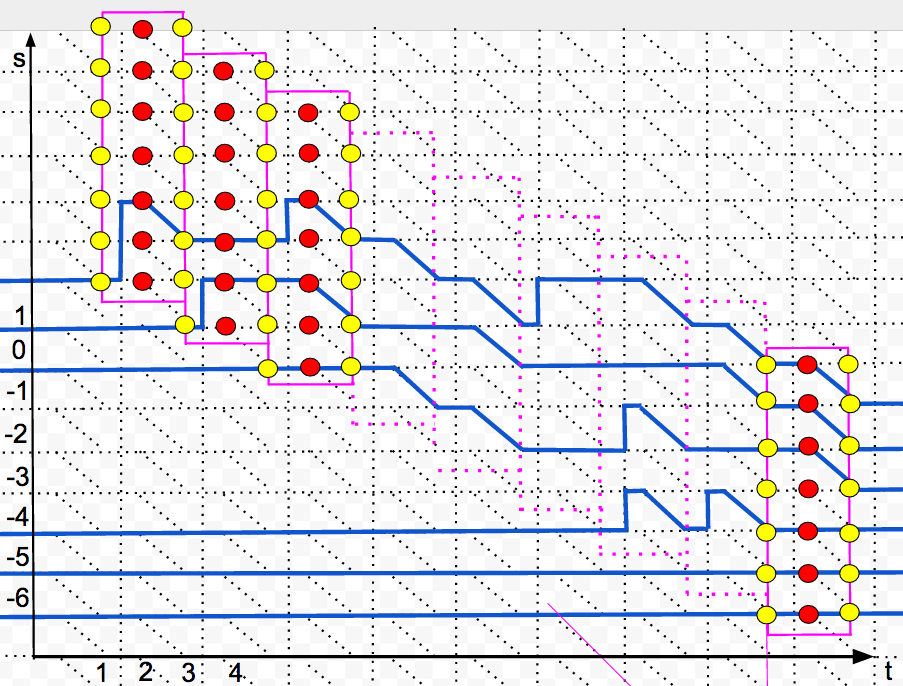}
  \label{fig:sub2}
\end{subfigure}
\caption{Rectangular Aztec diamond $\mathcal R(6, (1,2,3,7,8,9),3)$
  and the corresponding ensemble of non-intersecting paths. The height
of a block is $N+1$ units.}
\label{fig:LGV}
\end{figure}

Introduce the coordinate system as in
the right picture in Figure \ref{fig:LGV}. Let us associate to every
vertical section $t=i,$ where $i=1, \dots 2N+1,$ a signature. Consider the set of nodes
on a section. Suppose the nodes that belong to
the lines from the ensemble have coordinates $s_1>s_2>\dots>s_i.$ Then
the corresponding signature is $(s_1+1\geq s_2+2 \geq \dots\geq s_i+i).$ In this way to every
non-intersecting line ensemble one associates a sequence of signatures
$( \gamma_1, \rho_1,\dots, \gamma_N, \rho_N, \gamma_{N+1}).$ Note that $\gamma_{N+1}=(0\geq 0\geq
\dots \geq 0).$

Consider the following probability model on the set of the non-intersecting line
ensembles. Let us assign weight $1$ to each horizontal edge, weight
$\alpha=1$ to each vertical edge and $\beta=\frac{1}{2}$ to each
down-right edge. Consider the $i$-th basic
block, let $(\gamma_i, \rho_{i}, \gamma_{i+1})$ be a triple of signatures
corresponding to its left side, middle section and its right side.
Let us assign to this block the following weight
$$w(\text{block})=s_{\rho_{i}/\gamma_{i+1}}(\alpha)\cdot
s_{\rho_{i}/\gamma_i}(\hat {\b}),$$
where  $s_{\rho/ \gamma}$ is a skew Schur function.
Here $\alpha$ stands for the specialization of the symmetric functions into a single nonzero
variable equal to $\alpha$ (with complete homogeneous symmetric functions
specializing into $h_n(\alpha) = \alpha^n, n\geq 0)$, and $\hat {\b}$ stands for the specialization into
a single ``dual'' variable equal to $\b$ (with complete homogeneous symmetric
functions specializing into $h_n(\hat {\b}) = 1$ if $n = 0,$ $h_n(\hat {\b}) = \b$ if $n = 1$, and
$h_n(\hat {\b}) = 0$ if $n \geq 2)$. The quantities $w(\text{block})$ are essentially indicators
times a power of a parameter.

Define the probability $\mathbb P_L$ of an
non-intersecting line ensemble satisfying our combinatorial and
boundary conditions to be the product of the corresponding weights of
the blocks.

\begin{prop}
Measure $\mathbb P_L$ is a uniform probability measure on the set  $L(N, \Omega,
m)$ of all
non-intersecting line ensembles satisfying our combinatorial and boundary
conditions.
\end{prop}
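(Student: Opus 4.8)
The plan is to prove the two assertions hidden in the word ``uniform'': that the product $\prod_i w(\text{block}_i)$ vanishes off the set of admissible ensembles, and that on this set it takes one and the same value for every configuration. Equal weight on the finite set $L(N,\Omega,m)$ is exactly uniformity, after which the normalization can be read off from the enumeration in Corollary \ref{partion}. So the first task is to make the block weights explicit, the second is to prove configuration-independence of their product, and the last is bookkeeping.

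First I would evaluate the single-variable specializations that enter $w(\text{block}_i)=s_{\rho_i/\gamma_{i+1}}(\alpha)\cdot s_{\rho_i/\gamma_i}(\hat\beta)$. Since $\alpha$ is an ordinary single variable and $\hat\beta$ its dual, the combinatorial (tableau) definition of skew Schur functions gives $s_{\rho_i/\gamma_{i+1}}(\alpha)=\alpha^{|\rho_i|-|\gamma_{i+1}|}$ when $\rho_i/\gamma_{i+1}$ is a horizontal strip and $0$ otherwise, while $s_{\rho_i/\gamma_i}(\hat\beta)=\beta^{|\rho_i|-|\gamma_i|}$ when $\rho_i/\gamma_i$ is a vertical strip and $0$ otherwise; the dual variable produces vertical strips via the fundamental involution $s_\lambda\leftrightarrow s_{\lambda'}$, exactly the mechanism behind the Pieri computations in Lemma \ref{odin}. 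Substituting $\alpha=1$ and $\beta=\tfrac12$ collapses $w(\text{block}_i)$ to the indicator that $\gamma_{i+1}\prec\rho_i$ and that $\gamma_i\subset\rho_i$ is a vertical strip, times $(\tfrac12)^{|\rho_i|-|\gamma_i|}$. These nonvanishing conditions are precisely the combinatorial and boundary conditions defining admissible line ensembles, so the support of $\mathbb P_L$ is $L(N,\Omega,m)$ and, on it, $\mathbb P_L(\text{ensemble})=(\tfrac12)^{\sum_i(|\rho_i|-|\gamma_i|)}$, that is, $\tfrac12$ raised to the total number of down-right edges.

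It then remains to show that $\sum_i(|\rho_i|-|\gamma_i|)$, the number of down-right edges, is the same for every admissible ensemble, and this is the step I expect to be the main obstacle. A priori a line could trade an up-right/down-right pair for a longer horizontal run and change the count, so the result is not formal. The resolution is combinatorial and parallels the proof of Lemma \ref{good}: every line has its left endpoint prescribed by $\Omega$ along the bottoms of the blocks and its right endpoint prescribed by $\gamma_{N+1}=(0,\dots,0)$, and the vertex rules (up-right or right from a yellow vertex, right or down-right from a red vertex) together with the non-intersecting condition pin the number of down-right steps to a quantity determined by the domain $\mathcal R(N,\Omega,m)$ alone, exactly as the number of $V$-squares in each row is forced there. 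I would make this precise by transporting the already-established uniform measure $\mathcal P^N_\omega$ of Proposition \ref{uniform} through the bijection of Theorem \ref{bij} and the line-ensemble correspondence, matching the two sequences of signatures; constancy of the exponent is then equivalent to the bare block product being proportional to $\mathcal P^N_\omega$ with a configuration-independent ratio, which the telescoping of dimension factors in the proof of Proposition \ref{uniform} supplies.

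Finally, once the weight is seen to be a single constant $(\tfrac12)^{D}$ on the $|L(N,\Omega,m)|$ admissible ensembles, $\mathbb P_L$ is manifestly uniform, and normalizing by the total mass---equivalently invoking $|\mathfrak D(N,\Omega,m)|=2^{N(N+1)/2}s_\omega(1^N)$ from Corollary \ref{partion}---identifies it as the uniform probability measure on $L(N,\Omega,m)$.
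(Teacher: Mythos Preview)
The paper does not actually prove this proposition. Immediately after stating it the authors write: ``We do not go into details of the proof of this proposition since we will not need it further, see the recent exposition in \cite{BCV}.'' So there is no in-paper argument to compare against; the statement is included for context and the reader is referred to the literature.

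Assessed on its own, your outline correctly evaluates the single-variable and dual specializations, so that on admissible ensembles the bare block product equals $(1/2)^{D}$ with $D=\sum_i(|\rho_i|-|\gamma_i|)$, the total number of down-right edges. The gap is the step where you claim $D$ is configuration-independent. Your proposed justification---transport $\mathcal P^N_\omega$ through the bijections and invoke the telescoping in Proposition~\ref{uniform}---does not close it. That telescoping shows a \emph{different} weight system (products of normalized dimension ratios $s_{\nu}(1^n)\big/\bigl(2^{n}s_{\mu}(1^n)\bigr)$ and branching factors) collapses to a constant; it says nothing about the skew-Schur block product. Asserting that the block product is proportional to $\mathcal P^N_\omega$ with configuration-independent ratio is exactly the statement that $(1/2)^{D}$ is constant, so as written the argument assumes what it is trying to prove.

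It is also worth noting that the endpoint constraints you invoke cannot by themselves force $D$ to be constant: for a single line they fix only the difference $(\#\text{up-right})-(\#\text{down-right})$ and the total number of steps, leaving one degree of freedom (within a block the pair ``up-right then down-right'' and the pair ``right then right'' have the same net effect on position but different contributions to $D$). So a genuine combinatorial input---either an explicit identification of $D$ with a domain invariant, or the argument carried out in \cite{BCV} that the authors cite---is still required at this point. Observe, incidentally, that if $D$ were constant then $(1/2)^{D}\cdot|\mathfrak D(N,\Omega,m)|=1$ would force $s_\omega(1^N)$ to be a power of $2$ via Corollary~\ref{partion}, which fails in general; this is a signal that the precise LGV encoding (with lines entering along the bottoms of blocks and signatures of varying length) is doing more work than your sketch accounts for.
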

 We do not go into details of the proof of this proposition since
we will not need it further, see the recent exposition in \cite{BCV}.

We conclude that the uniform measure on the set of tilings $\mathfrak D (N,
\Omega, m)$ corresponds under the above bijection to a measure
$\mathbb P_L$ on the ensembles of non-intersecting lines.

There are many other interesting tiling models that have an interpretation as
ensembles of non-intersecting lines. For example, random
tilings of a tower, discussed in \cite{BF}. More generally, the random dimer
model on a rail yard graph \cite{BBCCR} fits into the framework of the
Schur generating functions.  We believe that our
approach can be used to analyze the global behavior
for these models with arbitrary boundary conditions.

\section{Law of Large numbers}
\label{sec:3}

The main technique we use in the paper is the moment method introduced by Bufetov
and Gorin in \cite{BG} and \cite{BG2}.
In this section we first present some background to give an overview
of the method and then show that the
model of random domino tilings of rectangular Aztec diamonds fits into
this framework. Subsequently, we prove the Law of Large numbers for the
random height function.

\subsection{LLN for the moments}

One way to encode a signature $\lambda\in \GT_N$ is through the
\emph{counting measure} $m[\lambda]$ on $\mathbb R$
corresponding to it via
\begin{equation}
\label{eq_def_uniform_intro}
m[\lambda]=\frac{1}{N}\sum_{i=1}^N
 \delta\left(\frac{\lambda_i+N-i}N\right),
\end{equation}
where $\delta$ is a delta measure.
Note that we incorporate the scaling into this formula.

Let $\rho$ be a probability measure on the set of all signatures $\GT_N.$ The pushforward of $\rho$ with respect to the map
$\lambda\rightarrow m[\lambda]$ defines a \textit{random} probability
measure on $\mathbb R$ that we denote $m[\rho].$

\begin{defi} Let $\rho$ be a probability measure on $\GT_N.$ The
  Schur generating function $\mathcal S^{U(N)}_{\rho}(u_1,\dots, u_N)$ is a symmetric Laurent power series in $(u_1,\dots,u_N)$ given by
\begin{equation}
\mathcal S^{U(N)}_{\rho}(u_1,\dots, u_N)=\sum \limits_{\lambda \in \GT(N)} \rho(\lambda)\frac{s_{\lambda}(u_1,\dots,u_N)}{s_{\lambda}(1,\dots,1)}.
\end{equation}
\end{defi}

We will need the following two results from \cite{BG}.

\begin{thm}
\label{theorem_moment_convergence} $($\cite{BG}, Theorem 5.1 $)$
Let $\rho(N)$ be a sequence of measures such that for each $N=1,2,\dots$, $\rho(N)$ is a
probability measure on $\GT_N$. Suppose that $\rho(N)$ is such that for every $j$
$$
 \lim_{N\to\infty} \frac{1}{N} \ln( \mathcal S^{U(N)}_{\rho(N)}(u_1,\dots,u_j, 1^{N-j}) ) = Q(u_1)+\dots+Q(u_j),
$$
where $Q$ is an analytic function in a neighborhood of $1$ and the convergence is uniform in an
open $($complex$)$ neighborhood of $(1,\dots,1)$. Then random measures $m[\rho(N)]$ converge as
$N\to\infty$ in probability, in the sense of moments to a \emph{deterministic} measure $\mes$ on
$\mathbb R$, whose moments are given  by
\begin{equation}
\label{eq_limit_moments_A}
 \int_{\mathbb R} x^j \mes(dx)= \sum_{\ell=0}^j \frac{j!}{\ell! (\ell+1)! (j-\ell)!} \frac{\partial^\ell}{\partial u^\ell}\left(u^j
 Q'(u)^{j-l}\right)\Biggr|_{u=1}.
\end{equation}
\end{thm}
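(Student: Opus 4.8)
The plan is to realize the moments of the random counting measure $m[\rho(N)]$ as eigenvalue sums of a commuting family of differential operators that diagonalize the Schur functions, and then to extract their asymptotics directly from the hypothesis on the Schur generating function. Writing $V(u)=\prod_{i<j}(u_i-u_j)$ for the Vandermonde, I would introduce the operators
$$\mathcal{D}_k=\frac{1}{V(u)}\left(\sum_{i=1}^N\left(u_i\frac{\pa}{\pa u_i}\right)^k\right)V(u),$$
and observe, directly from the bialternant formula for $s_\la$, that $\mathcal{D}_k\,s_\la=\big(\sum_{i=1}^N(\la_i+N-i)^k\big)s_\la$, since $u_i\pa_{u_i}$ acts on $u_i^{\la_j+N-j}$ by the scalar $\la_j+N-j$ and the permutation sum reindexes away. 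Because $\mathcal{S}^{U(N)}_{\rho}(u)=\sum_\la\rho(\la)\,s_\la(u)/s_\la(1^N)$ and $\mathcal{D}_k$ preserves the space of symmetric polynomials, applying it and setting $u=(1,\dots,1)$ gives
$$\mathcal{D}_k\,\mathcal{S}^{U(N)}_{\rho}\big|_{u=1^N}=\E\Big[\sum_{i=1}^N(\la_i+N-i)^k\Big]=N^{k+1}\,\E\Big[\int_{\mathbb R}x^k\,m[\rho](dx)\Big],$$
the last equality being the definition \eqref{eq_def_uniform_intro} of $m[\la]$. Thus computing the limiting expected moments reduces to the asymptotics of $N^{-(k+1)}\mathcal{D}_k\,\mathcal{S}^{U(N)}_{\rho(N)}$ at $1^N$.

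The heart of the argument is this asymptotic computation. By hypothesis, on a neighborhood of $1^N$ the generating function has the factorized exponential form $\mathcal{S}^{U(N)}_{\rho(N)}(u_1,\dots,u_k,1^{N-k})=\exp\!\big(N(Q(u_1)+\dots+Q(u_k))+o(N)\big)$, with the error uniform on a complex neighborhood, so that all partial derivatives in $u_1,\dots,u_k$ up to any fixed order converge by Cauchy estimates. I would then establish a combinatorial lemma describing the leading-order action of $\mathcal{D}_k$ on $\exp\!\big(N\sum_iQ(u_i)\big)$ near $u=1^N$: each application of $u_i\pa_{u_i}$ to the exponent contributes a factor of order $N$ through $N\,u_iQ'(u_i)$, while the conjugation by $V$ introduces the Vandermonde poles $1/(u_i-u_j)$, whose cancellation against the zeros produced by antisymmetrization must be tracked through a Taylor expansion at $u=1$. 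Collecting the surviving terms of order $N^{k+1}$ yields precisely
$$\int_{\mathbb R}x^k\,\mes(dx)=\sum_{\ell=0}^k\frac{k!}{\ell!\,(\ell+1)!\,(k-\ell)!}\frac{\pa^\ell}{\pa u^\ell}\Big(u^k\,Q'(u)^{k-\ell}\Big)\Big|_{u=1},$$
which is \eqref{eq_limit_moments_A}.

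To upgrade convergence of expectations to convergence in probability, I would prove that the random moments concentrate, using the same operator calculus. Since $\mathcal{D}_{k_1}\mathcal{D}_{k_2}s_\la=\big(\sum_i(\la_i+N-i)^{k_1}\big)\big(\sum_i(\la_i+N-i)^{k_2}\big)s_\la$, the composition evaluated at $1^N$ computes the joint moment $\E\big[(\sum_i(\la_i+N-i)^{k_1})(\sum_i(\la_i+N-i)^{k_2})\big]$. The key point is that applying a second operator to the exponential form produces a leading term that factorizes across the two groups of variables, so that
$$\mathcal{D}_{k_1}\mathcal{D}_{k_2}\,\mathcal{S}^{U(N)}_{\rho(N)}\big|_{1^N}=\big(\mathcal{D}_{k_1}\mathcal{S}^{U(N)}_{\rho(N)}\big|_{1^N}\big)\big(\mathcal{D}_{k_2}\mathcal{S}^{U(N)}_{\rho(N)}\big|_{1^N}\big)\big(1+o(1)\big)$$
at the relevant order $N^{k_1+k_2+2}$. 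Dividing by $N^{k_1+k_2+2}$ shows $\Cov\big(\int x^{k_1}m[\rho],\int x^{k_2}m[\rho]\big)\to0$; in particular every variance tends to zero, which together with the convergence of means gives convergence in probability in the sense of moments to the deterministic $\mes$.

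The main obstacle is the asymptotic lemma of the second paragraph: isolating the coefficient of $N^{k+1}$ in $\mathcal{D}_k\exp\!\big(N\sum_iQ(u_i)\big)$ requires simultaneously controlling the cancellation of the poles $1/(u_i-u_j)$ as $u\to1^N$ and the competition between derivatives that fall on the exponent (each worth a factor $N$) and those that fall on the polynomial prefactors produced by the Vandermonde. Organizing this bookkeeping so that it collapses to the explicit sum with weights $k!/\big(\ell!\,(\ell+1)!\,(k-\ell)!\big)$ is the delicate step, and the uniform analyticity hypothesis on $Q$ is exactly what licenses the Cauchy estimates and the interchange of limit and differentiation used throughout.
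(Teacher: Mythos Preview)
The paper does not prove this theorem at all: it is quoted verbatim as Theorem~5.1 of \cite{BG} and used as a black box, so there is no ``paper's own proof'' to compare against. Your sketch is in fact the strategy used in \cite{BG}: realize $\sum_i(\la_i+N-i)^k$ as the eigenvalue of the Vandermonde-conjugated operator $\mathcal D_k$ acting on $s_\la$, apply it to the Schur generating function and evaluate at $1^N$ to get expected moments, then compute the leading $N^{k+1}$ contribution from the exponential form of the generating function and use a second application of the operators to show the covariance is $o(N^{k_1+k_2+2})$, forcing concentration.

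As a sketch this is correct in outline, but be aware that the step you flag as ``the main obstacle'' is genuinely the whole proof: extracting the explicit weights $k!/(\ell!\,(\ell+1)!\,(k-\ell)!)$ from the cancellation between the Vandermonde poles and the symmetrization is not just bookkeeping. In \cite{BG} this is handled by first rewriting $\mathcal D_k$ in terms of the single-variable operators and then proving a separate combinatorial identity (their Lemma~5.5) that identifies the surviving terms; your description ``Taylor expansion at $u=1$ and collect surviving terms'' hides a nontrivial inductive argument. Likewise, the factorization $\mathcal D_{k_1}\mathcal D_{k_2}\,\mathcal S\big|_{1^N}=(\mathcal D_{k_1}\mathcal S)(\mathcal D_{k_2}\mathcal S)(1+o(1))$ does not follow automatically from the first computation --- one must check that the cross-terms, where derivatives from the two operators hit the same variable or interact through the Vandermonde, are genuinely of lower order, and this requires the same kind of pole-cancellation analysis as the first step. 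So your proposal is the right architecture, but a complete write-up would need to supply both of these lemmas rather than assert them.
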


Let us introduce some notation.
Let $\eta$ be a compactly supported measure on $\mathbb
  R.$  Let  $M_k(\mes)=\int_{\mathbb R} x^k \mes(dx)$ be its $k$-th
  moment. Set
\begin{equation}\label{eq:generating}
S_\eta(z)=z+M_1(\mes)z^2+z^3 M_2(\mes) +\dots
\end{equation}
 to be the generating function of the moments of $\eta.$
Define $S_\eta^{(-1)}(z)$
 to be the inverse series to $S_\eta(z)$, that is $ S_\eta^{(-1)}\bigl( S_\eta(z)\bigr)=S_\eta\bigl(S_\eta^{(-1)}(z)\bigr)=z.$

Let
\begin{equation}
\label{eq_Voiculescu_R}
 R_\eta(z)= \frac{1}{S_\eta^{(-1)}(z)} -\frac{1}{z}
\end{equation}
be the \textit{Voiculescu $R$--transform} of measure $\eta.$

Define the function $\H_\eta(u)$ as
\begin{equation}
\label{eq_H_function} \H_\eta(u)=\int_0^{\ln(u)}R_\eta(t)dt+\ln\left(\frac{\ln(u)}{u-1}\right),
\end{equation}
which should be understood as a power series in $(u-1)$.

Note that we have the following expression for its derivative:
\begin{equation}\label{eq_H_derivative}
\H'_{\eta} (u) = \frac{1}{u S_{\eta}^{(-1)} (\log u)} - \frac{1}{u -1}.
\end{equation}

 \begin{lem} If $\eta$ is a measure with compact support, then $\H_\eta(u)$ as a power series in $(u-1)$ is uniformly convergent in an open
neighborhood of $u=1$.
\end{lem}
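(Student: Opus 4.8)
The plan is to prove the stronger statement that $\H_\eta$ extends to an \emph{analytic} function on some open neighborhood of $u=1$; once this is known, analyticity guarantees that its Taylor expansion in $(u-1)$ converges to it uniformly on any closed disk contained in that neighborhood, which is exactly the assertion. The first step is to exploit the compact support of $\eta$: if $\mathrm{supp}(\eta)\subset[-A,A]$ then $|M_k(\eta)|\le A^k$, so by \eqref{eq:generating} the series $S_\eta(z)=z+\sum_{k\ge2}M_{k-1}(\eta)z^{k}$ converges and defines an analytic function on the disk $|z|<1/A$, with $S_\eta(0)=0$ and $S_\eta'(0)=1$.

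Next I would feed this into the holomorphic inverse function theorem to obtain $S_\eta^{(-1)}$, analytic on some disk $|z|<r$ with $S_\eta^{(-1)}(0)=0$ and derivative $1$ at the origin. Writing $S_\eta^{(-1)}(z)=z\,g(z)$ with $g$ analytic and $g(0)=1$, and shrinking $r$ so that $g$ does not vanish, the crucial observation is that the apparent pole of the Voiculescu transform \eqref{eq_Voiculescu_R} cancels:
$$R_\eta(z)=\frac{1}{z\,g(z)}-\frac1z=\frac{1-g(z)}{z\,g(z)},$$
and since $1-g(z)=O(z)$ the singularity at $0$ is removable, so $R_\eta$ is analytic on $|z|<r$. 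I expect this to be the one genuinely delicate point: checking that $R_\eta$ is an honest analytic function at the origin (equivalently, that $S_\eta$ is locally invertible and the pole cancels) rather than a merely formal Laurent series. This is also precisely where the compact-support hypothesis is used.

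With $R_\eta$ analytic near $0$, its primitive $F(w)=\int_0^w R_\eta(t)\,dt$ is analytic near $w=0$ with $F(0)=0$. Since $u\mapsto\ln u$ is analytic near $u=1$ with $\ln 1=0$, I would choose a neighborhood $U$ of $1$ on which $|\ln u|<r$; then the first summand $F(\ln u)$ of \eqref{eq_H_function} is analytic on $U$ by composition. For the second summand, note that
$$\frac{\ln u}{u-1}=1-\frac{u-1}{2}+\frac{(u-1)^2}{3}-\cdots$$
extends analytically across $u=1$ with value $1\ne0$; hence, after possibly shrinking $U$, its logarithm $\ln\!\big(\tfrac{\ln u}{u-1}\big)$ is analytic on $U$ and vanishes at $u=1$.

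Adding the two summands shows that $\H_\eta$, as given by \eqref{eq_H_function}, agrees with an analytic function on the open neighborhood $U$ of $u=1$. Its power series in $(u-1)$ therefore converges to it uniformly on every closed disk contained in $U$, which proves the lemma. To summarize, the entire argument reduces to the analyticity of $R_\eta$ at the origin, and the remaining steps are routine appeals to composition, antidifferentiation, and the removable-singularity principle.
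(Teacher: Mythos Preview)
Your proof is correct and is precisely the routine unpacking that the paper's one-line proof (``Immediately follows from the definitions'') gestures at: compact support gives convergence of $S_\eta$ near $0$, the inverse function theorem gives $S_\eta^{(-1)}$, the pole of $R_\eta$ at $0$ is removable, and the composition with $\ln u$ together with the removable singularity of $\ln u/(u-1)$ finishes it. There is no substantive difference in approach; you have simply supplied the details the paper omits.
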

\begin{proof} Immediately follows from the definitions. \end{proof}

We will need the following mild technical assumption.
\begin{defi}
\label{definition_regularity}
 A sequence of signatures $\lambda(N)\in \GT_N$ is called \emph{regular}, if there exists a
 piecewise--continuous function $f(t)$ and a constant $C$ such that
\begin{equation}
\label{eq_reg1}
 \lim_{N\to\infty} \frac{1}{N}\sum_{j=1}^{N} \left|\frac{\lambda_j(N)}{N}-f(j/N)\right|=0
\end{equation}
and
\begin{equation}
\label{eq_reg2} \left|\frac{\lambda_j(N)}{N}-f(j/N)\right|<C,\quad \quad j=1,\dots,N,\quad
N=1,2,\dots.
\end{equation}
\end{defi}

\begin{rem}
Informally, the condition \eqref{eq_reg1} means that scaled by $N$
coordinates of $\lambda(N)$ approach a limit profile $f$. The restriction that $f(t)$ is
piecewise--continuous is reasonable, since $f(t)$ is a limit of monotone functions and, thus, is
monotone $($therefore, we only exclude the case of countably many
points of discontinuity for $f$$)$.
We use condition \eqref{eq_reg2} since it guarantees that all the measures which we assign to
signatures and their limits have $($uniformly$)$ compact supports --- thus, these measures are uniquely
defined by their moments.
\end{rem}

\begin{thm}
\label{Theorem_character_asymptotics}$($\cite{BG}, \cite{GP},\cite{GM}$)$
 Suppose that $\lambda(N)\in \GT_N$, $N=1,2,\dots$ is a regular sequence of signatures, such that
 $
  \lim_{N\to\infty} m[\lambda(N)]=\mes \text{ (in weak topology)}.$
 Then for any
 $j=1,2,\dots, N$ we have
 \begin{equation}
 \label{eq_limit_of_logarithm}
  \lim_{N\to\infty} \frac{1}{N}
  \ln\left(\frac{s_{\lambda(N)}(u_1,\dots,u_j,1^{N-j})}{s_{\lambda(N)}(1^N)}\right)=
  \H_\mes(u_1)+\dots+ \H_\mes(u_j),
 \end{equation}
 where the convergence is uniform over an open $($complex$)$ neighborhood
 of $(1,\dots,1).$
\end{thm}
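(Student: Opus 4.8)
The plan is to reduce to the single--variable case $j=1$ by an exact determinantal identity, analyze that case by steepest descent, and then recover the multivariate statement from a $j\times j$ determinant whose leading asymptotics factorize. Throughout I write $L_i:=\lambda_i(N)+N-i$, so that the hypothesis reads $m[\lambda(N)]=\frac1N\sum_{i=1}^N\delta(L_i/N)\to\mes$ weakly, with uniformly compact support guaranteed by the regularity condition \eqref{eq_reg2}. Starting from the Weyl formula $s_\lambda(u_1,\dots,u_N)=\det[u_i^{L_j}]/\prod_{i<j}(u_i-u_j)$ and letting $u_2=\dots=u_N\to1$ (a confluent Vandermonde limit, converting the collapsing rows to the binomial/monomial basis) I would derive the exact identity
\begin{multline*}
\frac{s_{\lambda(N)}(u,1^{N-1})}{s_{\lambda(N)}(1^N)}=\frac{(N-1)!}{(u-1)^{N-1}}\sum_{i=1}^N\frac{u^{L_i}}{\prod_{k\ne i}(L_i-L_k)}\\
=\frac{(N-1)!}{(u-1)^{N-1}}\cdot\frac{1}{2\pi\ii}\oint_{C}\frac{u^z\,dz}{\prod_{k=1}^N(z-L_k)},
\end{multline*}
where $C$ encircles all the $L_k$ (the residue at $z=L_i$ reproduces the divided--difference sum, whose normalization is fixed by $\lim_{u\to1}\mathrm{LHS}=1$). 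Everything after this is pure asymptotics.

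For the steepest descent I substitute $z=Nw$, turning the integral into $\frac{1}{2\pi\ii}\oint\exp(NF(w))\,dw$ up to explicit $N$--dependent prefactors, where $F(w)=w\ln u-\int\ln(w-x)\,\mes(dx)$; here regularity gives $\frac1N\sum_k\ln(w-L_k/N)\to\int\ln(w-x)\mes(dx)$ uniformly for $w$ on a contour that avoids the (uniformly bounded) support. The saddle solves $F'(w_*)=0$, i.e. $G_\mes(w_*)=\ln u$ with $G_\mes(w)=\int\frac{\mes(dx)}{w-x}=S_\mes(1/w)$; hence $w_*=1/S_\mes^{(-1)}(\ln u)$, which for $u$ near $1$ lies outside the support, so $C$ can be taken to pass through $w_*$. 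Collecting the prefactors $(N-1)!\,(u-1)^{-(N-1)}\,N^{1-N}$, the $\ln N$ terms cancel and one obtains $\frac1N\ln(\cdots)\to F(w_*)-1-\ln(u-1)=:L(u)$. To identify $L$ with $\H_\mes$ I differentiate: by the envelope property $F'(w_*)=0$ one has $L'(u)=\frac{w_*}{u}-\frac{1}{u-1}=\frac{1}{u\,S_\mes^{(-1)}(\ln u)}-\frac{1}{u-1}=\H'_\mes(u)$ by \eqref{eq_H_derivative}, while $L(1)=0=\H_\mes(1)$ (both are limits of $\frac1N\ln(\cdots)$, which equals $0$ at $u=1$). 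Hence $L\equiv\H_\mes$, proving the case $j=1$.

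For the passage to $j$ variables I run the same confluent limit keeping $u_1,\dots,u_j$ distinct: this expresses the normalized $j$--variable Schur function as a $j\times j$ determinant divided by $\prod_{a<b}(u_a-u_b)$, whose $(a,b)$ entry is a single--variable contour integral in $u_a$ of the type above. Applying the steepest descent of the previous paragraph entry by entry, each column $a$ carries its own saddle $w_a^*$ with $G_\mes(w_a^*)=\ln u_a$; since the $w_a^*$ are pairwise distinct for distinct $u_a$, the determinant is dominated by its diagonal, the non--identity permutations being exponentially subleading, while the Vandermonde $\prod_{a<b}(u_a-u_b)$ and all polynomial prefactors contribute only $o(N)$ to the logarithm. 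This yields $\frac1N\ln(\cdots)\to\sum_{a=1}^j L(u_a)=\sum_{a=1}^j\H_\mes(u_a)$, and the analytic dependence of $w_a^*$ on $u_a$ near $1$ upgrades this to uniform convergence on a complex neighborhood of $(1,\dots,1)$.

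The main obstacle is making the steepest descent uniform and genuinely rigorous: choosing admissible contours through the saddles that avoid the poles $L_k/N$ uniformly in $N$ and in $u$ near $1$ (this is exactly where regularity is used, to control the empirical log--potential $\frac1N\sum_k\ln(w-L_k/N)$ and its convergence), and, in the multivariate step, proving that the off--diagonal terms of the determinant are truly exponentially smaller than the diagonal, which requires comparing $\mathrm{Re}\,F$ at the several distinct saddles rather than at a single one. The restriction to a neighborhood of $(1,\dots,1)$ is precisely what keeps this manageable, since there every saddle $w_a^*$ sits strictly outside the support of $\mes$.
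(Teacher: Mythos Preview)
The paper does not prove this theorem; it is quoted from \cite{BG}, \cite{GP}, \cite{GM} and used as a black box. Your single--variable argument is exactly the strategy of \cite{GP} (and, in a different guise via HCIZ integrals, of \cite{GM}): the confluent divided--difference identity, the contour integral, the steepest descent with saddle $w_*=1/S_\mes^{(-1)}(\ln u)$, and the identification $L'=\H'_\mes$ via the envelope relation are all correct and correctly normalized.

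There is, however, a real gap in your multivariate step. In the $j\times j$ determinant you obtain, the $(a,b)$ entry is a contour integral in $u_a$ with a $b$--dependent polynomial weight (something like $z^{j-b}$ or $(u_a-1)^{-(N-b)}$ in the integrand). Consequently \emph{every entry in row $a$} shares the same saddle $w_a^*$ and the same exponential growth $\exp\!\bigl(N F(w_a^*)\bigr)$; the off--diagonal entries are \emph{not} exponentially subleading relative to the diagonal, so the determinant is not ``dominated by its diagonal'' in the sense you claim. The correct mechanism is that one factors the common exponential $\exp\!\bigl(N F(w_a^*)\bigr)$ out of each row; what remains is a $j\times j$ determinant of the subleading (polynomial in $N$) corrections, which is essentially a Vandermonde in the $w_a^*$ and hence nonzero precisely because the $w_a^*$ are distinct. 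That residual determinant contributes $o(N)$ to the logarithm, yielding the additive limit $\sum_a \H_\mes(u_a)$. So your final paragraph misidentifies what must be checked: you do not need to show off--diagonal terms are exponentially smaller (they are not), but rather that the post--factorization determinant is nonvanishing and of subexponential size. Once you make this correction, the argument goes through and matches the proofs in the cited references.
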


The uniform measure on the set of domino tilings  of a
rectangular Aztec diamond $\mathcal R(N, \a, m)$ induces a
measure on the set of all possible configurations of $N-\lfloor\frac{k-1}{2}\rfloor$ $V$-squares
in the $k$-th row, for $k=1,\dots, 2N.$ Due to Theorem \ref{uniform} we
can think of it from the other prospective, more precisely, as a
measure on the signatures $\lambda \in \GT_{N-\lfloor\frac{k-1}{2}\rfloor}.$  Let us denote the measure we
get on the set of signatures by
$\rho^k(N)$ and $\lfloor\frac{k-1}{2}\rfloor$ by $t.$

\begin{lem} \label{Schur_generating}
\begin{equation}
\begin{dcases}
  \mathcal S^{U(N)}_{\rho^{k}(N)}(u_1,\dots, u_{N-t})=\frac{s_\omega(u_1,\dots,u_{N-t}, 1^{t})}{s_\omega(1^{N})} \prod\limits_{j=1}^{N-t}\left(\frac{1+u_{j}}{2}\right)^{t},       & k=2t+1, \text{ }
   t=0, \dots, N-1;\\
   \mathcal S^{U(N)}_{\rho^{k}(N)}(u_1,\dots,
   u_{N-t})=\frac{s_\omega (u_1,\dots,u_{N-t}, 1^{t})}{s_\omega(1^{N})}\prod\limits_{j=1}^{N-t}\left(\frac{1+u_{j}}{2}\right)^{t+1},
   & k=2t+2, \text{ } t=0, \dots, N-1.
  \end{dcases}
\end{equation}
\end{lem}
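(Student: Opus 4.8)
The plan is to read $\rho^{k}(N)$ as a one-dimensional marginal of the Markov measure $\mathcal P^N_\omega$ of \eqref{eq:measure1} and to convert the two stochastic kernels $\st$ and $\pr$ into elementary operations on Schur generating functions. By the paragraph preceding the lemma (through Theorem \ref{bij} and Proposition \ref{uniform}) the odd row $k=2t+1$ carries the signature $\mu^{(N-t)}\in\GT_{N-t}$ and the even row $k=2t+2$ carries $\nu^{(N-t)}\in\GT_{N-t}$. Since \eqref{eq:measure1} is a product of the transition weights $\st(\mu^{(i)}\to\nu^{(i)})$ and $\pr(\nu^{(i)}\to\mu^{(i-1)})$, which are genuine stochastic kernels by Lemma \ref{odin}, summing out every coordinate lying after row $k$ in the chain contributes a factor $1$. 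Hence $\rho^{2t+1}(N)$ is the law obtained by propagating $\delta_\omega$ (the law of $\mu^{(N)}$) forward through the first $t$ pairs $(\st,\pr)$, and $\rho^{2t+2}(N)$ is obtained from it by one further application of $\st$.

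Next I would record how $\st$ and $\pr$ act on a Schur generating function $\sum_{\lambda}\rho(\lambda)\,s_\lambda(u_1,\dots,u_n)/s_\lambda(1^n)$ of a measure $\rho$ on $\GT_n$. Multiplying identity \eqref{s} by $\rho(\mu^{(n)})$ and summing over $\mu^{(n)}$ shows that the $\st$-pushforward, which again lives on $\GT_n$, has generating function equal to the original one times $\prod_{i=1}^{n}\tfrac{1+u_i}{2}$. Multiplying the branching identity \eqref{p} by $\rho(\nu^{(n)})$ and summing over $\nu^{(n)}$ shows that the $\pr$-pushforward, now a measure on $\GT_{n-1}$, has generating function (in the length-$(n-1)$ normalization) equal to the original one with its last variable specialized to $1$ and then discarded. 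In brief: $\st$ multiplies by $\prod_i\tfrac{1+u_i}{2}$, and $\pr$ sets the last variable equal to $1$.

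Starting from $\rho^1(N)=\delta_\omega$, whose generating function is $s_\omega(u_1,\dots,u_N)/s_\omega(1^N)$, I would then induct on $t$. The step from row $2t+1$ to row $2t+2$ applies one $\st$ and raises every exponent in $\prod_j\bigl(\tfrac{1+u_j}{2}\bigr)$ from $t$ to $t+1$; the step from row $2t+2$ to row $2t+3$ applies one $\pr$, i.e. sets $u_{N-t}=1$. Here the decisive simplification is that $\tfrac{1+u}{2}$ equals $1$ at $u=1$: the specialization $u_{N-t}=1$ annihilates the whole accumulated factor $\bigl(\tfrac{1+u_{N-t}}{2}\bigr)^{t+1}$, while inside $s_\omega$ it merely lengthens the run of ones, sending $s_\omega(u_1,\dots,u_{N-t},1^{t})$ to $s_\omega(u_1,\dots,u_{N-t-1},1^{t+1})$. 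This is precisely the asserted expression for row $2t+3$, so the induction closes and both displayed formulas follow.

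The step I expect to require the most care is the bookkeeping of normalizations and of which variables are specialized. One must verify that the denominator remains $s_\omega(1^N)$ at every stage (the same telescoping of dimension factors already visible in the proof of Proposition \ref{uniform}), that the length-$(N-t)$ normalization $s_\lambda(1^{N-t})$ built into $\mathcal S^{U(N)}_{\rho^{k}(N)}$ is exactly the one for which the $\pr$-rule takes the clean form above, and that the variables set to $1$ by the $t$ successive $\pr$-steps are $u_N,u_{N-1},\dots,u_{N-t+1}$ and never any of $u_1,\dots,u_{N-t}$. It is this last point that guarantees each surviving $u_j$ passes through all $t$ (respectively $t+1$) of the $\st$-multiplications, producing the exponents $t$ and $t+1$ and the block $1^{t}$ in the final answer.
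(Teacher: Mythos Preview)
Your argument is correct and is exactly the computation that the paper is pointing to; the paper's own proof is the single sentence ``This statement is an immediate corollary of Proposition~\ref{uniform},'' and what you have written is precisely the unpacking of that corollary via the Markov structure \eqref{eq:measure1} and the defining identities \eqref{s}--\eqref{p}. Your inductive bookkeeping (which variable is specialized, why the denominator stays $s_\omega(1^N)$, and why the exponent jumps by one at each $\st$) is accurate and more careful than the paper itself.
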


\begin{proof}
This statement is an immediate corollary of Proposition \ref{uniform}.
\end{proof}

We consider $N\rightarrow \infty$ asymptotics such that all the
 dimensions of $\mathcal R(N, \a(N), m(N))$ linearly grow with
 $N$. Assume that the sequence of signatures $\omega(N)$ corresponding to the
 first row is regular and $ \lim\limits_{N\to\infty} m[\omega(N)]=\mes_{\omega}.$
Then from Theorem \ref{Theorem_character_asymptotics} it follows that
for any $j=1,2,\dots, N$
\begin{equation}
 \label{eq_limit_of_log}
  \lim_{N\to\infty} \frac{1}{N}
  \ln\left(\frac{s_{\lambda(N)}(u_1,\dots,u_j,1^{N-j})}{s_{\lambda(N)}(1^N)}\right)=
  \H_{\mes_{\omega}}(u_1)+\dots+ \H_{\mes_{\omega}}(u_j).
 \end{equation}
Let us
 look at what is happening at the $k$-th row, we assume that
 $k=[2 \kappa N ]$ and $\kappa<1.$ It has $N-t=\lfloor \frac{k-1}{2} \rfloor$
 $V$-squares. We have
\begin{multline}
\lim_{N\to\infty} \frac{1}{(1-\kappa)N} \log \left( \mathcal
S^{U(N)}_{\rho^k(N)}(u_1,\dots,u_{j}, 1^{N-t-j} )\right)=\\ =\lim_{N\to\infty}
\frac{1}{(1-\kappa) N}\log \left(\frac{s_{\w}(u_1,\dots,u_{j}, 1^{N-t-j} )}{s_{\omega}(1^N)}
\prod\limits_{i=1}^{ j}\left(\frac{1+u_{i}}{2}\right)^{t / t+1} \right)=\\
=\sum\limits_{i=1}^{j}\left(\frac{\kappa}{1-\kappa}\log \left (\frac{1+u_i} {2}\right)+\frac{1}{1-\kappa}\H_{\mes_\omega}(u_i)\right).
\end{multline}
Thus, from Theorem \ref{theorem_moment_convergence} we get the following proposition.

\begin{prop} \label{ourmoment_convergence}
 Assume that the sequence of signatures $\omega(N)$ corresponding to the
 first row is regular and $ \lim\limits_{N\to\infty}
 m[\omega(N)]=\mes_\omega$ $($ weak convergence$)$.
Random measures $m[\rho^k(N)]$ converge as
$N\to\infty$ in probability, in the sense of moments to a \emph{deterministic} measure $\mes^\kappa$ on
$\mathbb R$, whose moments are given  by
\begin{equation}
\label{eq_limit_moments_A}
 \int_{\mathbb R} x^j \mes^\kappa(dx)= \frac{1}{2(j+1)\pi\ii} \oint_{1} \frac{dz}{z}
 \left(zQ'(z)+\frac{z}{z-1}\right)^{j+1},
\end{equation}
where $Q'(u)=\frac{1} { 1-\kappa }\left (\frac{\kappa}
{1+u}+\H'_{\mes_\omega}(u) \right)$ and the integration goes over a small positively oriented contour around $1$.
\end{prop}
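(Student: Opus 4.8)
The plan is to verify the hypotheses of Theorem \ref{theorem_moment_convergence} for the sequence of measures $\rho^k(N)$ and then simplify the resulting moment formula into the stated contour-integral form. The bulk of the analytic work has already been done: the preceding computation establishes that
\begin{equation*}
\lim_{N\to\infty} \frac{1}{(1-\kappa)N} \log\left(\mathcal S^{U(N)}_{\rho^k(N)}(u_1,\dots,u_j,1^{N-t-j})\right) = \sum_{i=1}^{j}\left(\frac{\kappa}{1-\kappa}\log\left(\frac{1+u_i}{2}\right)+\frac{1}{1-\kappa}\H_{\mes_\omega}(u_i)\right).
\end{equation*}
The first thing I would do is reconcile the normalizations. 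Theorem \ref{theorem_moment_convergence} is stated for measures on $\GT_{N'}$ with the $\frac{1}{N'}$ normalization, whereas here the relevant rank is $N-t \sim (1-\kappa)N$; so I set $Q(u)=\frac{\kappa}{1-\kappa}\log\left(\frac{1+u}{2}\right)+\frac{1}{1-\kappa}\H_{\mes_\omega}(u)$, which is exactly the per-variable limit above divided by the correct factor, and which is analytic near $u=1$ by the lemma on convergence of $\H_{\mes_\omega}$ together with the analyticity of $\log\frac{1+u}{2}$ at $u=1$. Differentiating gives $Q'(u)=\frac{1}{1-\kappa}\left(\frac{\kappa}{1+u}+\H'_{\mes_\omega}(u)\right)$, the formula appearing in the statement.

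Having identified $Q$, Theorem \ref{theorem_moment_convergence} immediately yields convergence in probability, in the sense of moments, of $m[\rho^k(N)]$ to a deterministic compactly supported measure $\mes^\kappa$, with moments given by
\begin{equation*}
\int_{\mathbb R} x^j \mes^\kappa(dx)= \sum_{\ell=0}^j \frac{j!}{\ell!(\ell+1)!(j-\ell)!}\frac{\partial^\ell}{\partial u^\ell}\left(u^j Q'(u)^{j-\ell}\right)\Biggr|_{u=1}.
\end{equation*}
The remaining task is purely algebraic: to show this double expression equals the contour integral $\frac{1}{2(j+1)\pi\ii}\oint_1 \frac{dz}{z}\left(zQ'(z)+\frac{z}{z-1}\right)^{j+1}$. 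The natural route is to expand $\left(zQ'(z)+\frac{z}{z-1}\right)^{j+1}$ by the binomial theorem, pairing a power of $zQ'(z)$ with a power of $\frac{z}{z-1}$, and to read off the residue at $z=1$ by means of the standard identity $\frac{1}{2\pi\ii}\oint_1 \frac{g(z)}{(z-1)^{\ell+1}}dz = \frac{1}{\ell!}g^{(\ell)}(1)$. The pole at $z=1$ of order dictated by the power of $\frac{1}{z-1}$ is what converts the contour integral into the $\ell$-th derivative evaluated at $u=1$, and the combinatorial prefactor $\frac{j!}{\ell!(\ell+1)!(j-\ell)!}$ should emerge from collecting the binomial coefficients together with the factor $\frac{1}{j+1}$.

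The main obstacle I anticipate is precisely this matching of the two moment expressions: one must track the contribution of the factor $\frac{z}{z-1}$, whose expansion near $z=1$ produces the $(z-1)^{-1}$ pole but also regular corrections (since $\frac{z}{z-1}=\frac{1}{z-1}+1$), so the binomial expansion mixes different orders of pole and the reindexing to reach the clean prefactor $\frac{j!}{\ell!(\ell+1)!(j-\ell)!}$ requires care. Rather than carry this out by hand, I would note that this identity between the ``differential'' form \eqref{eq_limit_moments_A} and the contour-integral form is exactly the reformulation established in \cite{BG}; indeed both expressions compute the moments of the free-probabilistic quantization map applied to $Q$, so invoking that equivalence from \cite{BG} closes the argument. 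Thus the proof reduces to the identification of $Q$ carried out above, and the moment formula follows.
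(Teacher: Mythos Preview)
Your approach is correct and essentially the same as the paper's: verify the hypothesis of Theorem \ref{theorem_moment_convergence} with the function $Q$ you identified, and then convert the resulting derivative-sum into the stated contour integral. The only difference is that the paper carries out the algebraic identity explicitly rather than deferring to \cite{BG}. The computation you sketch is exactly right; the clean way around the ``mixed terms'' obstacle you anticipate is to write each $\ell$-th derivative at $1$ as $\frac{\ell!}{2\pi\ii}\oint_1 \frac{z^j Q'(z)^{j-\ell}}{(z-1)^{\ell+1}}\,dz$, then observe that the $\ell=-1$ term $\frac{z^j Q'(z)^{j+1}}{j+1}$ is holomorphic near $z=1$ and may be freely added to the integrand, after which the full sum $\sum_{\ell=-1}^{j}$ collapses via the binomial theorem to $\frac{z^j Q'(z)^{j+1}}{j+1}\bigl(1+\tfrac{1}{Q'(z)(z-1)}\bigr)^{j+1}$, and this simplifies to the stated form.
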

\begin{proof}
From Theorem \ref{theorem_moment_convergence} using the integral
representation of the derivative we get
 \begin{multline}
\label{eq_moment_integral_formula}
 M_j(\mes^k)=\sum_{\ell=0}^j \frac{j!}{\ell! (\ell+1)! (j-\ell)!} \frac{\partial^\ell}{\partial u^\ell}\left(u^j
 Q'(u)^{j-l}\right)\Biggr|_{u=1}\\=\frac{1}{2\pi\ii} \oint_{1}  \sum_{l=0}^j  \frac{1}{l+1} {j\choose l} \frac{z^j
 Q'(z)^{j-l}}{(z-1)^{l+1}} dz =
\\\quad \text{(we can add a holomorphic function to the integrand)}
 \\= \frac{1}{2\pi\ii} \oint_{1} \frac{z^j Q'(z)^{j+1}}{j+1} \sum_{l=-1}^j  {{j+1} \choose {l+1}}
 \frac{1}{
 Q'(z)^{l+1}(z-1)^{l+1}} dz\\= \frac{1}{2\pi\ii} \oint_{1}  \frac{z^j Q'(z)^{j+1}}{j+1}
 \left(1+\frac{1}{Q'(z)(z-1)}\right)^{j+1} dz\\= \frac{1}{2(j+1)\pi\ii} \oint_{1} \frac{dz}{z}
 \left(zQ'(z)+\frac{z}{z-1}\right)^{j+1}.
\end{multline}
\end{proof}

\begin{rem}
\label{rem:lln}

Proposition \ref{ourmoment_convergence} implies the existence of a
limit shape for the model we are considering. As we noted in the
introduction, for a uniform measure on tilings this
result is not new, it follows from \cite{CKP}, \cite{KO}. However, Proposition
\ref{ourmoment_convergence} supplies explicit formulas for the answer
which allows the further study of properties of limit shapes. Also, a
similar analysis is possible in the case of some non-uniform measures, see
\nameref{sec:A}.

We can put the computation of this limit shape into the
context of the \textit{quantized free convolution}. This notion
$($introduced in \cite{BG}$)$ is closely but non-trivially related to the
notion of the free convolution, a well-known operation introduced by
D. Voiculescu $($see e.g. \cite{VDN}$)$. The computation of the limit
shape in our case results in the following algorithm: we start with an
arbitrary measure with compact support $\mes_w$, then consider a
semigroup $\mes_w \otimes t \mathcal{B}$ $($the quantized free
convolution of two measures$)$, where $t>0$ and $\mathcal B$ is
an \textit{extreme} beta character, and  finally compute quantized free \textit{projections} of the measures from this semigroup.
We refer to \cite{BG} for definitions of these operations and notions.

The relation between tiling models and free probability is very
interesting and yet to be well-understood. See Remark \ref{rem:clt} for further comments.

\end{rem}

\subsection{Height function}
\label{sec:height}
\begin{defi} \label{defi_height} \cite{T} 
Let $D\in \mathfrak D(N,\a,m)$ be a domino tiling of a rectangular Aztec diamond $\mathcal R(N,\a, m).$
Let us impose checkerboard coloring on the plane $(i,j)$ such that the boundary row of $\mathcal R(N,\a, m)$ consists of the dark squares, see Figure \ref{fig:height}. A height function $h_D$ is an integer-valued function on the vertices of the lattice squares of $\mathcal R(N,\a, m)$ which satisfies the following properties:
\begin{itemize}
\item if an edge $(u,v)$ does not belong to any domino in $D$ then $h(v)=h(u)
+1$ if $(u,v)$ has a dark square on the left, and $h(v)=h(u)
-1$ otherwise.
\item if an edge $(u,v)$ belongs to a domino in $D$ then $h(v)=h(u)
+3$ if $(u,v)$ has the dark square on the left, and $h(v)=h(u)-3$ otherwise.
\item $h_D(u_0)=0$, where $u_0$ is the vertex in the upper right corner of  $\mathcal R(N,\a, m),$ see Figure \ref{fig:height}.
 \end{itemize}
 \end{defi}

 \begin{figure}[h]
\includegraphics[width=0.45\linewidth]{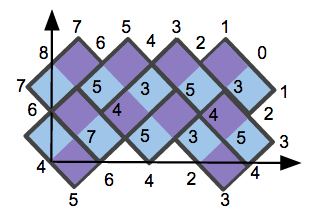}
\caption{An example of a domino tiling of  $\mathcal R(2, \a=(1,4), 2)$ and its height function.  }
  \label{fig:height}
\end{figure}

Fix a domino tiling $D\in \mathfrak D(N, \a, m)$. Note that the height function is uniquely determined on the boundary
of the domain. Let us calculate it inside.
Consider a vertex $(i,j)$ inside the domain. It is the left-most
vertex of a unit square that belongs to a row with number
$(2j+1).$ Such a row contains $(N-\lfloor j \rfloor)$
$V$-squares. Let
$Y_{\mu^{D, j}}$ be the Young
diagram corresponding to this row under the Construction \ref{con},
where $\mu^{D, j}=(\mu^{D, j}_1\geq\mu^{D, j}_2 \geq \dots\geq \mu^{D,
  j}_{N-\lfloor j \rfloor}).$ Define the function

\begin{equation}\label{eq:delta}
\Delta^N_D(i,j)\colon = \left| \left\{ 1 \le s \le N- \lfloor j \rfloor: \mu^{D, j}_s +(N -\lfloor j \rfloor)- s \ge i \right\}\right|.
\end{equation}

Denote by $t_k$ the position of the $k$-th $V$-square in this row counting from the left.
By construction we have $$\mu^{D, j}_s+(N-\lfloor j \rfloor)-s=t_{N-\lfloor j \rfloor-s+1}-1.$$
 Thus, $\Delta^N_D(i,j)$ simply counts the number of $V$-squares to the right from the vertex $(i,j)$ in the corresponding row.
Note that we can rewrite it in terms of the measure $m[\mu^{D, j}]$
 \begin{equation}
\Delta^N_D(i,j)=(N-\lfloor j \rfloor)\int 1_{[ \frac{
    i}{N-\lfloor j \rfloor},\infty]} dm[\mu^{D, j}].
\end{equation}

\begin{lem}  \label{height_formula}
Let $D\in \mathfrak D(N,\a,m)$ be a domino tiling of the rectangular
Aztec diamond $\mathcal R(N,\a, m).$ Let $(i, j)$ be a vertex inside
the domain. Then
 \begin{equation}\label{heighteq}
   h_D((i,j))= 2\left( 2N+m-j-\lfloor i\rfloor-2\Delta^N_D(i,j) \right),\text{ } j=0,\dots, N.
 \end{equation}

\end{lem}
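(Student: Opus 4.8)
The plan is to prove the formula \eqref{heighteq} by establishing it first on the boundary of the domain, where the height function is pinned down by the normalization $h_D(u_0)=0$ together with the edge rules of Definition \ref{defi_height}, and then propagating it inward row by row. The quantity $\Delta^N_D(i,j)$ has already been identified as the number of $V$-squares strictly to the right of the vertex $(i,j)$ in the row of index $2j+1$, so the heart of the argument is to show that as we move along the vertices, each increment of the height function matches the increment of the right-hand side of \eqref{heighteq}. Since both $h_D$ and the claimed formula are determined by their value at a single vertex plus their local increments, it suffices to verify that the increments agree for every edge in the domain.

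First I would fix the upper right corner $u_0$ and verify \eqref{heighteq} there directly: at that corner $j$, $\lfloor i\rfloor$, and $\Delta^N_D$ take their extreme values, and one checks that the right-hand side of \eqref{heighteq} evaluates to $0$, matching $h_D(u_0)=0$. Next I would analyze horizontal moves within a single row (fixed $j$): moving one unit to the left decreases $\lfloor i\rfloor$ by one, and the key point is how $\Delta^N_D(i,j)$ changes. By the interpretation of $\Delta^N_D$ as a count of $V$-squares to the right, crossing a vertex either passes a $V$-square or a $\Lambda$-square, which translates—via the checkerboard coloring and the edge rules—into exactly the $\pm 1$ or $\pm 3$ increments prescribed in Definition \ref{defi_height}. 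I would check that when we pass over a $V$-square the combination $-\lfloor i\rfloor - 2\Delta^N_D$ produces the correct sign and magnitude, and likewise for a $\Lambda$-square, so that the factor $2$ in front of \eqref{heighteq} reproduces the $\pm1,\pm3$ values.

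Then I would handle vertical moves between consecutive rows, i.e. changing $j$ by one: here the term $-j$ accounts for the base shift, and I would show that the change in $\Delta^N_D(i,j)$ between the two rows, governed by the interlacing conditions of Definition \ref{prop}, again matches the edge increments dictated by the coloring. The interlacing $\mu^{(i)}\prec \nu^{(i+1)}$ and the vertical/horizontal strip conditions guarantee that the count of $V$-squares to the right changes in a controlled way as one descends or ascends between rows, which is precisely what is needed to match the vertical edge rules.

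The main obstacle will be the bookkeeping of the checkerboard coloring together with the two different parities of rows ($V$-squares in odd rows versus even rows, encoded by the floor functions $\lfloor i\rfloor$ and $\lfloor j\rfloor$), since the sign conventions in Definition \ref{defi_height} depend on whether the dark square lies to the left, and this must be reconciled carefully with the conventions used to define $\Delta^N_D$ and the Young diagram encoding of Construction \ref{con}. I would organize this by induction on the rows, starting from the boundary where the formula is known, and at each inductive step use the identity $\mu^{D,j}_s+(N-\lfloor j\rfloor)-s = t_{N-\lfloor j\rfloor - s+1}-1$ to translate the diagram data back into explicit positions $t_k$ of the $V$-squares; this makes the increment computation fully explicit and reduces the verification to a finite case-check on the four local configurations (the two types of squares crossed by horizontal edges, and the two parities of vertical transitions).
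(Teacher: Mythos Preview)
Your overall strategy (fix a base point, check edge increments) is valid, but you are making the argument harder than it needs to be, and the paper's proof is considerably more direct. The paper does \emph{not} verify vertical increments at all. Instead, for each fixed $j$ it first computes the height at the rightmost vertex of that row by walking along the outer boundary from $u_0$; this gives $h_D = 2(N-j)$ there, with $\Delta^N_D=0$. Then it takes a single horizontal zigzag path along row $2j+1$ from the right boundary to $(i,j)$, chosen to stay on domino boundaries. Along such a path every lattice edge contributes $\pm1$ (never $\pm3$, since no domino edge is crossed), and crossing one full square contributes exactly $+2$ for a $\Lambda$-square and $-2$ for a $V$-square. Since there are $N+m-\lfloor i\rfloor$ squares to the right of $(i,j)$ and $\Delta^N_D(i,j)$ of them are $V$-squares, the formula drops out in one line.

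Your proposed verification of vertical increments via the interlacing relations of Definition~\ref{prop} would work in principle, but it is unnecessary overhead: once you have the right-boundary value for every $j$ separately, the rows decouple and no inter-row comparison is needed. Also, your description of horizontal increments as ``$\pm1$ or $\pm3$'' suggests you are tracking individual lattice edges rather than whole squares; the cleaner bookkeeping---and the one the paper uses---is to observe that the zigzag path avoids domino-interior edges entirely, so each square contributes a net $\pm2$ determined solely by its $V/\Lambda$ type.
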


\begin{proof}
Consider a path from $(-\frac{1}{2},j)$ to $(N+m+\frac{1}{2},j)$ for
the even rows and a path from $(0,j)$ to $(N+m,j)$ for the odd rows
such that it goes from vertex
$(i,j)$ to $(i+1, j)$ along the boundary of a domino $d\in D,$ see
Figure \ref{fig:height2}.

 \begin{figure}[h]
\includegraphics[width=0.3\linewidth]{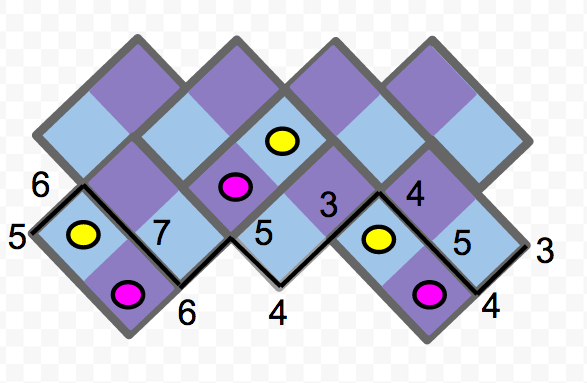}
\caption{An example of a domino tiling of  $\mathcal R(2, \a=(1,4), 2)$ and its height function.  }
  \label{fig:height2}
\end{figure}

By the definition of the
height function with every step along this path it changes by one. The
sign of this change depends on whether the path goes along the
boundary of a $V$-square or a $\Lambda$-square. Let us follow this
path from right to left. Then the height function increases by two if the path
goes along the boundary of a $\Lambda$-square and decreases by two
otherwise. Note that the number of squares to the right
from the vertex $(i,j)$ is equal to $(N+m-\lfloor
 i\rfloor).$ So the total increment
of the height function is equal to

 $$2\left(\#\{\Lambda-\text{squares}\}-\#\{V-\text{squares}\}\right)=2\left((N+m-\lfloor
 i\rfloor)-2\#\{V-\text{squares}\}\right).$$

Thus, what is left is to compute the value of the height
function on the right boundary of the domain. It is easily done by
going alone the boundary starting from the point $u_0.$ We see that it is equal
to $2(N-j).$
 \end{proof}

\begin{thm}  \label {LLN} $($Law of Large numbers for the height function.$)$
Consider $N\rightarrow \infty$ asymptotics such that all the
 dimensions of a rectangular Aztec diamond $\mathcal R(N, \a(N), m(N))$ linearly grow with
 $N$.
 Assume that the sequence of signatures $\omega(N)$ corresponding to the
 first row is regular, $ \lim\limits_{N\to\infty}
 m[\omega(N)]=\mes_\omega$ $($weak convergence$)$ and  $m(N)/N \rightarrow \nu\in \mathbb R_{>0}$ as $N$ goes to infinity. Let us fix $\kappa \in (0, 1)$ and let
$\mes^\kappa$ be the limit of measures $m[\rho^k(N)],$ which is given by Proposition \ref{ourmoment_convergence}.

Define
$$ {\bold h}(\chi,\kappa):=2\left(2+\nu-\kappa-\chi-2(1-\kappa)
  \int\limits_{\frac{\chi}{1-\kappa}}^{\infty}{\bold d}\mes^\kappa(x) \right).$$
Then the random height function $h_{\mathfrak D}$ converges uniformly in
probability to a deterministic function $ {\bold h}(\chi,\kappa)$:
 $$\frac{h_{\mathfrak D}([ \chi N ], [ \kappa N ])}{N} \rightarrow {\bold h}(\chi,\kappa), \text{ as } N\rightarrow \infty, $$
 where $(\chi,\kappa)$ are the new continuous parameters of the
 domain.
\end{thm}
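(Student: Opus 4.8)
The plan is to read the limit directly off the exact formula in Lemma \ref{height_formula}, using the moment convergence already established in Proposition \ref{ourmoment_convergence}. Substituting $i=[\chi N]$ and $j=[\kappa N]$ into \eqref{heighteq} and dividing by $N$ gives
$$
\frac{h_{\mathfrak D}([\chi N],[\kappa N])}{N}=2\left(\frac{2N+m-j-\lfloor i\rfloor}{N}-\frac{2\Delta^N_{\mathfrak D}(i,j)}{N}\right).
$$
The deterministic part contributes $2(2+\nu-\kappa-\chi)$ in the limit, by the hypotheses $m/N\to\nu$, $j/N\to\kappa$ and $\lfloor i\rfloor/N\to\chi$, so the whole problem reduces to identifying the limit of the random quantity $\Delta^N_{\mathfrak D}(i,j)/N$.

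For the core step I would use that the vertex $(i,j)$ lies in the row $k=2j+1$, so that the signature $\mu^{\mathfrak D,j}$ is distributed according to $\rho^{k}(N)$, i.e. $m[\mu^{\mathfrak D,j}]=m[\rho^{k}(N)]$ as random measures. Since $k=2[\kappa N]+1=[2\kappa N]+O(1)$ and the row carries $N-\lfloor j\rfloor\sim(1-\kappa)N$ $V$-squares, Proposition \ref{ourmoment_convergence} yields convergence of $m[\rho^{k}(N)]$ to the deterministic measure $\mes^\kappa$ in the sense of moments, in probability. The uniform support bound \eqref{eq_reg2} forces all these random measures and their limit into a fixed compact interval, so $\mes^\kappa$ is determined by its moments and moment convergence upgrades to weak convergence in probability. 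Recalling the representation following \eqref{eq:delta} and using $i/(N-\lfloor j\rfloor)\to\chi/(1-\kappa)$ together with $(N-\lfloor j\rfloor)/N\to1-\kappa$, I obtain
$$
\frac{\Delta^N_{\mathfrak D}(i,j)}{N}=\frac{N-\lfloor j\rfloor}{N}\int 1_{[i/(N-\lfloor j\rfloor),\,\infty]}\,dm[\mu^{\mathfrak D,j}]\longrightarrow(1-\kappa)\int_{\chi/(1-\kappa)}^{\infty}d\mes^\kappa(x)
$$
in probability, at every $\chi$ for which $\chi/(1-\kappa)$ is a continuity point of the distribution function of $\mes^\kappa$. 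Combined with the deterministic terms, this is exactly the pointwise-in-probability convergence to $\mathbf h(\chi,\kappa)$.

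The remaining and most delicate step is to promote pointwise convergence to the claimed \emph{uniform} convergence in probability over $(\chi,\kappa)$. The clean route is equicontinuity: the domino height function is Lipschitz with a universal constant, so after rescaling the maps $(\chi,\kappa)\mapsto h_{\mathfrak D}([\chi N],[\kappa N])/N$ are equi-Lipschitz, uniformly in $N$ and in the tiling. The set of $\chi$ giving continuity points is dense (its complement is at most countable), and $\mes^\kappa$ depends continuously on $\kappa$ through the formula in Proposition \ref{ourmoment_convergence}; hence I have convergence in probability on a countable dense set of parameters. Equicontinuity then forces the convergence to be uniform in probability, and simultaneously shows the limit $\mathbf h$ is itself Lipschitz, hence continuous — which a posteriori rules out atoms of $\mes^\kappa$ in the interior and validates the pointwise identity everywhere.

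The main obstacle is precisely this uniformization, rather than the pointwise identity, which is essentially immediate from Lemma \ref{height_formula} and Proposition \ref{ourmoment_convergence}. Concretely, the work lies in (i) justifying that moment convergence of $m[\rho^k(N)]$ upgrades to convergence of the tail functional $\int 1_{[\,\cdot\,,\infty]}\,dm[\mu^{\mathfrak D,j}]$ — which needs the compact-support/moment-determinacy input from \eqref{eq_reg2} and the handling of continuity points — and (ii) controlling the joint behavior in both coordinates via the equi-Lipschitz bound on rescaled height functions, so that a finite $(\chi,\kappa)$-net suffices to conclude uniform convergence in probability.
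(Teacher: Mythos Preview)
Your proposal is correct and follows essentially the same route as the paper: use Lemma~\ref{height_formula} to write the rescaled height function as a deterministic affine part plus a tail integral of $m[\rho^k(N)]$, then invoke Proposition~\ref{ourmoment_convergence}. The paper's own proof is a two-line sketch that delegates the passage from moment convergence to convergence of the distribution function to \cite[Proposition~2.2]{BBO}, whereas you have spelled out that step (compact support, moment determinacy, continuity points, equi-Lipschitz upgrade to uniformity) in reasonable detail.
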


 \begin{proof}
From Lemma \ref{height_formula} we see that for a fixed $j=[ \kappa
N]$ the height function is the
scaled distribution function of the measure $m [\rho^k(N)]$ up to a constant, where $k$ is the
number of the corresponding row. Thus, from Proposition
\ref{ourmoment_convergence} the Law of Large numbers follows, see
e.g. Proposition 2.2 \cite{BBO}.

 \end{proof}

 \section{Properties of the limit measure }
\label{sec:4}
Let us fix some notation. We consider $N\rightarrow \infty$ asymptotics such that all the
 dimensions of a rectangular Aztec diamond $\mathcal R(N, \a(N), m(N))$ linearly grow with
 $N$. The limit domain scaled by $N$ is denoted $\mathcal R$ and
 the new continuous coordinates inside the domain are $(\chi,
 \kappa).$ We also assume that the sequence of signatures $\omega(N)$ corresponding to the
 first row is regular (see Definition \ref{definition_regularity}) and the sequence of measures $\{
 m[\omega(N)]\}$ weakly converge to $\mes_\omega.$ In this section our
 goal is to compute the density of the limit measures $\mes^\kappa,$
 defined in Proposition \ref{ourmoment_convergence} .

\label{sec:4}
\subsection{Stieltjes transform of the limit measure}
Note that under our assumptions $\mes_\omega$ has compact
support. Moreover, it is absolutely continuous with respect to the Lebesgue measure and its density takes values in $[0,1].$

Recall that the Stieltjes transform of a compactly-supported measure $\eta$
is defined as
$$\textup{St}_\eta(t)=\int\limits_{\mathbb R} \frac{\eta[ds]}{t-s},$$
for  $t\in \mathbb C\setminus \textup{Support}(\eta).$

If the measure $\eta$ has moments $M_k$ of any order $k$
then the Stieltjes transform admits for each integer $n$ an
asymptotic expansion in the neighbourhood of infinity given by
$$\textup{St}_\eta(t)=\sum_{k=0}^{n}\frac{M_k}{t^{k+1}}+o\left(\frac{1}{t^{k+1}}\right).$$

In particular, in this case there is the following connection between the moment
generating function $S_\eta$ \eqref{eq:generating} and the Stieltjes transform $\textup{St}_\eta$:
$$S_{\eta}\left(\frac{1}{t}\right)=\textup{St}_\eta(t)$$
for $t$ in the neighborhood of infinity.

Let $x\in \mathbb C$ and $\kappa \in (0,1).$ Consider the following
system of equations in $z$ and $t,$ where $z\in \mathbb C\setminus
\mathbb R_-$ and $t\in \mathbb C\setminus
\textup{Support}(\mes_\omega)$

\begin{equation}\label{sys}
\begin{cases}
\F_{\kappa}(z{, }t)=x, \\
 \textup{St}_{\mes_\omega}(t) =\log(z),
\end{cases}
\end{equation}

 where we consider the principal branch of the logarithm and
\begin{equation} \label{eq:F}
\F_{\ka}(z,t):=\frac{z}{(1-\ka)}\Big (\frac{t}{z}- \frac{1}{(z-1)}+\frac{\kappa}{1 + z} \Big ) + \frac{z}{z - 1}.
\end{equation}

This system of equations will provide formulas for the limit shape. We
start
the analysis of (\ref{sys}) considering the case when $x$ is
sufficiently large.
The formal power series formalism will be useful for us.

Recall that whenever a formal power series of the form $ g(u)=\sum\limits_{i=1}^{\infty} g_i u^i \in \mathbb R[[u]]$
has  $g_1\neq 0,$ there exists a formal power series
$ h(u)=\sum\limits_{i=1}^{\infty} h_i u^i \in \mathbb R[[u]]$
that is a unique composition inverse of
$g(u),$ meaning that $g(h(u))=u.$

In the case of Laurent series of the form
$p(u)=\frac{1}{u}+\sum\limits_{i=0}^{\infty}p_i u^i \in \mathbb R((u))$ there also exists
a unique composite inverse Laurent series
$q(u)=\sum\limits^{\infty}_{i=1} q_i (\frac{1}{u})^i \in \mathbb R((u))$
such that $p(q(u))=u.$

Note that $\textup{St}_{\mes_\w}(t)$ is a uniformly convergent power series in
$\frac{1}{t}$ for $t$ in a neighborhood of infinity. Let $$t(z):=\textup{St}^{(-1)}_{\mes_\w}(\log z),$$ which we
view as a Laurent series in $(z-1).$ More precisely, we have
$$t(z)=\frac{1}{z-1}+\sum\limits_{i=0}^{\infty}u_i(z-1)^i, \quad
u_i\in \mathbb C.$$
Substituting this formula into $\F_\kappa(z, t)$ (\ref{sys}) we
obtain that $\F_\kappa(z, t(z))$ is of the following form as Laurent series in $(z-1):$
$$\F_\kappa(z, t(z))=\frac{1}{z-1}+\sum\limits_{i=0}^{\infty}f_i (z-1)^i, \quad
f_i\in \mathbb C.$$
This series is uniformly convergent for $z$ in a punctured neighborhood of $1.$

Define $z^{\kappa}(x)$ as the composite inverse to $\F_\kappa(z, t(z)).$
Note that $z^{\kappa}(x)$ is formal power series in $\frac{1}{x},$
which is uniformly convergent for $x$ in a neighborhood of infinity.

\begin{lem} \label{series}
The following formula is valid for $x$ in a neighborhood of infinity
$$\textup{St}_{\mes^\kappa}(x)=\log(z^{\kappa}(x)),$$
where $\textup{St}_{\mes^\kappa}(x)$ is the Stieltjes transform
of the limit measure $\mes^\kappa,$ defined in Proposition \ref{ourmoment_convergence}.
\end{lem}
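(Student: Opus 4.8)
The plan is to reduce the claimed identity to a matching of power-series coefficients in $1/x$, using the moment formula of Proposition \ref{ourmoment_convergence} together with a change of variables in a contour integral.

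First I would establish the algebraic identity $\F_\kappa(z, t(z)) = z Q'(z) + \frac{z}{z-1}$, where $Q'$ is the function appearing in Proposition \ref{ourmoment_convergence}. The starting point is the relation $S_{\mes_\omega}(1/t) = \textup{St}_{\mes_\omega}(t)$ near infinity, which gives $\textup{St}^{(-1)}_{\mes_\omega}(w) = 1/S^{(-1)}_{\mes_\omega}(w)$ and hence $t(z) = 1/S^{(-1)}_{\mes_\omega}(\log z)$. Comparing this with formula \eqref{eq_H_derivative} for $\H'$ yields $\H'_{\mes_\omega}(z) = \frac{t(z)}{z} - \frac{1}{z-1}$. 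Substituting into $Q'(z) = \frac{1}{1-\kappa}(\frac{\kappa}{1+z} + \H'_{\mes_\omega}(z))$ and comparing term by term with the definition \eqref{eq:F} of $\F_\kappa$ gives the identity. As a consequence, $G(z) := \F_\kappa(z, t(z))$ is exactly the base of the integrand in the moment formula; moreover the pole of $t(z)$ cancels against $-\frac{1}{z-1}$, so $G(z) = \frac{1}{z-1} + (\text{holomorphic at } 1)$, and $G$ is a biholomorphism from a punctured neighborhood of $1$ onto a neighborhood of $\infty$ whose local inverse is $z^\kappa(x)$.

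Next, since $\mes^\kappa$ is a weak limit of counting measures of signatures with uniformly bounded support (regularity, condition \eqref{eq_reg2}), it is compactly supported, so near infinity $\textup{St}_{\mes^\kappa}(x) = \sum_{j\geq 0} M_j(\mes^\kappa) x^{-(j+1)}$. It therefore suffices to show that the coefficient of $x^{-(j+1)}$ in $\log(z^\kappa(x))$ equals $M_j(\mes^\kappa)$. I extract this coefficient by a contour integral around infinity, $\frac{1}{2\pi\ii}\oint \log(z^\kappa(x))\, x^j\, dx$, and perform the substitution $x = G(z)$, $dx = G'(z)\,dz$; as $x$ runs over a large positively oriented circle, $z = z^\kappa(x)$ runs clockwise around $1$ (because $G(z)\sim (z-1)^{-1}$). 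Writing $G(z)^j G'(z) = \frac{1}{j+1}\frac{d}{dz}G(z)^{j+1}$ and integrating by parts, the total-derivative term $\frac{d}{dz}(\log z \cdot G(z)^{j+1})$ integrates to zero over the closed contour, leaving $-\frac{1}{j+1}\oint \frac{G(z)^{j+1}}{z}\,dz$. After accounting for the reversal of orientation, this is precisely $\frac{1}{2(j+1)\pi\ii}\oint_1 \frac{G(z)^{j+1}}{z}\,dz = M_j(\mes^\kappa)$, the formula of Proposition \ref{ourmoment_convergence}.

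The main obstacle is bookkeeping rather than any deep difficulty: I must correctly relate the inverse of the Stieltjes transform to the inverse $S^{(-1)}_{\mes_\omega}$ of the moment generating series, and I must track orientations and the residue at $z=0$ through the change of variables and integration by parts so that all signs and the factor $1/(j+1)$ land exactly as in \eqref{eq_limit_moments_A}. Justifying that the contour manipulations are legitimate (single-valuedness of $\log z$ on a small loop around $1$ avoiding $0$, together with the genuine convergence of $z^\kappa(x)$ and of $\F_\kappa(z,t(z))$ asserted in the preceding discussion) is the only analytic point needing care.
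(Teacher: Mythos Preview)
Your proposal is correct and follows essentially the same route as the paper: both arguments rest on the identity $\F_\kappa(z,t(z))=zQ'(z)+\tfrac{z}{z-1}$ obtained from \eqref{eq_H_derivative}, followed by a contour-integral manipulation with an integration by parts. The only cosmetic difference is direction: the paper sums $\sum_j M_j(\mes^\kappa)x^{-(j+1)}$ into $-\frac{1}{2\pi\ii}\oint_1 \frac{dz}{z}\log(1-\F_\kappa/x)$, integrates by parts, and evaluates the residue at $z^\kappa(x)$, whereas you extract the $x^{-(j+1)}$-coefficient of $\log z^\kappa(x)$ and change variables $x=G(z)$ to recover the moment integral---these are the same computation read forwards versus backwards.
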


\begin{proof}

We know from Proposition \ref{ourmoment_convergence} that the $j$-th moment of measure $\mes^\kappa$ can be computed as
$$ M_j (\mes^\kappa) = \frac{1}{2 (j+1) \pi \ii} \oint_{1} \frac{d z}{z} \left(
 \frac{z}{(1-\ka)}\left (\H'_{\mes_\omega}(z)
+\frac{\kappa}{1 + z}\right )+  \frac{z}{z-1} \right)^{j+1},
$$
where the contour of integration is a circle of radius $\epsilon \ll 1$ around $1.$

Using the formula (\ref{eq_H_derivative}) 
$$\H'_{\mes_\omega} (z) = \frac{1}{u S_{\mes_\omega}^{(-1)} (\log z)} - \frac{1}{z -1}.$$
for $\H'_{\mes_\omega} $ we get

$$\frac{z}{(1-\ka)}\left (\H'_{\mes_\omega}(z)
+\frac{\kappa}{1 + z}\right)+  \frac{z}{z-1} =\F_\kappa(z, t(z)),$$
where $t(z)=\textup{St}^{(-1)}_{\mes_\w}(\log z).$

Now we can compute the Stieltjes transform for $x$ in the neighborhood of infinity as
$$
 \sum_{j=0}^{\infty} \frac{M_j(\mes^\kappa)}{x^{j+1}} = -\frac{1}{2
   \pi \ii} \oint_{1} \frac{dz}{z} \log \left( 1 - \frac{\F_\ka(z, t(z))}{x} \right).
$$

Integrating by parts we get
$$
\sum_{j=0}^{\infty} \frac{M_j(\mes^\kappa)}{x^{j+1}}= \frac{1}{2 \pi
  \ii} \oint_{1} \log(z) \frac{\pa_z \left( 1 - \frac{\F_{\ka}(z,
      t(z))}{t} \right)}{1 - \frac{\F_{\ka}(z, t(z))}{x}}.
$$

The integrand has poles at roots of $\F_{\ka}(z, t(z))=x.$ For $x$ large
  enough the only root inside the region of integration is given by a
  unique composite inverse series of the Laurent series $\F_{\ka}(z, t(z))$ for $z$ in the
neighborhood of $1.$
 We need to compute the residue
to obtain the final result. Thus, we get
$$\textup{St}_{\mes^\kappa}(x)=\log(z^{ \kappa}(x)) $$
for $x$ in the neighborhood of infinity. Another way to justify the last step is to use the notions of residues and integration by parts for formal power series rather than integrals.
\end{proof}

 \subsection{The density of $\mes^\kappa$}

Let us define a function
$${\bold
  Z}^{ \kappa}(x)=\exp(\textup{St}_{\mes^\kappa}(x)), $$
where $x\in \mathbb C \setminus \textup{Support}(\mes^\kappa).$

Note that due to Lemma \ref{series}
\begin{equation} \label{eq:root}
\F_{\kappa}({\bold
  Z}^{\kappa}(x), t({\bold
  Z}^{\kappa}(x)))-x=0
\end{equation}
for $x$ in the neighborhood of
infinity. Since the left-hand side of this equation is an analytic
function on its domain of definition,
the equality holds for any $x\in \mathbb C \setminus \textup{Support}(\mes^\kappa).$
Therefore, ${\bold
  Z}^{ \kappa}(x)$ is one of the roots of (\ref{sys}). The system of equations (\ref{sys}) may have several roots. A certain effort is necessary in order to determine which of them should be chosen. We deal with this problem in the rest of this section.

Let us recall a well-known fact about Stieltjes transform of a
measure.
\begin{lem} \label{support}
If a measure $\eta$ has a continuous density $f(x)$ with respect to the
Lebesgue measure then the following formula is valid
$$f(x)=-\lim\limits_{\varepsilon\rightarrow
  0^+}\frac{1}{\pi}\textup{Im}(\textup{St}_{\eta}(x+i\varepsilon)).$$
\end{lem}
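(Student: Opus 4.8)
The plan is to prove this classical Stieltjes--Perron inversion formula by a direct computation combined with a standard approximate-identity argument. First I would substitute $t = x + i\varepsilon$ into the defining integral $\textup{St}_\eta(t) = \int_{\mathbb R}\eta[ds]/(t-s)$ and, using that $\eta$ has density $f$, rationalize the integrand:
\begin{equation*}
\textup{St}_\eta(x+i\varepsilon) = \int_{\mathbb R}\frac{(x-s) - i\varepsilon}{(x-s)^2+\varepsilon^2}\,f(s)\,ds.
\end{equation*}
Extracting the imaginary part gives immediately
\begin{equation*}
-\frac{1}{\pi}\textup{Im}\bigl(\textup{St}_\eta(x+i\varepsilon)\bigr) = \int_{\mathbb R} P_\varepsilon(x-s)\,f(s)\,ds, \qquad P_\varepsilon(u) := \frac{1}{\pi}\frac{\varepsilon}{u^2+\varepsilon^2}.
\end{equation*}
Thus the quantity on the right-hand side of the lemma is exactly the convolution $(P_\varepsilon * f)(x)$ of $f$ against the Cauchy (Poisson) kernel.

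Second, I would verify that $\{P_\varepsilon\}_{\varepsilon>0}$ is an approximate identity. Each $P_\varepsilon$ is nonnegative; a change of variables $u = \varepsilon v$ gives $\int_{\mathbb R} P_\varepsilon(u)\,du = 1$ for every $\varepsilon$; and for each fixed $\delta > 0$ one has $\int_{|u|>\delta} P_\varepsilon(u)\,du \to 0$ as $\varepsilon \to 0^+$, since the mass of the kernel concentrates at the origin. These three properties are precisely what makes convolution against $P_\varepsilon$ recover point values of continuous functions.

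Finally, I would invoke the standard convergence theorem for approximate identities: writing $(P_\varepsilon * f)(x) - f(x) = \int_{\mathbb R} P_\varepsilon(u)\bigl(f(x-u) - f(x)\bigr)\,du$ and splitting the integral into the region $|u| \le \delta$ and its complement, the first piece is small by continuity of $f$ at $x$ (choose $\delta$ so that $|f(x-u)-f(x)|$ is small there, using $\int P_\varepsilon = 1$), while the second piece is small because $P_\varepsilon$ has vanishing tail mass and $f$ is bounded --- in our setting the density even takes values in $[0,1]$. Letting $\varepsilon \to 0^+$ yields $(P_\varepsilon * f)(x) \to f(x)$, which is the assertion. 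The only step requiring genuine care is the control of the tail contribution, and this is handled cleanly by the boundedness of $f$ together with the concentration property of the Cauchy kernel.
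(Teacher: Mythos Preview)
Your proof is correct and is the standard Poisson-kernel / approximate-identity argument for the Stieltjes--Perron inversion formula. The paper itself does not give a proof of this lemma at all: it simply introduces it with the phrase ``Let us recall a well-known fact about Stieltjes transform of a measure'' and states the formula without further justification. So there is nothing to compare against; you have supplied a clean proof where the paper only cites the result.
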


\begin{thm}  \label{density}
The density
of $\mes^\kappa$ is given by
\begin{equation} \label{eq:density_formula}
{\bold d} \mes^\kappa(x)=\frac{1}{\pi}\textup{Arg}( {\bold
  z}^\kappa_+(x)),
\end{equation}
where ${\bold z}^\kappa_+(x)$ is the unique complex root of the system
$($\ref{sys}$)$ which belongs to the upper half-plane. This
formula is valid for such $(x, \kappa)$ that the complex root
exists, the density is equal to zero or one otherwise.
\end{thm}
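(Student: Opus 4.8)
The plan is to combine Lemma \ref{series}, which identifies $\textup{St}_{\mes^\kappa}(x) = \log(z^\kappa(x))$ near infinity, with the inversion formula of Lemma \ref{support}. First I would note that by Lemma \ref{series} together with the analytic continuation argument preceding equation \eqref{eq:root}, the function ${\bold Z}^\kappa(x) = \exp(\textup{St}_{\mes^\kappa}(x))$ is a root of the system \eqref{sys} throughout $\mathbb{C} \setminus \textup{Support}(\mes^\kappa)$. The key relation is $\textup{St}_{\mes^\kappa}(x) = \log({\bold Z}^\kappa(x))$, so taking the principal branch of the logarithm gives $\textup{Im}(\textup{St}_{\mes^\kappa}(x)) = \textup{Arg}({\bold Z}^\kappa(x))$. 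Applying Lemma \ref{support} then yields, for $x$ on the support where the density is continuous,
$$
{\bold d}\mes^\kappa(x) = -\frac{1}{\pi}\lim_{\varepsilon \to 0^+} \textup{Im}\bigl(\textup{St}_{\mes^\kappa}(x + i\varepsilon)\bigr) = -\frac{1}{\pi}\lim_{\varepsilon \to 0^+} \textup{Arg}\bigl({\bold Z}^\kappa(x+i\varepsilon)\bigr).
$$
The sign and the identification of the correct root remain to be sorted out.

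Next I would analyze the boundary behavior as $\varepsilon \to 0^+$. The density of $\mes^\kappa$ takes values in $[0,1]$ (this should follow as it does for $\mes_\omega$, since $\mes^\kappa$ arises from the same particle-system framework and its distribution function is a scaled height function by Lemma \ref{height_formula}), so the imaginary part of $\textup{St}_{\mes^\kappa}$ is bounded, forcing $\textup{Arg}({\bold Z}^\kappa)$ to lie in a bounded range. On the liquid region, where the density is strictly between $0$ and $1$, the root ${\bold z}^\kappa_+(x) := \lim_{\varepsilon \to 0^+}{\bold Z}^\kappa(x+i\varepsilon)$ will be a genuinely complex number in the upper half-plane (the argument being positive matches the negative sign convention once one checks orientation), and this is precisely the root of \eqref{sys} singled out in the statement. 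Where the density saturates at $0$ or $1$, the root ${\bold Z}^\kappa$ stays real (or approaches a real boundary value with argument $0$ or $\pi$), which is the frozen regime; this gives the ``equal to zero or one otherwise'' clause. I would verify that $\textup{Arg}$ is continuous and well-defined here by using that $z \in \mathbb{C} \setminus \mathbb{R}_-$ in the system, so the principal branch is unambiguous.

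The main obstacle is establishing \textbf{uniqueness} of the root in the upper half-plane and confirming it is exactly the analytic continuation of ${\bold Z}^\kappa(x)$ from infinity. The system \eqref{sys} is a coupled pair of equations in $(z,t)$, and eliminating $t$ via the second equation $\textup{St}_{\mes_\omega}(t) = \log z$ produces a single equation $\F_\kappa(z, t(z)) = x$ whose root count is not a priori one. I would argue that for $x$ with large imaginary part the composite-inverse construction from Lemma \ref{series} gives a unique root near infinity, and then track this root by continuity/monotonicity as $x$ descends toward the real axis, using that $\textup{St}_{\mes^\kappa}$ is a Stieltjes transform (hence maps the upper half-plane to the lower half-plane, which pins down the imaginary part's sign and excludes spurious roots). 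Proving that no second upper-half-plane root can coincide with a valid Stieltjes transform value is the delicate point, and I expect it to rest on the structure of $\F_\kappa$ together with the fact that $\textup{St}_{\mes_\omega}$ already has this univalence property on its own liquid region. Finally I would reconcile signs so that ${\bold d}\mes^\kappa(x) = \frac{1}{\pi}\textup{Arg}({\bold z}^\kappa_+(x))$ with a plus sign, which amounts to checking that $\textup{Arg}({\bold z}^\kappa_+(x)) \in (0,\pi)$ corresponds to a density in $(0,1)$ under the correct orientation of the limit $\varepsilon \to 0^+$.
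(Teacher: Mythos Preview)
Your overall shape --- use Lemma \ref{series} to identify ${\bold Z}^\kappa(x)=\exp(\textup{St}_{\mes^\kappa}(x))$ as a root of \eqref{sys}, then invoke Lemma \ref{support} --- matches the paper. But the part you flag as ``the main obstacle'' is exactly where your plan falls short, and the paper's actual argument is quite different from what you sketch.

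\textbf{Uniqueness.} You propose to track the root from infinity by continuity and to use that Stieltjes transforms send $\mathbb H$ to the lower half-plane to ``exclude spurious roots''. This does not prove uniqueness: knowing that ${\bold Z}^\kappa$ is \emph{one} complex root says nothing about whether the system \eqref{sys} might admit a second, unrelated complex root. The paper instead first restricts to $\mes_\omega$ uniform on a finite union of $s$ intervals, where the system reduces to a polynomial equation of degree $2s+1$. A careful interlacing analysis (Lemma \ref{root}) shows that $G(z,x)=z$ has at least $2s-1$ real roots, one on each interval between consecutive poles, leaving room for at most one conjugate pair of complex roots. The general $\mes_\omega$ is then handled by approximating it with such measures and passing to the limit in \eqref{eq:density_formula}. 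Nothing in your proposal supplies this root-counting step.

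\textbf{Identifying the correct root.} Even granting uniqueness, you still need to show that as $\varepsilon\to 0^+$ the analytic continuation ${\bold Z}^\kappa(x+i\varepsilon)$ converges to the complex root rather than to one of the real roots $z_j(x)$ (which also deform holomorphically in $x$). The paper handles this by computing the sign of $z_j'(x)$ via implicit differentiation, showing $\textup{Im}\,z_j(x_0+i\varepsilon)>0$ for each real root, whereas $\textup{Im}\,{\bold Z}^\kappa(x_0+i\varepsilon)<0$ directly from the integral representation of the Stieltjes transform. This sign discrepancy forces ${\bold Z}^\kappa$ onto the complex root. Your proposal does not address this step.

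\textbf{Minor point.} Your identification ${\bold z}^\kappa_+(x):=\lim_{\varepsilon\to 0^+}{\bold Z}^\kappa(x+i\varepsilon)$ lands in the \emph{lower} half-plane, not the upper one (since $\textup{Im}\,\textup{St}_{\mes^\kappa}(x+i\varepsilon)<0$). The upper-half-plane root ${\bold z}^\kappa_+$ in the statement is the conjugate of this limit; that is what produces the $+$ sign in $\frac{1}{\pi}\textup{Arg}({\bold z}^\kappa_+)$.
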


\begin{proof}  We will first prove this statement for a special case of measure $\mes_\omega.$ Let us fix a natural number $s$ and let $(a_1, a_2, \dots, a_s)$ and
 $(b_1, b_2, \dots, b_s)$ be two $s$-tuples of real numbers such that
 $$ a_1< b_1<a_2<b_2\dots <a_s<b_s \text{ and } \sum\limits_{i=1}^s(b_i-a_i)=1.$$
 Assume that $\mes_\omega$ is a uniform measure on the union of the intervals $[a_i, b_i].$

Then we can compute $$\textup{St}_{\mes_\omega}(t)=\log\frac{(t-a_1) \cdots (t-a_s )}{(t-b_1) \cdots
  (t-b_s)}.$$ Let us fix $\kappa\in (0,1).$

Throughout the proof we will use the following fact. Consider a
function
$$R(z)=\frac{(z-u_1)(z-u_2)\cdots (z-u_s)}{(z-v_1)(z-v_2)\cdots
  (z-v_s)},\quad z\in \mathbb R;$$ where $\{u_i\}$ and $\{v_i\}$ interlace. Then $R(z)$ is strictly piecewise
monotone between its singularities. It can be proved by induction using the decomposition of
$R(z)$ into the sum of simple fractions.
\begin{lem}  \label{root}
In the case of the  uniform measure on the union of the intervals
$[a_i, b_i]$ the system of equations $($\ref{sys}$)$ for $x\in \mathbb R$ has at most one pair of complex conjugate solutions.
\end{lem}
\begin{proof}

Let us express $t$ as a function of $x,$ $\kappa$ and $z$ from the first equation of (\ref{sys})
\begin{equation} \label{t}
t(z,\ka, x)=x (1-\ka )+\frac{2\ka z}{  z^2-1}
\end{equation}
 and substitute it into the second one. Let us fix $x.$ We claim that the resulting
equation (\ref{neww}) will have at most one pair of complex conjugate solutions:
\begin{equation}\label{neww}
G(z, x)=z,
\end{equation}
where
 \begin{multline}
G(z, x)=\frac{(2\ka z+x (1-\ka)(z^2-1)-a_1 (z^2-1) ) \cdots (2\ka z+x (1-\ka)(z^2-1)-a_s(z^2-1) )}{ (2\ka z+x (1-\ka)(z^2-1)-b_1(z^2-1)) \cdots (2\ka z+x (1-\ka)(z^2-1)-b_s(z^2-1))}.
\end{multline}

Let us denote the numerator of $G(z, x)$ by $G_a(z, x)$ and its denominator by $G_b(z, x).$

Let us compute the discriminant of the equation
$$z^2(1-\ka x)+2\ka z+(\ka x-x)-(z^2-1) y =0,$$
which is equivalent to $t(z, x,\kappa)=y.$

It is equal
to $$D=y^2+(2\ka x-2 x)y+(\ka^2+x^2+\ka^2 x^2-2\ka x^2)=(y+(\ka x-x))^2+\kappa^2\geq
0.$$
 Therefore, both polynomials  $G_{a}(z, x)$ and $G_b(z, x)$ have only real
 roots, moreover, since $\ka>0$ all their roots are distinct. Let
 $\{\alpha_1, \dots, \alpha_{2s}\}$ be the ordered set of roots of polynomial
 $G_{a}(z,x)$ and let  $\{\beta_1, \dots, \beta_{2s}\}$ be the ordered set of roots of polynomial
 $G_{b}(z,x).$

We claim that the roots of $G_a(z,x)$ and $G_b(z,x)$ interlace.

The function $t(z, x, \ka)$ is decreasing on the intervals
$(-\infty, -1),$ $(-1, 1)$ and $(1,\infty).$ It follows from the
computation of its derivative which is equal to $-\frac{2
  \ka (1+z^2)}{(z^2-1)^2}$ (recall $\ka\in (0,1)).$

 \begin{figure}[h]
\includegraphics[width=0.5\linewidth]{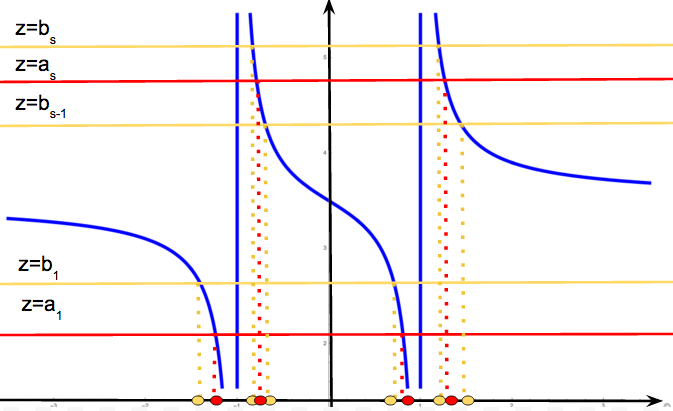}
\caption{An example of the graph of $y=t(z, x, \ka).$}
 \label{fig:grafik}
\end{figure}

Thus, since $a_i$
and $b_i$ interlace the solutions of equations $t(z, x, \ka)=a_i$ and
$t(z, x, \ka)=b_i$ will also interlace on each interval of
monotonicity. Looking at a general graph of $t(z, x, \ka)$ and examining the cases
 we will conclude that the interlacing condition is preserved on the whole line,
see Figure \ref{fig:grafik}.  Indeed, let us consider the interval $(-\infty, -1).$
Let the lines $y=a_i$ and $y=b_i$ intersect the graph of $t(z, x,
\ka)$ on this interval. Then the right-most point will be the
intersection with the line  $y=a_1.$  Consider the interval $(-1, 1).$
Then the left-most point of intersection will be the
intersection with the line  $y=b_s$ and the right-most point of intersection will be the
intersection with the line  $y=a_1.$ Finally, consider the interval $(1, \infty).$
Let the lines $y=a_i$ and $y=b_i$ intersect the graph of $t(z, x,
\ka)$ on this interval then the left-most point of intersection will be the
intersection with the line  $y=b_s.$

The leading coefficient of $G_a(z, x)$ is
$\prod\limits_{i=1}^{2s}(x(1-\ka)-a_i)$ and the leading coefficient of $G_b(z,
x)$ is $\prod\limits_{i=1}^{2s}(x(1-\ka)-b_i).$  First, assume that
$x(1-\ka)-b_i\neq 0$ for $i=1,\dots, 2s.$ Since $\{\alpha_i\}$ and $\{\beta_i\}$ interlace the function $G(z,x)$ is always piecewise monotone in $z$. Examining the cases in Figure \ref{fig:grafik} we conclude that
$$ \text{ if } \alpha_1<\beta_1, \text{ then } \frac{\prod\limits_{i=1}^{2s}(x(1-\ka)-a_i)}{\prod\limits_{i=1}^{2s}(x(1-\ka)-b_i)}<0, \text{ and }$$
$$ \text{ if } \alpha_1>\beta_1, \text{ then } \frac{\prod\limits_{i=1}^{2s}(x(1-\ka)-a_i)}{\prod\limits_{i=1}^{2s}(x(1-\ka)-b_i)}>0 .$$

Thus, $G(z, x)$ is increasing on every interval $(\beta_i, \beta_{i+1})$ from $-\infty$ to $\infty.$ There are $2s-1$ such intervals and on every interval there is at least one real solution to $($\ref{neww}$),$ see Figure \ref{fig:equation}.
Since the degree of the corresponding polynomial equation is $2s+1$
there exists at most one pair of complex conjugate roots.

 \begin{figure}[h]
\includegraphics[width=0.5\linewidth]{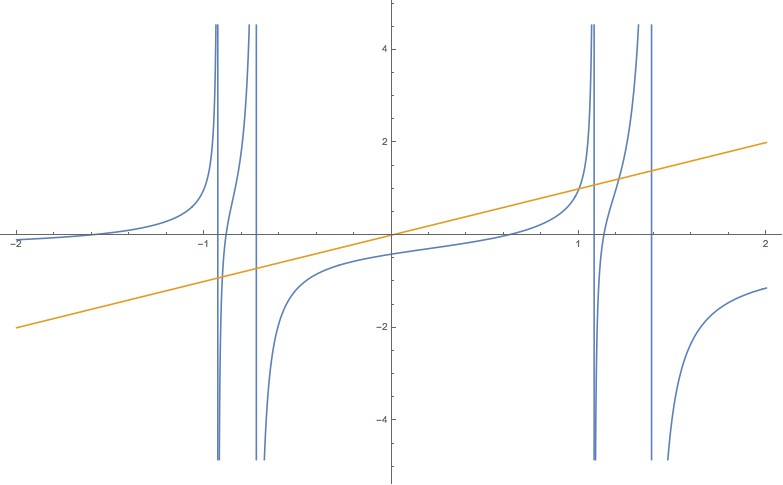}
\caption{An example of the graph of $y=G(z,x_0)$ and its intersection with $y=z.$}
\label{fig:equation}
\end{figure}
\end{proof}

Consider the case when $x(1-\ka)-b_i=0$ for some $i.$ Examining
the  Figure \ref{fig:grafik} we see that $
\alpha_1<\beta_1<\dots<\beta_{2s-1}<\alpha_s.$ Note that
$$\frac{\prod\limits_{j=1}^{2s}(b_i-a_j)}{2\ka\prod\limits_{j=1,
    j\neq i}^{2s}(b_i-b_j)}<0.$$
Similarly it can be shown that $G(z, x)$ is piecewise increasing. We again can
find $2s-2$ roots on every interval $(\beta_i, \beta_{i+1}).$ Since the degree of the corresponding polynomial equation is $2s$
there exists at most one pair of complex conjugate roots.
\end{proof}

Let $x_0$ be such that $($\ref{neww}$)$ has a pair of complex
roots. From the proof of Lemma \ref{root} we see that
$($\ref{neww}$)$ has $2s-1$ distinct real roots when $x(1-\ka)-b_i\neq
0$ for $i=1,\dots, 2s$ and $2s-2$ distinct real roots when $x(1-\ka)-b_i=
0$ for some $i.$ Due to the Implicit Function Theorem those roots are
well-defined in some complex neighborhood $\mathcal U$ of $x$. Let us
denote them $z_j(x)$ (where $j=1,\dots, 2s-1$ or $j=1,\dots, 2s-2$
corresponding to two cases mentioned above.)

\begin{lem} The derivative of $z_j(x)$ with respect to $x$ at $x_0$ is non-negative, for $i=1, \dots 2s-1.$ Moreover, it is equal to zero
if and only if $z_j(x_0)=1.$
\end{lem}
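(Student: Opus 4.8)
The plan is to reduce the system (\ref{sys}) to the single scalar equation (\ref{neww}) and differentiate it implicitly in $x$. Solving the first equation for $t$ gives (\ref{t}), namely $t=t(z,\ka,x)=x(1-\ka)+\frac{2\ka z}{z^2-1}$, so along any branch of solutions the function $\Phi(z,x):=G(z,x)-z$ vanishes, where $G(z,x)=\exp\!\big(\textup{St}_{\mes_\omega}(t(z,\ka,x))\big)$. Writing $\psi(t):=\textup{St}'_{\mes_\omega}(t)=-\sum_{i=1}^{s}\frac{b_i-a_i}{(t-a_i)(t-b_i)}$ and using $\pa_x t=1-\ka$, $\pa_z t=-\frac{2\ka(z^2+1)}{(z^2-1)^2}$ (the $z$-derivative already computed in the proof of Lemma \ref{root}), together with $\pa_x G=G\,\psi(t)(1-\ka)$ and $\pa_z G=G\,\psi(t)\,\pa_z t$, the identity $\frac{dz_j}{dx}=-\pa_x\Phi/\pa_z\Phi$ yields at a root $z=z_j(x_0)$ (where $G=z_j$)
\[
\frac{dz_j}{dx}=-\,\frac{z_j\,\psi(t)\,(1-\ka)}{\Phi'(z_j)},\qquad \Phi'(z_j)=z_j\,\psi(t)\,\pa_z t-1 .
\]
Equivalently, in reciprocal form, $(1-\ka)\frac{dx}{dz}=\frac{1}{z\,\psi(t)}+\frac{2\ka(z^2+1)}{(z^2-1)^2}$, which is the expression whose sign I must pin down.

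I would first settle the equality case, which is the cleanest part. Since $t(1,\ka,x)=\infty$ and $G(1,x)\equiv1$, the value $z=1$ is a root of (\ref{neww}) for every $x$; as $t\to\infty$ one has $\psi(t)\to0$, so the numerator $z_j\psi(t)(1-\ka)$ vanishes while $\Phi'(1)=\tfrac{1-\ka}{\ka}\neq0$, giving $\frac{dz_j}{dx}=0$ at $z_j=1$. Conversely, as in the proof of Lemma \ref{root}, $\psi$ has no finite real zero off the support of $\mes_\omega$ (each summand $\frac{b_i-a_i}{(t-a_i)(t-b_i)}$ is positive there), so the numerator can vanish at a root only through $\psi(t)=0$, i.e. $t=\infty$, i.e. $z_j=1$. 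This establishes ``$\frac{dz_j}{dx}=0$ iff $z_j=1$'' once the inequality $\frac{dz_j}{dx}\ge0$ is known.

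The main obstacle is the sign of the bracket for $z_j\neq1$, equivalently showing $\Phi'(z_j)>0$, that is $G'(z_j)>1$, at every such root. My plan is to extract this from the graph analysis already used to prove Lemma \ref{root}. One first checks that the roots in question are real with $t(z_j)$ off the support, so that $\psi(t)<0$ and, from $G=z_j$, $z_j>0$; the bracket then reduces to the single inequality $2\ka\,z_j(z_j^2+1)\,(-\psi(t))\ge(z_j^2-1)^2$. Since at $x_0$ the $2s-1$ real roots are simple and, by the interlacing of $\{\alpha_i\}$ and $\{\beta_i\}$, the curve $y=G(z,x_0)$ meets the line $y=z$ transversally exactly once on each pole-interval $(\beta_i,\beta_{i+1})$ (Figure \ref{fig:equation}), the orientation of this unique crossing forces $\Phi'(z_j)=G'(z_j)-1>0$.

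Making the ``examine the cases'' step of Lemma \ref{root} quantitative enough to read off this orientation --- equivalently, verifying the displayed inequality directly from the interlacing together with the discriminant computation already performed there --- is the delicate point. Once $\Phi'(z_j)>0$ is secured, the sign of $\frac{dz_j}{dx}$ is that of $-z_j\psi(t)(1-\ka)$, which is nonnegative because $z_j>0$, $\psi(t)<0$ and $1-\ka>0$; the remaining bookkeeping and the invocation of the Implicit Function Theorem to propagate these facts to the complex neighborhood $\mathcal U$ of $x_0$ are then routine.
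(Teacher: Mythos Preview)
Your approach mirrors the paper's: implicitly differentiate $G(z,x)=z$ to get $z_j'(x)=-G'_x/(G'_z-1)$, establish $G'_z(z_j,x_0)>1$ from the fact that on each pole-interval $(\beta_i,\beta_{i+1})$ the function $G(\cdot,x_0)$ runs from $-\infty$ to $+\infty$ and meets the line $y=z$ exactly once, and handle the $z=1$ equality case by a limiting computation. All of that is fine and matches the paper.

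The gap is in your numerator argument. You assert that at every real root $t(z_j)$ lies \emph{off} the support of $\mes_\omega$, whence $\psi(t)<0$ and $z_j=\exp(\textup{St}_{\mes_\omega}(t))>0$. This is false: roots with $z_j<0$ do occur (for instance on pole-intervals contained in $(-\infty,-1)$), and there $G(z_j)=z_j<0$ forces $f(t(z_j))<0$, i.e.\ $t(z_j)\in(a_l,b_l)$ for some $l$, \emph{on} the support; at such points $\psi(t)>0$, not $<0$, and your exponential representation of $G$ is not even valid. The desired inequality $z_j\psi(t)\le 0$ is nevertheless true, but for the reason the paper actually uses: writing $G(z,x)=f(t)$ with $f(y)=\prod_i(y-a_i)/(y-b_i)$ and $t=t(z,\kappa,x)$, one has $G'_x=(1-\kappa)f'(t)$, and $f$ is strictly decreasing on every interval between consecutive poles because its partial-fraction expansion $f(y)=1+\sum_i c_i/(y-b_i)$ has all $c_i>0$ by the interlacing of $\{a_i\}$ and $\{b_i\}$. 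This gives $G'_x<0$ for $z\neq\pm1$ directly, with no case analysis on where $t(z_j)$ sits relative to the support.
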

\begin{proof}
Since $z_i(x)$ is given as an implicit function, its derivative can be computed in the following way
\begin{equation}
z_i'(x)=-\frac{G'_x(z, x)}{G'_z(z, x)-1}.
\end{equation}
First, we show that $G'_x(z, x)\leq 0.$ Recall that
$$ G(z, x)=\frac{(x (1-\ka)+\frac{2\ka z}{z^2-1 }-a_1) \cdots (x (1-\ka)+\frac{2\ka z}{z^2-1 }-a_s )}{ (x (1-\ka)+\frac{2\ka z}{z^2-1 }-b_1) \cdots (x (1-\ka)+\frac{2\ka z}{z^2-1 }-b_s)}.$$
Thus, when $z\neq \pm 1,$ the statement of the lemma is equivalent to the statement that function
$$f(y)=\frac{(y-u_1)\cdots (y-u_s)}{(y-v_1)\cdots (y-v_s)}$$ is a piecewise decreasing function,
where $u_1<v_1<u_2<\dots<u_n<v_n.$ Thus, it follows that $G'_x(z, x)< 0$ for $z\neq \pm 1.$ Note that $z=-1$ can never be a solution to $($\ref{sys}$)$ and when $z=1$ we have $G'_x(z, x)=0.$

Next, we show that $G'_z(z_i(x_0), x_0)>1.$
From the proof of Lemma \ref{root} we know that $G(z, x_0)$ is an
increasing function from $-\infty$ to $\infty$ on every interval
$(\beta_i, \beta_{i+1}).$ Since $x_0$ is such that $($\ref{neww}$)$ has a pair of complex
roots we conclude that there is exactly one real
solution $z_j(x_0)$ to $($\ref{neww}$)$ on every interval
$(\beta_i, \beta_{i+1}).$
Therefore, $G'_z(z_j(x_0), x_0)>1$ since $G(z,x)-z$ goes from $-\infty$ to $\infty$ on $(\beta_i, \beta_{i+1})$ and has exactly one root.

Now let us consider $\tilde x=x_0+i\varepsilon \in \mathcal U,$ for
some very small $\varepsilon.$ From the sign of derivative we obtain
$\textup{Im}(z_i(\tilde x))>0.$ On the other hand,
$${\bold Z}^{ \kappa}(\tilde x)=\exp\left (\int\limits_{\mathbb R} \frac{((x_0-s)-i\varepsilon) m[ds]}{(x_0-s)^{2}+\varepsilon^2}\right).$$
It follows that $\textup{Im}({\bold Z}^{ \kappa}(\tilde x))<0.$
Thus, ${\bold Z}^{ \kappa}(x)$ cannot be equal to a real root if the
complex roots exist. Therefore, ${\bold Z}^{ \kappa}(x)$ coincides with the unique complex root with negative imaginary part at $x_0.$

Finally, from Lemma \ref{support} we conclude that
$$
{\bold d} \mes^\kappa(x)=-\lim\limits_{\varepsilon\rightarrow
  0^+}\frac{1}{\pi}\textup{Im}(\textup{St}_{\mes^\kappa}(x+i\varepsilon))=-\lim\limits_{\varepsilon\rightarrow
  0^+}\frac{1}{\pi}\textup{Im}\left(\log\left({\bold Z}^{ \kappa}(x+i\varepsilon)\right)\right)=\frac{1}{\pi}\textup{Arg}( {\bold
  z}^\kappa_+(x)).$$
 for this class of measures.

Now let us consider a general case of a measure $\mes_\omega.$ There
exists a sequence of measures $\{\eta_i\}$ that converges weakly to
$\mes_\omega,$ where $\eta_i$ is a uniform measure with density one on
a sequence of intervals. Then we can pass to the limit in equation \eqref{eq:density_formula}.
\end{proof}

\begin{rem}
Note that the scaling which we use computing $\mes^\kappa$ is
$\frac{1}{(1-\kappa)N},$ while the coordinates of the domain scale
as $\frac{1}{N}.$
\end{rem}

   \begin{defi} \label{liquid_region}
 Let $\mathcal L$ be the set of $(\chi, \kappa)$ inside $\mathcal R$ such that
 the density ${\bold d} \mes^\kappa(\frac{\chi}{1-\kappa})$ is not equal
 to $0$ or $1$. Then $\mathcal L$ is called the liquid region. Its boundary $\partial \mathcal L$ is called the frozen boundary.
 \end{defi}

Note that
the density ${\bold d} \mes^\kappa(\frac{\chi}{1-\kappa})$ is not equal to 0 or 1
if and only if the system (\ref{sys}) has a complex solution for $(\chi, \kappa)\in \mathcal R.$

\section{Frozen boundary}
\label{sec:frozen}
In this section we consider a special case of a rectangular Aztec diamond $\mathcal
R(N, \a, m)$ with
$$\text{ }\a=(A_1, \dots, B_1, A_2,
 \dots, B_2, \dots, A_s, \dots, B_s), \text{  where }
 \sum_{i=1}^s(B_i-A_i+1)=N, \text{ see Figure \ref{fig:boundary}.}$$

Denote the string $(A_1, A_2 \dots, A_s)$ by
 $A^{(s)}$ and the string $(B_1,B_2 \dots, B_s)$ by $B^{(s)}.$
We call such domain a {\it rectangular Aztec diamond of type
  $(N, A^{(s)}, B^{(s)})$}.

  \begin{figure}[h]
\includegraphics[width=0.35\linewidth]{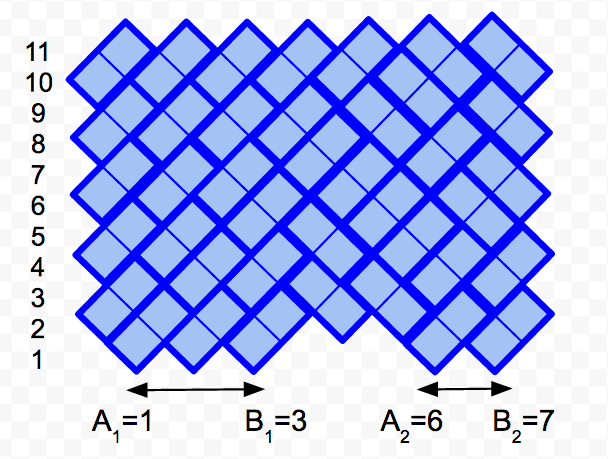}
\caption{Rectangular Aztec diamond $\mathcal
R(5, (1, 2, 3, 6, 7), 2)$ with $A_1=1,$ $B_1=3,$ $A_2=6,$ $B_2=7.$}
  \label{fig:boundary}
\end{figure}

We are interested in the following asymptotic regime of its growth :
 $$\lim_{N\rightarrow \infty} \frac{A_i(N)}{N}=a_i, \text{ }
 \lim_{N\rightarrow \infty} \frac{B_i(N)}{N}=b_i,$$
where $a_1<b_1<\dots<a_s<b_s$ are new parameters such that $\sum\limits_{i=1}^s(b_i-a_i)=1.$
We will call $s$ the number of segments.

The sequence of signatures $\{\omega(N)\}$ is regular and the limit measure $\mes_\omega$ is a uniform measure on the
union of intervals $[a_i, b_i]$ considered in the proof of the
Lemma \ref{root}.

\begin{thm} \label{frozen_b}
The frozen boundary of the limit of a rectangular Aztec diamond of type
$(N, A^{(s)}, B^{(s)})$ is a rational algebraic curve $C$ with an
explicit parametrization \eqref{eq:curveC}. Moreover, its
dual $C^{\vee}$ is of degree $2s$
and is given in the following parametric form
\begin{equation}\label{eq:curve}
C^{\vee}=\left(\theta,
\text{ }  \frac {2\theta \Pi_s(\theta)}{(\Pi_s(\theta)-1)(\Pi_s(\theta)+1)} \right),
\end{equation}
where

$$\Pi_s(\theta)=\frac{(1-a_1\theta)(1-a_2 \theta)\cdots(1-a_s \theta)}{(1-b_1 \theta)(1-b_2
  \theta)\cdots(1-b_s \theta)}.$$
\end{thm}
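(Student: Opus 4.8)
The plan is to realize the frozen boundary as the \emph{envelope of an explicit one–parameter family of straight lines}, which then makes the dual curve $C^{\vee}$ almost readable off the two equations of (\ref{sys}). First I would eliminate $t$ exactly as in the proof of Lemma \ref{root}: the first equation of (\ref{sys}) is linear in $t$ and gives $t = \chi + \tfrac{2\kappa z}{z^2-1}$, where I have used $x(1-\kappa)=\chi$ (the density is evaluated at $x=\chi/(1-\kappa)$). For the uniform measure on $\bigcup_i[a_i,b_i]$ one has $\textup{St}_{\mes_\omega}(t)=\log\frac{\prod_i(t-a_i)}{\prod_i(t-b_i)}$, so the second equation reads $z=\prod_i(t-a_i)/\prod_i(t-b_i)=\Pi_s(1/t)$. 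Substituting, the whole system collapses to a single relation between the parameter $t$ and the coordinates $(\chi,\kappa)$,
$$\chi + \frac{2z(t)}{z(t)^2-1}\,\kappa = t, \qquad z(t):=\Pi_s(1/t),$$
which is the central observation: for each fixed $t$ this is a \emph{line} $\ell_t$ in the $(\chi,\kappa)$–plane.

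Next I would identify the frozen boundary with the real envelope of $\{\ell_t\}$. By Definition \ref{liquid_region} and the remark following it, $(\chi,\kappa)$ is a boundary point exactly when (\ref{sys}) passes from having a non-real solution to having none, i.e.\ when the collapsed equation above acquires a real double root in $t$ (using that, by Lemma \ref{root}, there is at most one complex–conjugate pair). A point lies on the envelope of $\{\ell_t\}$ precisely when the defining relation $F(\chi,\kappa;t)=0$ and $\partial_t F=0$ share a root $t$, that is, when $t$ is a double root; hence the frozen boundary $C$ is exactly this envelope. Solving $F=0,\ \partial_t F=0$ for $(\chi,\kappa)$ in terms of $t$ yields the rational parametrization \eqref{eq:curveC} (with $\kappa=-(z^2-1)^2/\bigl(2z'(z^2+1)\bigr)$, $z'=dz/dt$, and $\chi$ then read off from $\ell_t$), so that $C$ is a rational --- hence algebraic and genus–zero --- curve.

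The dual curve is then immediate: the tangent lines of the envelope $C$ are precisely the members $\ell_t$, so $C^{\vee}$ is the image of $t\mapsto\ell_t$ in the dual plane. Writing $\theta=1/t$ and $z(t)=\Pi_s(\theta)$ and multiplying $\ell_t$ through by $\theta$ puts it in the normalized form $\theta\,\chi+\frac{2\theta\,\Pi_s(\theta)}{\Pi_s(\theta)^2-1}\,\kappa=1$, whose dual coordinates are exactly $\bigl(\theta,\ \tfrac{2\theta\Pi_s(\theta)}{(\Pi_s(\theta)-1)(\Pi_s(\theta)+1)}\bigr)$, giving \eqref{eq:curve}. For the degree I would clear denominators: with $\Pi_s=P/Q$ the second coordinate equals $\frac{2\theta PQ}{(P-Q)(P+Q)}$, and since $P(0)=Q(0)=1$ a common factor $\theta$ cancels between the numerator and the factor $P-Q$ of the denominator. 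After cancellation the second coordinate is a reduced ratio of a degree-$2s$ polynomial over a degree-$(2s-1)$ polynomial, and intersecting the graph $\eta=R(\theta)$ with a generic line gives $\deg C^{\vee}=\max\!\bigl(2s,\,(2s-1)+1\bigr)=2s$.

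The main obstacle I anticipate is not the algebra but the rigorous identification in the second step: one must argue carefully, using the monotonicity and interlacing analysis of Lemma \ref{root} together with the selection of the correct (upper half–plane) root in Theorem \ref{density}, that the transition locus of (\ref{sys}) coincides with the \emph{real} envelope of $\{\ell_t\}$, and that the family $\{\ell_t\}$ sweeps out \emph{all} tangent lines of $C$, so that $C^{\vee}$ is genuinely the projective dual with no spurious or missing branches. The degree bookkeeping --- in particular spotting and cancelling the factor $\theta$, so that the naive degree $2s+1$ drops to $2s$ --- is the other point that requires care.
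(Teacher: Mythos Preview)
Your approach is correct and, for the computation of the dual, genuinely more efficient than the paper's. Both you and the paper identify the frozen boundary via the double-root condition on (\ref{sys}) and obtain the parametrization \eqref{eq:curveC} by solving for $(\chi(t),\kappa(t))$; the paper does this by writing $z^\pm(t,\kappa)=\bigl(\kappa\pm\sqrt{\kappa^2+(t-\chi)^2}\bigr)/(t-\chi)$ and imposing $\Pi_s=z^\pm$ together with $(\log\Pi_s)'=(\log z^\pm)'$, while you phrase the same two conditions as the envelope system $F=0,\ \partial_tF=0$. The real divergence is the last step: the paper plugs \eqref{eq:curveC} into the general dualization formula \eqref{eq:dual_par} and reports ``some lengthy computation'' to reach \eqref{eq:curve}, whereas your observation that the collapsed relation $\chi+\tfrac{2z(t)}{z(t)^2-1}\,\kappa=t$ is a one-parameter pencil of lines makes the dual immediate --- the tangent lines of an envelope \emph{are} the members of the family, so the dual coordinates are read off from $\ell_t$ after dividing by $t$. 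Your route also makes the degree drop from $2s+1$ to $2s$ transparent via the cancellation of $\theta$ in $P-Q$.

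One point to make explicit: Lemma \ref{root} and Theorem \ref{density} are phrased for the equation in $z$, not for your equation in $t$. The transfer is easy --- for a solution $(z,t)$ of the system each coordinate determines the other by a real-coefficient rational map, so $t$ is real iff $z$ is real, and the ``at most one complex-conjugate pair'' statement carries over (the extra root $z=1$ of the paper's degree-$(2s+1)$ equation corresponds to $t=\infty$ and is spurious). You correctly flag this identification as the place requiring care; just include the one-line argument above rather than citing Lemma \ref{root} verbatim.
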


\begin{rem}
In projective geometry a dual curve of a given plane curve $C$ is a
curve in the dual projective plane consisting of the set of lines
tangent to $C$.

If $C$ is given in a parametric form $C=(x, y)$ then the
parametrization of its dual

$C^\vee=(x^\vee, y^\vee)$ can be found by
the following formula:
\begin{equation}\label{eq:dual_par}
\begin{cases}
x^\vee=\frac{y'}{yx'-xy'};\\
y^\vee=-\frac{x'}{yx'-xy'}.
\end{cases}
\end{equation}

We are interested in the dual curve because its parametrization can be
written in a simple form and it also encodes the information about
the tangents.

\end{rem}

\begin{proof}

In the case of a rectangular Aztec diamond of type $(N, A^{(s)}, B^{(s)})$ we can rewrite the system of equations (\ref{sys}) in the following form:
$$ \begin{dcases} \label{systema}
\frac{(t-a_1 )(t-a_2 )\cdots(t-a_s )}{(t-b_1 )(t-b_2)\cdots(t-b_s)}=z,\\
F_\kappa(z)=\frac{z}{(1 - \kappa)}\left (\frac{t}{ z} - \frac{1}{z - 1} +\frac{\kappa}{1 + z}\right)
+ \frac{z}{z - 1} =\frac{\chi}{(1-\kappa)},
\end{dcases} $$

Let us express $z$ from the second equation and plug it into the first
one. We find that
$$z^{\pm}(t, \kappa)=\frac{\kappa\pm\sqrt{\kappa^2+(t-\chi)^2}}{t-\chi}.$$

From Theorem \ref{density} it follows that $(\chi, \kappa)$ belongs to
the frozen boundary if and only if the resulting equation has a double
root.

Let us introduce the following notation $\L\Pi_s(t):=\log \Pi_s(t).$ The condition that the resulting equation has a double root can be
rewritten as the following system
$$ \begin{cases} \label{syst}
\Pi_s(t)=z^{\pm}(t, \kappa), \\
\L\Pi_s(t)'=(\log(z^{\pm}(t, \kappa))))'.
\end{cases} $$

This system can be solved for $\chi$ and $\kappa$ and we find that
\begin{equation} \label{eq:curveC}
\chi(t)=\frac{\Pi_s(t)^2\L\Pi_s(t) t+\L\Pi_s(t)t
  + \Pi_s(t)^2-1}{(\Pi_s(t)^2+1)\L\Pi_s(t)}
\text{ and }
 \kappa(t)=-\frac{(\Pi_s(\theta)^2-1)^2}{2(\Pi_s(t)^2+1)\Pi_s(t)\L\Pi_s(t)}.
\end{equation}

 \begin{figure}[h]
\includegraphics[width=0.35\linewidth]{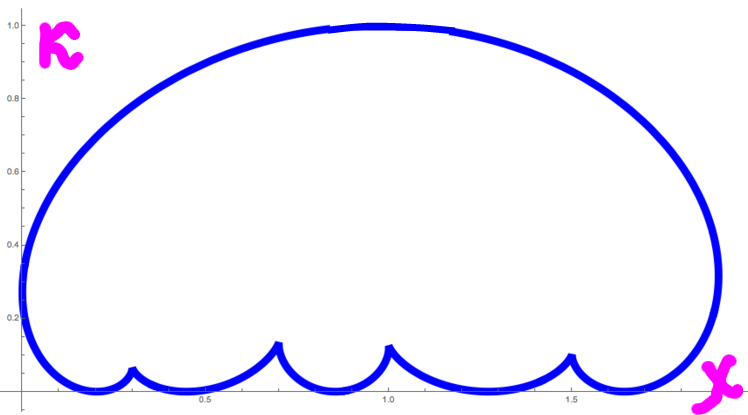}
  \label{fig:cloud}
\caption{An example of a curve $C$ with three boundary segments.}
\end{figure}

This gives us the parametric representation of the frozen boundary. In
order to compute its dual we use the formula \eqref{eq:dual_par}.

Note that $\Pi_s(t)'=\Pi_s\cdot \L\Pi_s(t).$ Using this relation we
get the parametric representation \eqref{eq:curve} of the dual curve
after some lengthy computation, where we introduced the parameter $\theta=\frac{1}{t}.$

From the parametric representation it is easy to see that the degree of the dual
curve is equal to $2s.$

\end{proof}

\begin{defi} \cite{KO} A degree $d$ real algebraic curve $C\subset
  \mathbb R\textup{P}^2$ is winding if
\begin{itemize}
\item It intersects every line $L\subset \mathbb R\textup{P}^2$ in at
  least $d-2$ points counting multiplicity;
\item There exists a point $p_0\in  \mathbb R\textup{P}^2\setminus C$
  called center, such that every line through $p_0$ intersects $C$ in
  $d$ points.

The dual to a winding curve is called a cloud curve.
\end{itemize}
\end{defi}

\begin{prop}
The  frozen boundary $C$ is a cloud curve of rank $2s,$ where $s$ is the
number of segments. Moreover, it is tangent to the following $2s+2$
lines
$$\mathcal L=\{\chi=a_i|i=1,\dots,s\}\cup \{ \chi=b_i|i=1,\dots,s\}\cup
\{\kappa=0\} \cup \{\kappa=1\}.$$
\end{prop}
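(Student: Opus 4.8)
The plan is to base everything on the biduality relation $(C^{\vee})^{\vee}=C$. Since by definition a cloud curve is the dual of a winding curve, it suffices to show that the degree $2s$ curve $C^{\vee}$ of Theorem \ref{frozen_b} is a winding curve; the rank of $C$ is then the degree $2s$ of this winding curve. I would verify the two defining conditions of a winding curve directly, translating each, via projective duality, into a statement about the system \eqref{sys} already analyzed in the previous section. The point is that a line in the dual plane is the dual $L_{(\chi,\kappa)}$ of some point $(\chi,\kappa)$ of the primal plane, and the intersection points of $L_{(\chi,\kappa)}$ with $C^{\vee}$ are exactly the tangent lines to $C$ through $(\chi,\kappa)$, i.e. the roots of \eqref{neww} for that value of $(\chi,\kappa)$.

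For the first condition --- every real line meets $C^{\vee}$ in at least $2s-2$ points --- I would invoke Lemma \ref{root}: for real $(\chi,\kappa)$ the equation \eqref{neww} has at most one pair of complex conjugate roots, so at most two of the $2s$ intersection points of $L_{(\chi,\kappa)}$ with $C^{\vee}$ are non-real, leaving at least $2s-2$ real ones. As $(\chi,\kappa)$ ranges over the real plane these lines exhaust all lines of the dual plane (the remaining, limiting, lines being covered by continuity since the condition is closed), so the first condition holds.

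For the second condition I need a center $p_{0}$ through which every line meets $C^{\vee}$ in $2s$ real points. Dually, the pencil of lines through $p_{0}$ is the family $L_{(\chi,\kappa)}$ with $(\chi,\kappa)$ running over a single primal line $P_{0}=p_{0}^{\vee}$, so it is enough to exhibit a line $P_{0}$ no point of which lies in the liquid region of Definition \ref{liquid_region}. By Theorem \ref{density} a point $(\chi,\kappa)$ lies in the liquid region precisely when \eqref{sys} acquires a complex root; hence for $(\chi,\kappa)$ off the (bounded) liquid region all $2s$ roots are real, and $L_{(\chi,\kappa)}$ meets $C^{\vee}$ in $2s$ real points of $\mathbb{RP}^{2}$. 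Any line $P_{0}$ disjoint from the liquid region, for instance one lying entirely outside the domain $\mathcal R$, provides such a center $p_{0}=P_{0}^{\vee}$. This completes the proof that $C^{\vee}$ is winding of degree $2s$, so $C$ is a cloud curve of rank $2s$.

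It remains to exhibit the $2s+2$ tangent lines, for which I would read off the dual parametrization \eqref{eq:curve}, using that a point $(x^{\vee},y^{\vee})$ of $C^{\vee}$ encodes the tangent line $x^{\vee}\chi+y^{\vee}\kappa=1$. At the $s$ zeros $\theta=1/a_{i}$ of $\Pi_{s}$ one obtains $(x^{\vee},y^{\vee})=(1/a_{i},0)$, the vertical tangent $\chi=a_{i}$; at the $s$ poles $\theta=1/b_{i}$ one has $x^{\vee}=1/b_{i}$ and $y^{\vee}\to0$, giving $\chi=b_{i}$; at $\theta=0$, where $\Pi_{s}(0)=1$, a short expansion gives $(x^{\vee},y^{\vee})\to(0,1)$, the horizontal tangent $\kappa=1$; and at the real roots of $\Pi_{s}(\theta)=\pm1$ with $\theta\neq0$ the dual point escapes to the infinite point $[0:1:0]$, which is the line $\kappa=0$ (equivalently $\kappa(t)$ in \eqref{eq:curveC} has a double zero there). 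This yields exactly the $2s+2$ lines of $\mathcal L$. The step I expect to be most delicate is the duality bookkeeping rather than any new estimate: one must reconcile the naive degree $2s+1$ of \eqref{neww} with the degree $2s$ of $C^{\vee}$ (the parametrization \eqref{eq:curve} has a base point at $\theta=0$), so that ``at most one conjugate pair'' gives the sharp bound $2s-2$ and that the $2s$ real roots for a point off the liquid region really account for all intersections, including those at infinity.
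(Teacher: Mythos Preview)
Your duality idea is sound and, if completed, would give a genuinely different route from the paper's. But two gaps keep it from being a proof as written.

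First, the identification ``intersections of $L_{(\chi,\kappa)}$ with $C^{\vee}$ $=$ roots of \eqref{neww}'' is not quite right, and the degree mismatch you flag at the end is a symptom. The curve $C$ is the envelope of the family of lines $\ell_t:\ p^{\chi,\kappa}(t)=0$ from \eqref{eq:exp_equation}, which is \emph{linear} in $(\chi,\kappa)$; so the tangent lines to $C$ through $(\chi,\kappa)$ are exactly the solutions in $t$, not in $z$. If you rewrite the system as a polynomial in $t$ you get degree $2s$, matching $\deg C^{\vee}$. Equation \eqref{neww} is in $z$ and has degree $2s+1$ because $z=1$ (equivalently $t=\infty$, i.e.\ your base point $\theta=0$) is always a root: indeed $G(1,x)=1$ since every factor in $G_a$ and $G_b$ equals $2\kappa$ at $z=1$. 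Once you discard this extraneous real root you do get the bijection you want, and ``at most one conjugate pair among $2s+1$'' yields ``at least $2s-2$ real among the remaining $2s$''. That computation is the missing step, not the base-point remark alone.

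Second, and more seriously, Lemma \ref{root} (and hence the liquid-region description you invoke) is proved in the paper only for $\kappa\in(0,1)$: the monotonicity of $t(z,\kappa,x)$ in the proof uses $\kappa>0$. Your first winding condition needs the statement for \emph{every} real $(\chi,\kappa)$, and your center argument needs it along an entire line $P_0$, which necessarily contains points with $\kappa\notin(0,1)$. Closing by continuity handles lines at infinity but does not manufacture the inequality for, say, $\kappa<0$. You would have to re-prove Lemma \ref{root} for all real $\kappa$ (which is plausible but is additional work).

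The paper sidesteps both issues by abandoning \eqref{neww} and duality altogether: it writes the implicit equation $y(P_a-P_b)(P_a+P_b)=2xP_aP_b$ of $C^{\vee}$ in the dual coordinates $(x,y)$, then intersects with an arbitrary line $y=cx+d$ and runs an interlacing argument directly on the roots of $P_a\pm P_b$ and $P_aP_b$ (using that $a_i$, $b_i$ interlace). This is independent of any restriction on $\kappa$ and gives the $2s-2$ bound and the center $p_0$ on $\{y=0\}$ in one self-contained computation. Your approach, if patched, would recycle Lemma \ref{root}; theirs redoes the interlacing in new variables. Your reading of the $2s+2$ tangent lines from \eqref{eq:curve} is correct and matches the paper.
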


\begin{proof}
We need to check that the dual curve $C^{\vee}=(x,y)$ is winding. Let us
write down the equation for the dual curve in terms of the coordinates
$x$ and $y.$
Let us denote
$$P_a(x)=(1-a_1x)(1-a_2x)\dots(1-a_sx) \text{ and }
P_b(x)=(1-b_1x)(1-b_2x)\dots (1-b_sx).$$
In terms of $x$ and $y$ we get the following equation of $C^{\vee}$
\begin{equation}
 y(P_a(x)-P_b(x))(P_a(x)+P_b(x))-2xP_a(x)P_b(x)=0
\end{equation}
Note that $P_a(0)=P_b(0)=1$. Thus, the
rank of $C$ is $2s.$

We need to compute the intersection of $C^\vee$
with any given line in $\mathbb R\textup{P}^2.$ Let us start with the lines
given by $y=cx+d,$ where $c, d\in \mathbb R.$ Then the points of intersection satisfy the
following equation
\begin{equation} \label{P}
 (cx+d)(P_a(x)-P_b(x))(P_a(x)+P_b(x))=2xP_a(x)P_b(x).
\end{equation}
We will use the same strategy as in Lemma \ref{root}. We can assume
that $a_1>0$ since we can move the limit rectangle to the right along
the $\chi$ axis,
which does not affect the geometric properties of the frozen boundary.
The roots of the RHS of equation (\ref{P}) are
$\{0, \frac{1}{a_1}, \frac{1}{b_1},\dots , \frac{1}{a_s},\dots,\frac{1}{b_s}\}.$ Since $\{a_i\}$ and $\{b_i\}$
interlace, all the roots of $p_{-}=P_a(x)-P_b(x)$ and $p_{+}=P_a(x)+P_b(x)$ are
real and there are $2s-1$ distinct roots. Moreover, there exist two roots of $p_{\pm}$ in every interval
$(\frac{1}{a_{i+1}}, \frac{1}{a_{i}})$ and the point $\frac{1}{b_i}$ lies in between those
roots. Therefore, the roots of the  polynomial given by the LHS of (\ref{P}) interlace
with the roots of the polynomial given by the RHS with probably an
exception of an interval containing the zero of $cx+d.$ So we have found
at least $2s-2$ real points of intersection of our curve with lines of
the form $y=cx+d.$

Next, consider the lines $x=d.$ The intersection will be at
infinity. We need to consider the homogenized polynomial that defines
our curve adding the third homogeneous coordinate $z$.
$$y(P_a(x, z)-P_b(x, z))(P_a(x, z)+P_b(x, z))-2xP_a(x, z)P_b(x, z)=0,$$
$$P_a(x,z)=(z-a_1x)(z-a_2x)\dots(z-a_sx) \text{ and }
P_b(x,z)=(z-b_1x)(z-b_2x)\dots (z-b_sx).$$
The homogeneous form of the equation of the line is $x=dz.$ Thus, we see
that this line intersects the curve at the point $(0:1:0)$ with
multiplicity $2s-1.$ The case of the line $z=0$ is analogous.

 Note that due to the computation above if we choose a point $p_0$ on
 the line $y=0$ outside the interval $( 0 ,\frac{1}{a_1})$
 any line passing through $p_0$ will intersect the curve $C^{\vee}$ at at least $2s-1$ real
 points. Therefore, any line through $p_0$ will intersect $C^{\vee}$ at
 exactly $2s$ real points. So the point $p_0$ can be chosen to be the
 center.

Notice that the points $(x,y)=(0, \frac{1}{a_i})$ as well as
$(x,y)=(0, \frac{1}{b_i})$ belong to $C^{\vee}.$ The roots of
$(P_a(x)-P_b(x))(P_a(x)+P_b(x))$ correspond to $2s-1$ points of
tangency of $C$ with the line $\kappa=0.$  Also, when $x=0$ the $y$
coordinate of $C^{\vee}$ is $1$, which corresponds to the tangent
$\kappa=1$ of $C.$  From these the second
statement of the proposition follows.

\end{proof}

\begin{rem} From \eqref{eq:curve} and \eqref{eq:curveC}  we see that the points of tangency
  of $C=(\chi(\theta), \kappa(\theta))$ with $\kappa=0,$ that is, with the side of rectangle with nontrivial boundary conditions, are precisely the roots of  $$\prod\limits_{i=1}^{s} \frac{x-a_i}{x-b_i}=\pm 1. $$
\end{rem}

\section{Central Limit Theorem}
\label{sec:6}
In this section we study the fluctuations of the limit shape. In
particular, we show that the fluctuations are described by the
Gaussian Free Field. There are two major steps in the proof of this
result. First, we describe the complex structure on the liquid region
that leads to the appearance of the Gaussian Free Field as the limit object.
Next, we give an overview of the results from \cite{BG2} that allow us
to perform the computations to establish the main result Theorem \ref{theorem:gff-domino}.

\subsection{Characterization of the liquid region}
 By Theorem \ref{density} $(\chi, \kappa) \in \mathcal L$ (see
 definition \ref{liquid_region} and the remark above it) if and only
if the system of equations (\ref{sys}) has complex roots. Let us recall it
$$ \begin{cases} \label{system}
\F_{\kappa}(z{,}t)=\frac{\chi}{(1-\kappa)}, \\
\textup{St}_{\mes_\omega}(t)=\log(z),
\end{cases} $$
where
$$\F_{\ka}(z,t)=\frac{z}{(1-\ka)}\Big (\frac{t}{z}- \frac{1}{(z-1)}+\frac{\kappa}{1 + z} \Big ) + \frac{z}{z - 1}.$$

 From the first equation we find that
  \begin{equation}\label{z}
      z_{\pm}(\chi, \kappa)(t)=\frac{\kappa
        \pm\sqrt{\kappa^2+(t-\chi)^2}}{t-\chi} \text{ and }
      t=\frac{ \chi z^2+2\ka z-\chi}{z^2-1}.
  \end{equation}

  It follows that $ z_{\pm}(\chi, \kappa)$ is complex if and only if
  $t$ is (if one of them is real then the other one has to be real too).

Let $\mathbb{H}$ be the upper half-plane of the complex plane.
 \begin{lem} \label{structure}
 Let $t\in \mathbb H.$ Then there exists $z(t)$ such that $(z(t), t)$ is the solution to $($\ref{sys}$)$ if and only if the following equation holds
 \begin{equation} \label{eq:exp_equation}
p^{\chi, \kappa}(t)\colon= \chi-\left(t+\kappa\left(\frac{1}{\exp(-\textup{St}_{\mes_\omega}(t))-1}+\frac{1}{\exp(-\textup{St}_{\mes_\omega}(t))+1}\right)\right)=0.
 \end{equation}
 \end{lem}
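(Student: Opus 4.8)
The plan is to use the second equation of (\ref{sys}) to eliminate $z$ entirely and to verify that what remains is exactly the equation $p^{\chi,\kappa}(t)=0$. The crucial point is that, for a fixed $t\in\mathbb H$, the relation $\textup{St}_{\mes_\omega}(t)=\log z$ (with the principal branch) pins down a single value $z=z(t)=\exp(\textup{St}_{\mes_\omega}(t))$. First I would check that this is consistent with the principal-branch convention, i.e. that $\textup{St}_{\mes_\omega}(t)$ lands in the strip $\{-\pi<\textup{Im}<\pi\}$ on which $\exp$ and the principal logarithm are mutual inverses. For $t\in\mathbb H$,
\[
\textup{Im}\big(\textup{St}_{\mes_\omega}(t)\big)=-\int_{\mathbb R}\frac{\textup{Im}(t)}{|t-s|^2}\,\mes_\omega(ds),
\]
which is strictly negative and, since $\mes_\omega$ is absolutely continuous with density in $[0,1]$, bounded below by $-\int_{\mathbb R}\frac{\textup{Im}(t)}{|t-s|^2}\,ds=-\pi$. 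Thus $\textup{Im}(\textup{St}_{\mes_\omega}(t))\in(-\pi,0)$, so $z(t)$ is well defined and lies in $\mathbb C\setminus\mathbb R_-$, as required by (\ref{sys}).

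Next I would substitute $z=z(t)$ into the first equation. Expanding $\F_\kappa$ from \eqref{eq:F}, setting $\F_\kappa(z,t)=\chi/(1-\kappa)$, multiplying by $(1-\kappa)$, and collecting the two $\frac{z}{z-1}$ terms (whose coefficients combine to $-1+(1-\kappa)=-\kappa$) reduces the first equation to
\[
\chi=t+\kappa\left(-\frac{z}{z-1}+\frac{z}{1+z}\right).
\]
Writing $w:=1/z=\exp(-\textup{St}_{\mes_\omega}(t))$, the two rational terms become $-\frac{z}{z-1}=\frac{1}{w-1}$ and $\frac{z}{1+z}=\frac{1}{w+1}$, so this display is precisely $p^{\chi,\kappa}(t)=0$.

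Combining the two steps yields the claimed equivalence: for $t\in\mathbb H$ a solution $(z(t),t)$ of (\ref{sys}) exists if and only if the forced value $z(t)=\exp(\textup{St}_{\mes_\omega}(t))$ also satisfies the first equation, which by the computation above holds exactly when $p^{\chi,\kappa}(t)=0$. I expect the substitution and simplification to be entirely routine; the only genuinely delicate point is the branch verification in the first step, since without the bound that the density of $\mes_\omega$ is at most $1$ one could not guarantee that the second equation determines a single $z(t)$ compatible with the principal logarithm.
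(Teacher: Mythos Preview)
Your proof is correct and follows essentially the same algebraic substitution as the paper: both arguments reduce the first equation of (\ref{sys}), with $z=\exp(\textup{St}_{\mes_\omega}(t))$ inserted from the second, to the identity $p^{\chi,\kappa}(t)=0$. Your treatment is in fact more complete, since you explicitly verify that $\textup{Im}(\textup{St}_{\mes_\omega}(t))\in(-\pi,0)$ for $t\in\mathbb H$ (using that the density of $\mes_\omega$ is bounded by $1$), which is needed to ensure the principal-branch equation $\log z=\textup{St}_{\mes_\omega}(t)$ actually has a solution; the paper's proof takes this for granted.
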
 

 \begin{proof} Let $t$ be a solution to \eqref{eq:exp_equation}.
Let us express $\textup{St}_{\mes_\omega}(t))$ from this equation. We find that  
$$\textup{St}_{\mes_\omega}(t)=\log( z_{\pm}(\chi, \kappa)(t)).$$
 
Picking the right sign we find $z(t)$ which solves (\ref{sys}). A direct computations shows that if $(z, t)$ is a solution of \ref{sys} then $t$ is a solution to \eqref{eq:exp_equation}.

 \end{proof}
  
Note that from Lemma \ref{root} using Rouch\'e's theorem it follows that there exists a unique solution $t\in \mathbb H$ of \eqref{eq:exp_equation}. Denote
 $$ S(t):=\textup{St}_{\mes_\omega}(t)$$ for convenience.

The next Proposition defines a homeomorphism between the liquid region
and the upper half-plane $\mathbb H.$
\begin{prop}  \label{homeomorphism}

Let $$ T_\La\colon \mathcal L \rightarrow \mathbb H$$ map $(\chi, \kappa) \in
  \mathcal L$ to the corresponding unique root of \eqref{eq:exp_equation} in
  $\mathbb H.$
Then, $T_\La$ is a homeomorphism with inverse $t \rightarrow (\chi_\La(t),
\kappa_\La(t))$ for all $t\in \mathbb H$, given by
\begin{equation} \label{eq:chi}
\chi_{\mathcal L}(t)=t+\frac{\exp(S(\bar t))(\exp(2 S(t))-1)(t-\bar t)}{(\exp(S(t))-\exp(S(\bar t)))(1+\exp(S(t)+S(\bar t)))};
\end{equation}

 \begin{equation} \label{eq:kappa}
 \kappa_{\mathcal L}(t)=-\frac{(\exp(2 S(t))-1)(\exp(2S(\bar t))-1)(t-\bar t)}{2(\exp(S(t))-\exp(S(\bar t)))(1+\exp(S(t)+S(\bar t)))}.
 \end{equation}

\end{prop}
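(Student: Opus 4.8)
The plan is to exploit the fact that equation \eqref{eq:exp_equation} is a single \emph{complex} equation that is affine in the two \emph{real} unknowns $(\chi,\kappa)$. Writing $S(t):=\textup{St}_{\mes_\omega}(t)$ and setting $g(t):=\frac{1}{\exp(-S(t))-1}+\frac{1}{\exp(-S(t))+1}=\frac{2\exp(S(t))}{1-\exp(2S(t))}$, the equation \eqref{eq:exp_equation} reads $\chi=t+\kappa\,g(t)$. By Lemma \ref{structure} together with the uniqueness (via Rouch\'e's theorem) noted immediately after it, $T_\La$ is well defined: for $(\chi,\kappa)\in\mathcal L$ there is exactly one root $t\in\mathbb H$, and by Definition \ref{liquid_region} such a non-real root exists precisely when $(\chi,\kappa)\in\mathcal L$. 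So the task reduces to producing a continuous two-sided inverse.

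First I would invert the correspondence explicitly. Fix $t\in\mathbb H$ and regard $\chi=t+\kappa\,g(t)$ as a linear equation for the real pair $(\chi,\kappa)$. Taking imaginary parts gives $0=\textup{Im}(t)+\kappa\,\textup{Im}(g(t))$, which determines $\kappa$ uniquely as $\kappa_{\mathcal L}(t)=-\textup{Im}(t)/\textup{Im}(g(t))$ provided $\textup{Im}(g(t))\neq 0$; the real part then yields $\chi_{\mathcal L}(t)$. Equivalently, conjugating $\chi=t+\kappa\,g(t)$ and using $\exp(S(\bar t))=\overline{\exp(S(t))}$ (valid since $\mes_\omega$ is a real measure) produces the companion relation $\chi=\bar t+\kappa\,g(\bar t)$; subtracting eliminates $\chi$ and gives $\kappa_{\mathcal L}(t)=(t-\bar t)/(g(\bar t)-g(t))$, and back-substitution gives $\chi_{\mathcal L}(t)$. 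Substituting the closed form of $g$ and simplifying then yields precisely \eqref{eq:chi} and \eqref{eq:kappa}.

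Next I would check that $(\chi_{\mathcal L}(t),\kappa_{\mathcal L}(t))$ is the genuine inverse and lands in $\mathcal L$. That the two maps are mutually inverse is essentially formal: $t$ solves \eqref{eq:exp_equation} with parameters $(\chi_{\mathcal L}(t),\kappa_{\mathcal L}(t))$ and is the unique root in $\mathbb H$, so $T_\La(\chi_{\mathcal L}(t),\kappa_{\mathcal L}(t))=t$; conversely, for $(\chi,\kappa)\in\mathcal L$ the real linear system solved above has a unique solution, so $(\chi_{\mathcal L},\kappa_{\mathcal L})\circ T_\La=\textup{id}$. To see that the image lies in $\mathcal L$, note that the root $t$ is by hypothesis non-real, so the defining condition of Definition \ref{liquid_region} holds automatically; what must still be verified is that $\textup{Im}(g(t))\neq 0$ (so that $\kappa_{\mathcal L}$ is well defined) and that $0<\kappa_{\mathcal L}(t)<1$ for every $t\in\mathbb H$, which is exactly what makes $T_\La$ surjective onto all of $\mathbb H$. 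Here I would reuse the monotonicity and interlacing analysis from the proof of Lemma \ref{root} together with the explicit form of $t$ in \eqref{z}: the interlacing of the relevant roots controls the sign of $\textup{Im}(g(t))$ and pins down the range of $\kappa_{\mathcal L}$.

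Finally, for the topological statement: continuity of the inverse $t\mapsto(\chi_{\mathcal L}(t),\kappa_{\mathcal L}(t))$ is immediate from \eqref{eq:chi}--\eqref{eq:kappa}, since $S$ is analytic on $\mathbb H$ and the denominators do not vanish there; continuity of $T_\La$ follows from the implicit function theorem applied to $p^{\chi,\kappa}$ at its root in $\mathbb H$, whose simplicity is guaranteed by the Rouch\'e uniqueness. A continuous bijection between the planar regions $\mathcal L$ and $\mathbb H$ with continuous inverse is a homeomorphism. The main obstacle is the sign analysis in the third step: establishing $\textup{Im}(g(t))\neq 0$ and especially the two-sided bound $0<\kappa_{\mathcal L}(t)<1$ for all $t\in\mathbb H$, since this is precisely what certifies that the explicit formulas parametrize the entire liquid region and nothing outside it.
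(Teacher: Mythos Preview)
Your derivation of the inverse formulas is cleaner than what the paper does: rewriting \eqref{eq:exp_equation} as $\chi=t+\kappa\,g(t)$ and solving the real $2\times 2$ linear system (equivalently, pairing the equation with its complex conjugate) is exactly the right way to obtain \eqref{eq:chi}--\eqref{eq:kappa}, and your continuity arguments for both directions are fine. Your injectivity argument is also essentially the paper's (the paper subtracts two instances of \eqref{eq:exp_equation} and observes $t$ would be real).

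Where you diverge is in how you handle surjectivity onto $\mathbb H$, and there is a genuine gap. You correctly isolate the obstruction --- one must show $\textup{Im}\,g(t)\ne 0$ and $0<\kappa_{\mathcal L}(t)<1$ for \emph{every} $t\in\mathbb H$ --- but you do not prove it, and the proposed tool does not fit. Lemma~\ref{root} is proved only for the special class of step measures $\mes_\omega$ (uniform on finitely many intervals), and the interlacing argument there counts real roots of a one-variable equation at fixed $(\chi,\kappa)$; it does not give a sign or range estimate for $\kappa_{\mathcal L}(t)$ as $t$ ranges over $\mathbb H$. The lower bound $\kappa_{\mathcal L}(t)>0$ can in fact be obtained directly (since $\textup{Im}\,S(t)\in(-\pi,0)$ on $\mathbb H$, one checks $e^{-S(t)}-e^{S(t)}\in\mathbb H$ and hence $\textup{Im}\,g(t)<0$), but the upper bound $\kappa_{\mathcal L}(t)<1$ amounts to $\textup{Im}(t+g(t))<0$ for all $t\in\mathbb H$, which is not a consequence of anything you cite and requires its own argument.

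The paper sidesteps this direct verification entirely and argues topologically (following \cite{DM}). It first uses a Taylor expansion of $S(t)$ at infinity together with the moment inequalities for $\mes_\omega$ (mass one, density at most one) to show that for $|t|$ large the pair $(\chi_{\mathcal L}(t),\kappa_{\mathcal L}(t))$ lands in the interior of $\mathcal R$; this gives $\mathcal L\ne\varnothing$ and $T_{\mathcal L}(\mathcal L)\ne\varnothing$. Then, having shown $T_{\mathcal L}$ is a continuous injection (hence open), it proves $T_{\mathcal L}(\mathcal L)=\mathbb H$ by contradiction: a boundary point $t\in\mathbb H\setminus T_{\mathcal L}(\mathcal L)$ is approached by $t_n\in T_{\mathcal L}(\mathcal L)$, a subsequence of the bounded preimages $(\chi_{\mathcal L}(t_n),\kappa_{\mathcal L}(t_n))$ converges in $\overline{\mathcal R}$, and passing to the limit in \eqref{eq:exp_equation} forces $t=T_{\mathcal L}(\chi,\kappa)$ for the limit point. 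This avoids ever having to prove the two-sided bound on $\kappa_{\mathcal L}(t)$ pointwise. If you want to salvage your direct approach, you need an independent argument for $\textup{Im}(t+g(t))<0$ valid for arbitrary admissible $\mes_\omega$; otherwise the paper's boundary/compactness argument is the missing ingredient.
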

\begin{proof}
The proof is completely analogous to the proof of Theorem 2.1 in
\cite{DM}. For the reader's convenience we will present it.

We need to show
\begin{enumerate}
\item $\mathcal L$ is non-empty. More precisely, we will show that
  whenever $|t|$ is large enough $t \rightarrow
  (\chi_\La(t),\kappa_\La(t))\in \mathcal R$ and
\eqref{eq:exp_equation} holds. Then it follows that $
  (\chi_\La(t),\kappa_\La(t))\in \mathcal L$ for such $t.$
\item $\mathcal L$ is open.
\item $T_\La\colon \mathcal L\rightarrow \mathbb H$ is continuous.
\item  $T_\La\colon \mathcal L\rightarrow \mathbb H$ is injective.
\item  $T_\La\colon \mathcal L\rightarrow T_\La(\mathcal L)$ has inverse
  for all $t\in T_\La(\mathcal L).$
\item $T_\La(\mathcal L)=\mathbb H.$
\end{enumerate}

  Let us start with (1). Fix $t\in \mathbb H$ and define
\begin{equation} \label{eq:xk}
(\chi,
  \kappa)=(\chi_{\mathcal L}(t), \kappa_{\mathcal L}(t))
\end{equation}
using \eqref{eq:chi} and \eqref{eq:kappa} .
Then the following equation holds
 \begin{equation}
 \label{eq:exp-t-chi}
 (\exp(2S(t))-1)(t-\chi)=2 \exp(S(t))\kappa.
 \end{equation}
It is equivalent to \eqref{eq:exp_equation}.

 Second, let us look at the Taylor expansions for
$\chi_{\mathcal L}(t)$ and $\kappa_{\mathcal L}(t),$ when $|t|$ goes
to infinity.

Let $a, b\in \mathbb R$ be such that
$\textup{Support}(\mes_\omega)\subset [a, b].$
We have the following asymptotics for the Stieltjes transform
$$S(t)=\frac{1}{t}+\frac{\alpha}{t^2}+\frac{\beta}{t^3}+O(|t|^{-4}),$$
where $\alpha=\int_a^b x\text{ } \mes_\omega[dx]$ and $\beta=\int_a^b x^2
\mes_\omega[dx].$ After some computation we get

$$\chi=\alpha+O(|t|^{-1})  \text{  and  } \kappa=1+\left(\alpha^2-\beta-\frac{1}{6}\right)\frac{1}{|t|^2} +O(|t|^{-3}).$$

Recall that by our assumptions $\mes_\omega$ is a probability measure
such that $b-a>1$ since $\mes_\omega\leq \lambda,$ where
$\lambda$ is the Lebesgue measure. Therefore, we have
$$a+\frac{1}{2}=\int_{a}^{a+1} xdx<\alpha<\int_{b-1}^b xdx=b-\frac{1}{2}.$$
 Similarly,
$$\beta-\alpha^2>\frac{1}{2}\int\int_0^1(x-y)^2dxdy=\frac{1}{12}.$$

It follows that $(\chi, \kappa) \in(a+\frac{1}{2}, b-\frac{1}{2})\times(0,1)$
whenever $|t|$ is chosen to be sufficiently large. This proves (1).

Consider (2). Let $(\chi_1, \kappa_1)\in \mathcal L$ and
$t_1=T_{\mathcal L}(\chi_1, \kappa_1).$ We will show that
$(\chi_2,\kappa_2)\in \mathcal L$ whenever $|\chi_1-\chi_2|$ and
$|\kappa_1-\kappa_2|$ are sufficiently small. Fix $\epsilon >0$ such that $B(t_1,
\epsilon)\subset \mathbb H$. Let $t_1$ is the unique root of
$p^{\chi_1, \kappa_1}(t)$ in $\mathbb H,$ the extreme value theorem gives,
$$\inf\limits_{t\in \partial B(t_1, \epsilon)} |p^{\chi_1, \kappa_1}(t)|>0.$$

Also,
\begin{equation}
|p^{\chi_1, \kappa_1}(t)-p^{\chi_2,\kappa_2}(t)|<\epsilon
\end{equation}
for any $\epsilon>0,$ whenever $|\chi_1-\chi_2|$ and
$|\kappa_1-\kappa_2|$ are sufficiently small.

Whenever  $|\chi_1-\chi_2|$ and
$|\kappa_1-\kappa_2|$ are sufficiently small $|p^{\chi_1, \kappa_1}(t)|>|p^{\chi_1, \kappa_1}_+(t)-p^{\chi_2,
  \kappa_2}_+(t)|.$ Rouch\'es Theorem implies that $p^{\chi_1,
  \kappa_1}_+(t)$ has a root in $B(t, \epsilon).$

Consider (3). The analysis is very similar to part (2) and is
tautologically the same as in \cite{DM}.

Consider (4). Suppose there exist $(\chi_1, \kappa_1), (\chi_2, \kappa_2)\in \mathcal
L$ such that $T_{\mathcal L}(\chi_1, \kappa_1)=T_{\mathcal L}(\chi_2,
\kappa_2)=t\in \mathbb H.$

From (\ref{eq:exp_equation}) we get
$$t=\frac{\kappa_1\chi_1-\kappa_2\chi_2}{\kappa_1-\kappa_2}.$$
But then $t\in \mathbb R$ whenever $\kappa_1\neq \kappa_2,$ which
contradicts $t\in \mathbb H.$ Thus, $\kappa_1=\kappa_2\in(0,1).$ It
follows $\chi_1=\chi_2$ too.

Consider (5). This follows from \eqref{eq:exp-t-chi}.

Consider (6). We already know that $T_{\mathcal L}(\mathcal L)$ is
open and homeomorphic to $\mathcal L$. Suppose there exists $t\in \partial T_{\mathcal
  L}(\mathcal L)$ such that $t\in \mathbb H \setminus T_{\mathcal
  L}(\mathcal L).$ Let $\{t_n\}\in T_{\mathcal L}(\mathcal L)$ be a sequence
that converges to $t$ as $n\rightarrow \infty$. There exists a
subsequence of $\{(\chi_\La(t_n), \kappa_\La(t_n))\}$ that converges to
some $(\chi, \kappa).$ Then we
can pass to the limit in \eqref{eq:exp_equation} and we see that
$(\chi, \kappa)\in \mathcal R$ and
$t=T_\La(\chi, \kappa).$ This is a contradiction.
\end{proof}

\subsection{Gaussian Free Field}
A {\it Gaussian family} is a collection of jointly Gaussian random variables $\{ \xi_a \}_{a \in \Upsilon}$
indexed by an arbitrary set $\Upsilon$. We assume that all our random variables are centered, that is
\begin{equation*}
\mathbf E \xi_u = 0, \qquad \mbox{ for all } u \in \Upsilon.
\end{equation*}
Any Gaussian family gives rise to a \emph{covariance kernel}
$\Cov : \Upsilon \times \Upsilon \to \mathbb R$ defined by
\begin{equation*}
\Cov (u_1, u_2) = \mathbf E ( \xi_{u_1} \xi_{u_2} ).
\end{equation*}

Assume that a function $\tilde C : \Upsilon \times \Upsilon \to \mathbb R$
is such that for any $n\ge 1$ and $u_1, \dots, u_n \in \Upsilon$,
$[\tilde C (u_i,u_j)]_{i,j=1}^{n}$ is a symmetric and positive-definite matrix.
Then there exists a centered Gaussian family with the covariance
kernel $\tilde C$ (see e.g. \cite{Car}).

Let $C_0^\infty$ be the space of smooth real--valued compactly
supported test functions on the upper half-plane
$\mathbb H$.
Let us set
\begin{equation*}
\tilde G (z,w):= - \frac{1}{2 \pi} \ln \left| \frac{z-w}{z - \bar w} \right|, \qquad z,w \in \mathbb H.
\end{equation*}
This is the Green function of the Laplace operator on $\mathbb H$ with Dirichlet boundary conditions.
Define a function $C : C_0^\infty \times C_0^\infty \to \mathbb R$ via
\begin{equation*}
C (f_1, f_2) := \int_{\mathbb H} \int_{\mathbb H} f_1 (z) f_2 (w) \tilde G(z,w)
dz d \bar z dw d \bar w.
\end{equation*}

The \emph{Gaussian Free Field} (GFF) $\mathfrak G$ on $\mathbb{H}$ with zero boundary conditions can be
defined as a Gaussian family $\{ \xi_f \}_{ f \in C_0^\infty}$ with covariance kernel $C$.

The integrals $\int f(z) \mathfrak G(z) dz$ over finite contours
in $\mathbb{H}$ with continuous functions $f(z)$ make sense,
cf. \cite{She}, while the field $\mathfrak G$ cannot be defined as a random function on $\mathbb H$.

\subsection{Statement of the theorem}
Consider a rectangular Aztec diamond $\mathcal R(N,\Omega,m).$ Let
$\omega$ be a signature corresponding to the boundary row under
Construction \ref{con}.

We have defined a probability measure $P^N_\omega$ on the sets of signatures
$$\mathcal S^N = (\mu^{(N)}, \nu^{(N)}, \dots, \mu^{(1)}, \nu^{(1)}),$$
see equation \eqref{eq:measure1}.

Let us define a function $\Delta^N$ on $\mathbb R_{\ge 0}\times \mathbb R_{\ge 0} \times
\mathcal S \to \mathbb N$ via
\begin{multline} \label{eq:BigDelta}
\Delta^N\colon (x, y,  (\mu^{(N)}, \nu^{(N)}, \dots, \mu^{(1)}, \nu^{(1)}))
\rightarrow  \\
\sqrt{\pi} \left| \left\{ 1 \le s \le N- \lfloor y \rfloor : \mu^{(N- \lfloor y \rfloor)}_s +(N- \lfloor y \rfloor)- s \ge x \right\}\right|.
\end{multline}

Recall the definition \eqref{eq:delta} of $\Delta^N_D(i, j),$ where
$D\in \mathfrak D(N, \Omega, m)$ and $(i,j)$ is a lattice vertex in
$\mathcal R(N,\Omega,m).$ Its domain of definition can be
extended to $(i, j)\in \mathbb R_{\ge 0}\times \mathbb R_{\ge 0}.$
We get
\begin{equation}
\Delta^N(i,j, \mu(D))= \sqrt{\pi} \Delta^N_D(i, j).
\end{equation}

Here $\mu(D)$ is the sequence of signatures corresponding to $Y(D),$
where $Y$ is a bijection between $\mathfrak D(N, \Omega, m)$ and
 $\mathcal S(N, \Omega, m),$ see Theorem \ref{bij}.

Let us denote by $\Delta^N_{\mathfrak D}(x, y)$ the pushforward of the
 measure $P^N_\omega$ on $\mathcal S^N$ with respect to
 $\Delta^N$. Note that due to Proposition \ref{uniform} $\Delta^N_{\mathfrak D}(x, y)$
 coincides with the pushforward of the uniform measure on $\mathfrak D(N, \Omega, m)$
with respect to $\Delta_D.$

Let us carry $\Delta^N _{\mathfrak D} (x,y)$ over to $\mathbb H$ in the following way
$$
{\bold \Delta} _{\mathfrak D}^{N} (z) := \Delta _{\mathfrak D}^{N} ( N \chi_{\La} (z), N \kappa_{\La} (z)), \qquad z \in \mathbb H,
$$
where $\chi_{\La} (z)$ is defined by \eqref{eq:chi}  and
$\kappa_{\La} (z)$ is defined by \eqref{eq:kappa}.
For a real number $0< \kappa < 1$ and an integer $j$ define a moment
of the random function ${\bold \Delta} _{\mathfrak D}^{N}$ as
\begin{equation}
\label{eq:before-limit-moment}
M^\kappa_j = \int_{-\infty}^{+\infty} \chi^j \left({\bold \Delta} _{\mathfrak D}^{N} ( N \chi, N \kappa) - \mathbf E {\bold \Delta} _{\mathfrak D}^{N}( N \chi, N \kappa) \right) d \chi.
\end{equation}
Also define the corresponding moment of GFF via
$$
\mathcal M^\kappa_j = \int_{z \in \mathbb H; \kappa_{\La} (z) = \kappa} \chi_{\La} (z)^j \mathfrak G(z) \frac{d \chi_{\La}(z)}{dz} dz.
$$

\begin{thm}[Central Limit Theorem]
\label{theorem:gff-domino}

Let ${\bold \Delta} _{\mathfrak D}^{N}(z)$ be a random function corresponding to the uniformly random domino tiling of the rectangular Aztec diamond in the way described above. Then
$$
{\bold \Delta} _{\mathfrak D}^{N}(z)- \mathbf E{\bold \Delta} _{\mathfrak D}^{N} (z) \xrightarrow[N \to \infty]{} \mathfrak G (z).
$$
In more details, as $N \to \infty$, the collection of random variables
$\{ M^\kappa_j \}_{1 \ge \kappa>0; j \in \mathbb{N}}$ converges, in the sense of finite-dimensional distributions, to $\{ \mathcal M^\kappa_j \}_{1 \ge \kappa>0; j \in \mathbb{N}}$.
\end{thm}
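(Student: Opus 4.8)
The plan is to deduce Theorem \ref{theorem:gff-domino} from the multilevel Central Limit Theorem for discrete particle systems of \cite{BG2}, whose hypotheses have essentially been prepared for our model in Section \ref{sec:3}. The first step is to recognize the random field $\mathbf{\Delta}_{\mathfrak D}^N$ as assembled from linear statistics of the random signatures $\mu^{(N-t)}$. Fix a level $k=[2\kappa N]$ and set $t=\lfloor\tfrac{k-1}{2}\rfloor$. By \eqref{eq:BigDelta} and the reasoning of Lemma \ref{height_formula}, for fixed $\kappa$ the quantity $\mathbf{\Delta}_{\mathfrak D}^N(N\chi,N\kappa)$ is, up to the constant $\sqrt\pi$, the counting function of the atoms $b_s=\mu^{(N-t)}_s+(N-t)-s$ of the random measure $m[\rho^k(N)]$. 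Integrating by parts in \eqref{eq:before-limit-moment} turns each moment into a centered power sum,
\[
M^\kappa_j=\frac{\sqrt\pi}{j+1}\left(\sum_{s}\Big(\tfrac{b_s}{N}\Big)^{j+1}-\mathbf E\sum_{s}\Big(\tfrac{b_s}{N}\Big)^{j+1}\right),
\]
so that the theorem reduces to a joint Gaussian limit, across all $\kappa\in(0,1)$ and $j\in\mathbb N$, for the fluctuations of the power sums of the measures $m[\rho^k(N)]$.

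Second, I would feed the explicit Schur generating functions of Lemma \ref{Schur_generating} into the machinery of \cite{BG2}. The joint law of the signatures at different levels is the Markov chain governed by the transitions $\st$ and $\pr$ of \eqref{st}--\eqref{pr}, and the same asymptotic expansion that produced Proposition \ref{ourmoment_convergence} controls the relevant (joint) Schur generating functions to the second order demanded by \cite{BG2}, with limiting logarithmic derivative $Q_\kappa'(u)=\frac{1}{1-\kappa}\left(\frac{\kappa}{1+u}+\H'_{\mes_\omega}(u)\right)$ at level $\kappa$. The general theorem then delivers two facts at once: all joint cumulants of order $\ge 3$ of the family $\{M^\kappa_j\}$ tend to zero, so the limit is Gaussian; and the limiting covariance of two moments, possibly at distinct levels $\kappa_1,\kappa_2$, is an explicit double contour integral in the spectral variables, whose kernel reduces after the standard simplifications to the derivative kernel $(z-w)^{-2}$ familiar from loop-equation analyses. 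It is the \emph{cross-level} ($\kappa_1\ne\kappa_2$) part of this covariance that encodes the genuinely two-dimensional nature of the limit.

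Third, I would transport this covariance to $\mathbb H$ through the homeomorphism $T_\La$ of Proposition \ref{homeomorphism}. Writing the two integration variables through the roots $t_1,t_2\in\mathbb H$ of \eqref{eq:exp_equation} and using the inverse formulas \eqref{eq:chi}--\eqref{eq:kappa} for $\chi_\La$ and $\kappa_\La$, the double integral over the spectral contours becomes a double integral over the level curves $\{\kappa_\La=\kappa_1\}$ and $\{\kappa_\La=\kappa_2\}$ against the test functions $\chi_\La^{j_1}$ and $\chi_\La^{j_2}$. The crucial identity to verify is that the transported kernel is exactly the Dirichlet Green function $\tilde G(z,w)=-\tfrac{1}{2\pi}\ln\big|\tfrac{z-w}{z-\bar w}\big|$ on $\mathbb H$. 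Once this is established, $\lim_N\Cov(M^{\kappa_1}_{j_1},M^{\kappa_2}_{j_2})=\Cov(\mathcal M^{\kappa_1}_{j_1},\mathcal M^{\kappa_2}_{j_2})$, and the method of moments upgrades the convergence of covariances together with the vanishing of higher cumulants to convergence in the sense of finite-dimensional distributions, which is precisely the assertion of the theorem.

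I expect the main obstacle to be this last matching step. One must follow how the contours encircling $\textup{Support}(\mes_\omega)$ are deformed under the maps $z_\pm(\chi,\kappa)(t)$ of \eqref{z}, keep track of the branch of the logarithm appearing in \eqref{sys}, and show that the residue computation produces the image-charge combination $\ln\big|(z-w)/(z-\bar w)\big|$ encoding the Dirichlet boundary condition on $\mathbb H$, rather than the full-plane kernel $\ln|z-w|$; the antiholomorphic $\bar w$ term must emerge from the geometry of $T_\La$ rather than being inserted by hand. A secondary, routine point is to justify exchanging the $N\to\infty$ limit with the $\chi$-integration defining $M^\kappa_j$, which follows from the uniform support bounds guaranteed by regularity of $\omega(N)$ (Definition \ref{definition_regularity}), exactly as in the proof of Theorem \ref{LLN}.
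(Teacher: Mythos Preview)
Your proposal is correct and follows essentially the same route as the paper: integrate by parts to reduce $M^\kappa_j$ to centered power sums, invoke Theorem \ref{th:general-for-domino} (via Theorem \ref{theorem:covariance-domino}) for joint Gaussianity and the explicit double-contour covariance, then change variables and deform contours to match the Dirichlet Green function. One point worth clarifying on the step you flagged as the main obstacle: the antiholomorphic $\bar w$ does not come from a residue or branch computation, but simply from the fact that after the change of variables $\tilde z = S_{\mes_\omega}^{(-1)}(\log(1+z))$ the kernel collapses exactly to $(\tilde z-\tilde w)^{-2}$, and the deformed contour $\mathcal Z(\kappa)$ is by construction the union of the level curve $\{\kappa_\La=\kappa\}\subset\mathbb H$ with its complex conjugate; splitting the resulting double integral into its four pieces and using $2\ln\big|\tfrac{z-w}{z-\bar w}\big|=\ln(z-w)-\ln(z-\bar w)-\ln(\bar z-w)+\ln(\bar z-\bar w)$ then yields the Green function directly.
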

\begin{rem}
Due to Lemma \ref{height_formula} the corresponding
statement for the random height function $h_{\mathfrak D}$ follows.
\end{rem}

\begin{rem}
Theorem \ref{theorem:gff-domino} establishes the convergence for a
certain limited class of test functions. It is very plausible that the statement can be extended to a larger class of test functions, but we do not address this question here.
\end{rem}

\subsection{Schur generating functions}

For the proof of Theorem \ref{theorem:gff-domino} we will use the moment method of Schur-generating functions from \cite{BG2}.
In this section we will recall necessary notions and results from
\cite{BG2} (Theorems \ref{th:general-for-domino} and
\ref{Theorem_character_asymtotics_2} below). After this we will apply
them to our measure (Theorem \ref{theorem:covariance-domino}).

\begin{defi}
We say that a sequence of probability measures $\rho(N)$ on $\GT_N$ is \textbf{CLT-appropriate} if theirs Schur generating functions $S^{U(N)}_{\rho(N)} (x_1, \dots, x_N)$ satisfy the following condition: For any fixed $k \in \mathbb N$ we have
\begin{itemize}
\item
$$
\lim_{N \to \infty} \frac{ \pa_i \ln S^{U(N)}_{\rho(N)} (x_1, x_2, \dots, x_k, 1^{N-k} ) }{N} = F (x_i), \qquad 1 \le i \le k,
$$

\item
$$
\lim_{N \to \infty} \pa_i \pa_j \ln S^{U(N)}_{\rho(N)} (x_1, x_2, \dots, x_k, 1^{N-k}) = G (x_i, x_j), \qquad 1 \le i, j \le k, \ i \ne j,
$$
where the functions $F(x)$ and $G(x,y)$ are holomorphic in a
neighborhood of the unity, and the convergence is uniform over an open
complex neighborhood of $(x_1, \dots, x_k) = (1^k)$. Note that the
functions in the left-hand side of these equations depend on $k$
variables, while the the functions in the right-hand side depend on 1 or 2 variables, respectively.
\end{itemize}
\end{defi}

For $\la \in \GT_{k_1}, \mu \in \GT_{k_2}$, $k_1 \ge k_2$, let us
introduce the coefficients $\pr_{k_1 \to k_2} (\la \to \mu)$ (the
construction is similar to the one from Section \ref{sec:2}) via
$$
\frac{s_{\lambda} (x_1, \dots, x_{k_2}, 1^{k_1-k_2})}{s_{\la} (1^{k_1})} = \sum_{\mu \in \GT_{k_2}} \mathrm{pr}_{k_1 \to k_2} (\la \to \mu) \frac{s_{\mu} (x_1, \dots, x_{k_2})}{s_{\mu} (1^{k_2})}.
$$

For a symmetric function $g (x_1, \dots, x_N)$ we define the coefficients $\st_{(g)}^{(N)} (\la \to \mu)$, for $\la \in \GT_N$, $\mu \in \GT_N$, via
$$
g (x_1, \dots, x_N) \frac{s_{\la} (x_1, \dots, x_N)}{s_{\lambda} (1^N)} = \sum_{\mu \in \GT_N} \st_{(g)}^{(N)} (\la \to \mu) \frac{s_{\mu} (x_1, \dots, x_N)}{s_{\mu} (1^N)},
$$
where we assume that the function $g (x_1, \dots, x_N)$ is such that the number of terms in the summation in the right-hand side is finite for all $\la$.

Let $m$ be a positive integer and let $0 < a_1 \le \dots \le a_m = 1$ be reals. Let $\rho_N = \rho_{[a_m N]}$ be a CLT-appropriate measure on $\GT_{[a_m N]}$ with the Schur generating function $S_N$, and let $g_1 (x_1, \dots, x_{[a_1 N]})$, $\dots$, $g_m (x_1, \dots, x_{[a_m N]})$ be a collection of Schur generating functions of some CLT-appropriate probability measures.

Let us define the probability measure on the set
$$
\GT_{[a_m N]} \times \GT_{[a_m N]} \times \GT_{[a_{m-1} N]} \times \GT_{[a_{m-1} N]} \times \dots \times \GT_{[a_1 N]} \times \GT_{[a_1 N]}.
$$

That is, we need to define the probability of a collection of signatures $(\mu^{(m)}$, $\nu^{(m)}$, $\mu^{(m-1)}$, $\nu^{(m-1)}$, $\dots$, $\mu^{(1)}$, $\nu^{(1)})$, where $\mu^{(i)}$, $\nu^{(i)}$ are signatures of length $[a_{i} N]$, $i=1,2, \dots, m$. Let us do this in the following way.

Let
\begin{align*}
H_m^{\mu} (x_1, \dots, x_{[a_m N]}) := S_N (x_1, \dots, x_{[a_m N]}), \\
H_m^{\nu} (x_1, \dots, x_{[a_m N]}) := H_m^{\mu} (x_1, \dots, x_{[a_m N]}) g_m (x_1, \dots, x_{[a_m N]}), \\
H_{m-1}^{\mu} (x_1, \dots, x_{[a_{m-1} N]}) := H_{m}^{\nu} (x_1, \dots, x_{[a_{m-1} N]}, 1^{[a_m N]-[a_{m-1} N]}), \\
H_{m-1}^{\nu} (x_1, \dots, x_{[a_{m-1} N]}) := H_{m-1}^{\mu} (x_1, \dots, x_{[a_{m-1} N]}) g_{m-1} (x_1, \dots, x_{[a_{m-1} N]}), \\
\dots \dots \dots \dots \dots \\
H_1^{\mu} (x_1, \dots, x_{[a_1 N]}) := H_{2}^{\nu} (x_1, \dots, x_{[a_2 N]}, 1^{[a_{2} N]-[a_1 N]}), \\
H_1^{\nu} (x_1, \dots, x_{[a_1 N]}) := H_1^{\mu} (x_1, \dots, x_{[a_1 N]}) g_1 (x_1, \dots, x_{[a_1 N]}),
\end{align*}

Assume that the coefficients $\st^{[a_t N]}_{g_t} (\mu \to \nu)$, $t=1, \dots, s$ are positive for all $t$, $\mu \in \GT_{[a_t N]} $, $\nu \in \GT_{[a_t N]}$ (again, we also assume that the linear decomposition which defines these coefficients is finite).

We define the probability of the configuration $(\mu^{(m)}, \nu^{(m)}, \dots, \mu^{(1)}, \nu^{(1)})$ by
\begin{multline}
\label{eq:mes-time-space}
\mathcal{P^N} (\mu^{(m)}, \nu^{(m)}, \mu^{(m-1)}, \nu^{(m-1)}, \dots, \mu^{(1)}, \nu^{(1)}) \\ := \rho_N (\mu^{(m)}) \st^{[a_1 N]}_{g_1} ( \mu^{(1)} \to \nu^{(1)}) \prod_{i=2}^{m} \st^{[a_i N]}_{g_i} ( \mu^{(i)} \to \nu^{(i)}) \pr_{[a_i N] \to [a_{i-1} N]} (\nu^{(i)} \to \mu^{(i-1)}).
\end{multline}

The conditions above guarantee the existence of the following limits:
$$
\lim_{N \to \infty} \frac{ \pa_i \ln H^{\mu}_t (x_1, x_2, \dots, x_k, 1^{N-k} ) }{N} =: F_t (x_i), \qquad 1 \le i \le k
$$
$$
\lim_{N \to \infty} \pa_i \pa_j \ln H^{\mu}_t (x_1, x_2, \dots, x_k, 1^{N-k}) =: G_t (x_i, x_j), \qquad 1 \le i, j \le k, \ i \ne j,
$$
for any $t=1,2, \dots, m$.

Let
$$
p_{k;t} := \sum_{i=1}^{[a_{t} N]} \left( \mu^{(t)}_{i} + [a_{t} N] - i \right)^k,
$$
be (shifted) moments of these signatures.
These functions become random when we consider the random sequence
$(\mu^{(m)}, \nu^{(m)}, \dots, \mu^{(1)}, \nu^{(1)})$ distributed
according to the probability measure $\mathcal P^N$ . We are interested in the asymptotic behavior of these functions.

\begin{thm}$($\cite{BG2}$)$
\label{th:general-for-domino}
In the notations above, the collection of random functions
$$
\{ N^{-k} ( p_{k;t} - \E p_{k;t} ) \}_{t=1, \dots, m; k \in \mathbb{N} }
$$
is asymptotically Gaussian with the limit covariance
\begin{multline*}
\lim_{N \to \infty} N^{-k_1-k_2} \cov \left( p_{k_1;t_1}, p_{k_2;t_2} \right) = \frac{a_1^{k_1} a_2^{k_2} }{(2 \pi \ii)^2} \oint_{|z|=\ep} \oint_{|w| = 2 \ep} \left( \frac{1}{z} +1 + (1+z) F_{t_1} (1+z) \right)^{k_1} \\ \times \left( \frac{1}{w} +1 + (1+w) F_{t_2} (1+w) \right)^{k_2} \left( G_{t_2} (z, w) + \frac{1}{(z-w)^2} \right) dz dw,
\end{multline*}
where $\ep \ll 1$ and $1 \le t_1 \le t_2 \le m$.
\end{thm}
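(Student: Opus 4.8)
The plan is to prove this by the method of Schur generating functions, realizing the power sums $p_{k;t}$ as eigenvalues of explicit differential operators and then extracting their joint cumulants by an asymptotic residue computation, exactly along the lines of \cite{BG2}. The starting point is that for the $M\times M$ Vandermonde $a_\delta(x)=\prod_{i<j}(x_i-x_j)$, the operator $\mathcal D_k:=a_\delta^{-1}\circ\bigl(\sum_{i}(x_i\pa_{x_i})^k\bigr)\circ a_\delta$ is diagonalized by Schur functions, $\mathcal D_k\, s_\la=\bigl(\sum_i(\la_i+M-i)^k\bigr)s_\la$, since $a_\delta s_\la=\det(x_i^{\la_j+M-j})$ is an eigenfunction of $\sum_i(x_i\pa_{x_i})^k$ with eigenvalue $\sum_i(\la_i+M-i)^k$. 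Consequently, for any probability measure $\rho$ on $\GT_M$ with Schur generating function $S_\rho$, one has $\E_\rho[p_{k_1}\cdots p_{k_r}]=\bigl(\mathcal D_{k_1}\cdots\mathcal D_{k_r}S_\rho\bigr)(x)\big|_{x=1^M}$, because composing the operators multiplies their eigenvalues while $s_\la(1^M)/s_\la(1^M)=1$. This reduces every single-level joint moment to an operator applied to a generating function and evaluated at $1^M$.

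First I would treat the diagonal case $t_1=t_2=t$. The marginal law of $\mu^{(t)}$ has generating function $H^\mu_t$, whose normalized logarithmic derivatives converge to $F_t$ and $G_t$ by the CLT-appropriate hypothesis, so $\cov(p_{k_1;t},p_{k_2;t})=\bigl(\mathcal D_{k_1}\mathcal D_{k_2}H^\mu_t\bigr)\big|_{1}-\bigl(\mathcal D_{k_1}H^\mu_t\bigr)\big|_{1}\bigl(\mathcal D_{k_2}H^\mu_t\bigr)\big|_{1}$ isolates the connected part. The asymptotic evaluation is the heart of the argument: after conjugating by $a_\delta$, each application of $\sum_i(x_i\pa_{x_i})^k$ followed by setting the remaining $M-k$ variables to $1$ is rewritten, via the integral representation of the iterated derivative, as a contour integral in a variable $z$ centred near $x=1$; the substitution $x=1+z$ together with the limit $\tfrac1M\ln H^\mu_t\to\int F_t$ and the dimension normalization produces the factor $\bigl(\tfrac1z+1+(1+z)F_t(1+z)\bigr)^{k}$. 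The two-body term generated by the Vandermonde conjugation yields, after the residue analysis, the universal kernel $\tfrac1{(z-w)^2}$, while the second logarithmic derivative of $H^\mu_t$ contributes $G_t(z,w)$; the nested contours $|z|=\ep$, $|w|=2\ep$ are chosen so the residue extraction is legitimate and the diagonal $z=w$ is avoided.

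Next I would handle the off-diagonal levels $t_1<t_2$ by conditioning. Using the Markov structure of $\mathcal P^N$ encoded in \eqref{eq:mes-time-space}, one has $\E[p_{k_1;t_1}p_{k_2;t_2}]=\E_{\mu^{(t_2)}}\bigl[p_{k_2}(\mu^{(t_2)})\,\E[p_{k_1;t_1}\mid\mu^{(t_2)}]\bigr]$, where the conditional law of $\mu^{(t_1)}$ given $\mu^{(t_2)}=\la$ has generating function obtained from $s_\la/s_\la(1)$ by the same multiplication-by-$g_j$ and restriction steps for $t_1<j\le t_2$. Since $\mu^{(t_1)}$ sits downstream of $\mu^{(t_2)}$ in the construction, the correlation between the two levels propagates entirely through the fluctuations of the deeper, conditioning level, which is why it is $G_{t_2}$ that survives. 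Applying $\mathcal D_{k_1}$ to the shallow variables and $\mathcal D_{k_2}$ to the deep variables and repeating the residue analysis gives the factors $\bigl(\tfrac1z+1+(1+z)F_{t_1}(1+z)\bigr)^{k_1}$ and $\bigl(\tfrac1w+1+(1+w)F_{t_2}(1+w)\bigr)^{k_2}$, the surviving kernel $G_{t_2}(z,w)+\tfrac1{(z-w)^2}$, and the scaling prefactor dictated by the lengths $[a_tN]$ of the signatures, which is the stated covariance.

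Finally, to upgrade from the covariance to asymptotic Gaussianity I would run the same operator computation for joint cumulants of order $r\ge3$ and show they are of strictly lower order: the connected part of an $r$-fold operator application carries $r$ contour integrals but contributes only $O(N^{-(r-2)})$ after normalization by $N^{-(k_1+\dots+k_r)}$, so all cumulants of order $\ge3$ vanish in the limit. A family with vanishing cumulants of order $\ge3$ and convergent second cumulants is asymptotically Gaussian, so the theorem follows. The main obstacle is the asymptotic analysis of the two preceding paragraphs: correctly extracting the universal term $\tfrac1{(z-w)^2}$ from the Vandermonde conjugation, tracking that it is $G_{t_2}$ rather than $G_{t_1}$ that enters the cross-level kernel, and justifying the uniform convergence that permits interchanging the limit $N\to\infty$ with the residue extraction.
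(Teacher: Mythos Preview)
The paper does not prove this theorem: it is quoted verbatim from \cite{BG2} and immediately applied. There is therefore no proof in the paper to compare your proposal against.

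That said, your sketch is an accurate summary of the method actually used in \cite{BG2}: realizing the power sums as eigenvalues of the Vandermonde-conjugated operators $\mathcal D_k = a_\delta^{-1}\circ\bigl(\sum_i (x_i\partial_{x_i})^k\bigr)\circ a_\delta$, expressing joint moments as these operators applied to the Schur generating function and evaluated at $1^M$, converting the resulting derivative expressions into nested contour integrals around $1$, and reading off the kernel $G_{t_2}(z,w)+\tfrac{1}{(z-w)^2}$ from the second logarithmic derivative and the Vandermonde cross-terms. Your handling of the multi-level case via the Markov property of $\mathcal P^N$ and your argument for Gaussianity via the vanishing of higher cumulants are also the mechanisms used in \cite{BG2}. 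So the proposal is correct in outline and matches the source the paper cites; the actual work lies in the uniform estimates needed to justify the residue extraction, which you correctly flag as the main obstacle.
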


Now we will apply this theorem to our case. Consider $N\rightarrow \infty$ asymptotics of a rectangular Aztec diamond $\mathcal R(N, \a(N), m(N)).$ We assume that the sequence of signatures $\omega(N)$ corresponding to the
 first row is regular and the sequence of measures $\{
 m[\omega(N)]\}$ weakly converge to $\mes_\omega.$ Our probability measure on the set of domino tilings $\mathfrak D(N, \Omega,m)$ defined by \eqref{eq:measure} has $\rho$ concentrated on a single signature (but with a non-trivial asymptotic behavior), and the functions $g_i$ are equal to the powers of $\prod_i (1+x_i)$ times a constant.

Let us recall some results from \cite{BG2}. We will need the following theorem.

\begin{thm}[\cite{GP}, \cite{BG2}]
\label{Theorem_character_asymtotics_2}

Suppose that $\lambda(N)\in \GT_N$, $N=1,2,\dots$ is a regular sequence of signatures such that
 $$
  \lim_{N\to\infty} m[\lambda(N)]=\mes.
 $$
 Then we have
\begin{multline}
 \label{eq_limit_of_logarithm_2}
  \lim_{N \to \infty}
  \pa_{1} \pa_{2} \log \left(\frac{s_{\lambda(N)}(x_1, x_2, \dots, x_k, 1^{N-k})}{s_{\lambda(N)}(1^N)} \right) \\ =
   \pa_{1} \pa_{2} \log \left( 1 - (x_1-1) (x_2-1) \frac{x_1 H'_\mes(x_1) - x_2 H'_{\mes} (x_2)}{x_1-x_2} \right),
 \end{multline}
where the convergence is uniform over an open complex neighborhood of $(x_1,\dots, x_k)=(1^k)$.
\end{thm}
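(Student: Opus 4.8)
The plan is to extract the second--order term in the asymptotic expansion of the normalized character by steepest descent. As in the first--order Theorem \ref{Theorem_character_asymptotics}, the logarithm should decompose as
$$
\log \frac{s_{\lambda(N)}(x_1, \dots, x_k, 1^{N-k})}{s_{\lambda(N)}(1^N)}
= N \sum_{a=1}^{k} \H_\mes(x_a) + \sum_{1 \le a < b \le k} W(x_a, x_b) + o(1),
$$
where the $O(N)$ part is a sum of single--variable contributions, which $\pa_1 \pa_2$ annihilates, and the $O(1)$ part is a sum of \emph{pairwise} interactions $W(x_a, x_b)$ depending only on their two arguments. Granting this structure, $\pa_1 \pa_2$ isolates the single term $\pa_1 \pa_2 W(x_1, x_2)$, so it suffices to identify $W$; this I would do by analyzing the two--variable case $k=2$, in which $x_3, \dots, x_k$ play no role.

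For $k=2$ I would use the contour--integral representation of Gorin and Panova for $\tfrac{s_{\lambda(N)}(x_1, x_2, 1^{N-2})}{s_{\lambda(N)}(1^N)}$. Writing $l_i = \lambda_i(N) + N - i$, this expresses the ratio as an explicit prefactor --- which carries a factor $(x_1 - x_2)^{-1}$ inherited from the Weyl denominator --- times a $2 \times 2$ determinant whose entries are single contour integrals of the shape $\tfrac{1}{2 \pi \ii} \oint x_a^{z} \prod_{i=1}^{N} (z - l_i)^{-1} (\dots)\, dz$. After the rescaling $z = N w$, each integrand becomes $\exp\bigl( N\, \mathcal A_{x_a}(w) \bigr)$ with action $\mathcal A_{x_a}(w) = w \log x_a - \int \log(w - s)\, d m[\lambda(N)](s) + (\text{explicit lower--order terms})$; by regularity of $\lambda(N)$ and the hypothesis $m[\lambda(N)] \to \mes$, this action and its derivatives converge, with the Stieltjes transform of $\mes$ appearing in $\mathcal A_{x_a}'$.

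The saddle $w^{*}(x_a)$ solving $\mathcal A_{x_a}'(w) = 0$ is then governed by $\textup{St}_\mes(w^{*}(x_a)) = \log x_a$, which is exactly the relation built into the definition \eqref{eq_H_function}--\eqref{eq_H_derivative} of $\H_\mes$; evaluating the action at the saddle reproduces $\H_\mes(x_a)$ and confirms the $O(N)$ part. The real content is the $O(1)$ coefficient: combining the prefactor $(x_1 - x_2)^{-1}$ with the Vandermonde factor $w^{*}(x_1) - w^{*}(x_2)$ produced by the determinant and the Gaussian fluctuation factors $\mathcal A_{x_a}''(w^{*}(x_a))^{-1/2}$ from the two saddles, and simplifying by means of $\textup{St}_\mes(w^{*}(x_a)) = \log x_a$ and the identity \eqref{eq_H_derivative} for $\H'_\mes$, one obtains
$$
W(x_1, x_2) = \log\!\left( 1 - (x_1 - 1)(x_2 - 1)\,
\frac{x_1 \H'_\mes(x_1) - x_2 \H'_\mes(x_2)}{x_1 - x_2} \right).
$$
Applying $\pa_1 \pa_2$ gives the right--hand side of \eqref{eq_limit_of_logarithm_2}, and uniformity over a complex neighborhood of $(1^k)$ follows from uniform control of the saddles $w^{*}(x_a)$ for $x_a$ near $1$.

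The main obstacle is the rigorous steepest--descent bookkeeping at the subleading, $O(1)$, order. Unlike the first--order statement, one must here track the exact finite interaction of the two saddles: this requires deforming the integration contours so that they pass through $w^{*}(x_1)$ and $w^{*}(x_2)$ while controlling the confluent structure created by specializing $N-2$ of the variables to $1$ (the singular $(x-1)^{N-1}$--type prefactors); extracting the coupling $w^{*}(x_1) - w^{*}(x_2)$ against the prefactor $(x_1 - x_2)^{-1}$ so that the quotient has a finite limit even as $x_1 \to x_2$; and checking that the result collapses to the stated closed form. This is precisely the analysis carried out in \cite{GP} and \cite{BG2}, whose conclusions we are free to invoke.
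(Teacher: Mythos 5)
The paper itself offers no proof of this theorem --- it is imported verbatim from \cite{GP} and \cite{BG2} --- and your sketch faithfully reconstructs the mechanism of those references: the Gorin--Panova determinantal contour-integral representation, second-order steepest descent at the saddles determined by $\textup{St}_{\mes}(w^{*}(x_a))=\log x_a$, and identification of the pairwise coupling $\log\frac{w^{*}(x_1)-w^{*}(x_2)}{x_1-x_2}$ with the stated closed form, which is correct since \eqref{eq_H_derivative} gives $x H'_{\mes}(x)=w^{*}(x)-\frac{x}{x-1}$ and hence $1-(x_1-1)(x_2-1)\frac{x_1 H'_{\mes}(x_1)-x_2 H'_{\mes}(x_2)}{x_1-x_2}=-(x_1-1)(x_2-1)\frac{w^{*}(x_1)-w^{*}(x_2)}{x_1-x_2}$, differing from your coupling only by single-variable factors that $\pa_1\pa_2$ annihilates. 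Your displayed decomposition technically omits single-variable $O(1)$ corrections (e.g.\ the Gaussian prefactors $\mathcal A''(w^{*}(x_a))^{-1/2}$, which are one-variable and hence harmless under $\pa_1\pa_2$), but this does not affect the argument, and your concluding appeal to \cite{GP}, \cite{BG2} is exactly what the paper does.
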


Construction \ref{con} assigns to each row with number $[2\kappa N]$  of $\mathcal R(N,
\Omega,m)$ a Young diagram given by a $[(1-\kappa)N]$-tuple $\mu^{([(1-\kappa) N])}_{i}=(\mu_1, \dots, \mu_{[(1-\kappa)N]}).$
 We define the moment function as
$$
p_j^{\kappa} := \sum_{i=1}^{[(1-\kappa)N]} \left( \mu^{([(1-\kappa) N])}_{i} + [(1-\kappa) N] - i \right)^j.
$$

\begin{thm}
\label{theorem:covariance-domino}
In the notations above, the collection of random functions
$$
\{ N^{-k} ( p_j^{\kappa} - \E p_j^{\kappa}  ) \}_{0 < \kappa \le 1; j \in \mathbb{N} }
$$
is asymptotically Gaussian with the limit covariance
\begin{multline} \label{eq:covariance-domino}
\lim_{N \to \infty} N^{-j_2-j_1} \cov \left( p_{j_1}^{\kappa_1} , p_{j_2}^{\kappa_2}\right) = \frac{(1-\kappa_1)^{j_1}(1-\kappa_2)^{j_2} }{(2 \pi \ii)^2} \oint_{|z|=\ep} \oint_{|w| = 2 \ep} \left( \frac{1}{z} +1 + (1+z) F_1 (1+z) \right)^{j_1} \\ \times \left( \frac{1}{w} +1 + (1+w) F_2 (1+w) \right)^{j_2} Q (z, w) dz dw,
\end{multline}

where $\ep \ll 1$ and $1 \ge \kappa_1 \ge \kappa_2 >0$,
$$
F_1 (z) = \frac{1}{1-\kappa_1} H'_{\mes_\w} (1+z) + \frac{\kappa_1}{(1-\kappa_1) (z+2) },
$$
$$
F_2 (w) = \frac{1}{1-\kappa_2} H'_{\mes_\w} (1+w) + \frac{\kappa_2}{(1-\kappa_2) (w+2) }
$$
and
$$
Q(z,w) = \pa_z \pa_w \left( \log \left( 1 - z w \frac{ (1+z) H'_{\mes_\w} (1+z) - (1+w) H'_{\mes_\w} (1+w)}{z-w} \right) \right)+\frac{1}{(z-w)^2}.
$$
\end{thm}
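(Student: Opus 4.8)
The plan is to recognize $\mathcal P^N_\omega$ from \eqref{eq:measure1} as a degenerate instance of the time--space measure \eqref{eq:mes-time-space} of \cite{BG2}, and then to extract the two functions $F_t$ and $G_t$ required by Theorem \ref{th:general-for-domino} directly from Lemma \ref{Schur_generating}, so that \eqref{eq:covariance-domino} follows by substitution.

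\textbf{Matching the two models.} First I would set $a_t=1-\kappa_t$, so that the abstract slice of length $[a_tN]$ corresponds to the odd row of $\mathcal R(N,\Omega,m)$ carrying $[(1-\kappa_t)N]$ $V$--squares, whose statistics are encoded by the signature $\mu^{(t)}$ of length $[(1-\kappa_t)N]$. The boundary slice $a=1$ (that is $\kappa=0$) carries the measure $\rho_N=\delta_{\omega(N)}$, whose Schur generating function is $s_{\omega}/s_{\omega}(1^N)$. The multiplicative data $g_i$ are powers of $\prod_j (1+x_j)/2$: this is exactly the operator encoded by the coefficients $\st$ in \eqref{s}, while the length reductions $\pr_{[a_iN]\to[a_{i-1}N]}$ are the one--variable specializations \eqref{p}. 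I would then check that this input is CLT--appropriate: the single-- and double--variable limits of $\log\bigl(s_{\omega(N)}/s_{\omega(N)}(1^N)\bigr)$ exist by Theorems \ref{Theorem_character_asymptotics} and \ref{Theorem_character_asymtotics_2}, using that $\omega(N)$ is regular with $m[\omega(N)]\to\mes_\omega$, and the elementary factors $g_i$ add only explicit terms; the coefficients $\st$ are nonnegative by \eqref{st}.

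\textbf{Reading off $F_t$ and $G_t$.} By Lemma \ref{Schur_generating}, the accumulated generating function $H^\mu_t$ attached to the slice $\kappa_t$ is exactly $\mathcal S^{U(N)}_{\rho^{k}(N)}$ for the corresponding odd row $k$. Normalizing by the local size $[(1-\kappa_t)N]$ and differentiating once, the Schur part contributes $\tfrac{1}{1-\kappa_t}\H'_{\mes_\omega}$ by Theorem \ref{Theorem_character_asymptotics}, while the power $t\approx\kappa_tN$ of the factor $\prod_j(1+x_j)/2$ contributes $\tfrac{\kappa_t}{(1-\kappa_t)(1+x_i)}$; writing $x_i=1+z$ this gives
\[
F_t(z)=\frac{1}{1-\kappa_t}\,\H'_{\mes_\omega}(1+z)+\frac{\kappa_t}{(1-\kappa_t)(z+2)}.
\]
Differentiating twice in distinct variables, the factors $(1+x_i)/2$ drop out because their logarithm is a sum of one--variable functions, so the mixed second derivative sees only the Schur part and, by Theorem \ref{Theorem_character_asymtotics_2} with $x_1=1+z$, $x_2=1+w$,
\[
G_t(z,w)=\pa_z\pa_w\log\left(1-zw\,\frac{(1+z)\H'_{\mes_\omega}(1+z)-(1+w)\H'_{\mes_\omega}(1+w)}{z-w}\right);
\]
in particular $G_t$ is independent of $t$, which is why a single kernel $Q$ appears in the statement.

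\textbf{Conclusion and main difficulty.} Substituting $a_t=1-\kappa_t$, $F_{t_1}=F_1$, $F_{t_2}=F_2$, and $G_{t_2}(z,w)+\tfrac{1}{(z-w)^2}=Q(z,w)$ into the covariance formula of Theorem \ref{th:general-for-domino} produces the prefactor $(1-\kappa_1)^{j_1}(1-\kappa_2)^{j_2}$ and the integrand of \eqref{eq:covariance-domino}, completing the proof. I expect the main obstacle to be the bookkeeping of the dictionary between the tiling and the abstract model---fixing $k=2t+1\leftrightarrow\kappa_t$, length $(1-\kappa)N$, and $a_t=1-\kappa_t$---together with the observation that it is the normalization by the \emph{local} length $[a_tN]$, rather than by $N$, that produces the factors $\tfrac{1}{1-\kappa}$ inside $F_t$ and the powers $(1-\kappa)^{j}$ in front; one must also verify carefully that the hypotheses of Theorem \ref{th:general-for-domino} genuinely hold for the degenerate boundary input $\rho_N=\delta_{\omega(N)}$.
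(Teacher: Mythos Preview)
Your proposal is correct and follows precisely the route the paper takes: the paper's own proof is the one-liner ``It immediately follows from Theorem \ref{th:general-for-domino}, Theorem \ref{Theorem_character_asymtotics_2} and Lemma \ref{Schur_generating},'' and you have simply unpacked that sentence by identifying $a_t=1-\kappa_t$, $\rho_N=\delta_{\omega(N)}$, $g_i=\prod_j(1+x_j)/2$, and then reading off $F_t$ and $G_t$ from the explicit Schur generating functions. Your bookkeeping of the local normalization $[a_tN]=[(1-\kappa_t)N]$ producing the prefactors $(1-\kappa_t)^{j_t}$ and the $1/(1-\kappa_t)$ inside $F_t$ is exactly what the paper leaves implicit.
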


\begin{proof}

It immediately follows from Theorem \ref{th:general-for-domino},
Theorem \ref{Theorem_character_asymtotics_2}  and Lemma \ref{Schur_generating}.

\end{proof}

\begin{rem}
\label{rem:clt}

Let us make several comments about this result.

The general mechanism of Theorem \ref{th:general-for-domino}
automatically produces the central limit theorem for moments. However,
a further analysis is necessary in order to show that the obtained covariance matches the one coming from the Gaussian Free Field. 

One can use another generator of the semigroup with respect to
the quantized free convolution $($see Remark \ref{rem:lln}$)$.  In the
case of rectangular Aztec diamonds we used
$$\prod\limits_i (1+x_i)/2 = \prod\limits_i (1+ \frac{1}{2} (x_i-1)),$$ which can be
understood as an \textit{extreme beta character} with parameter
$\frac{1}{2}$. Instead, one can use formulas $$\prod (1 + \beta
(x_i-1)) \text{ or } \prod (1 + \alpha (x_i-1))^{-1}, \text{ where }
0< \beta<1, \text{ and } 0<\alpha<
+\infty. $$ One can immediately generalize our results to the case
when the probability measure comes from these formulas. We give some details for $0<\beta<1$ case in \nameref{sec:A}.

One can also analyze another types of processes, in particular, Schur processes $($or ensembles of non-intersecting paths, see Section \ref{sec:paths}$)$. In this case one needs to suitably modify Theorem \ref{th:general-for-domino}; these generalizations quite straightforwardly follow from the technique of \cite{BG2}. After this, one can apply the technique of this paper in order to extract all necessary information about limit shapes, frozen boundary, and global fluctuations.

Finally, let us remark that while the moment method proved to be very
convenient for the study of the global behavior of our model, it does
not give insight on the local behavior. Nevertheless, it is possible to
extract some information about it from the known
results on lozenge tilings. We discuss it in \nameref{sec:B}.
\end{rem}

\subsection{Proof of Theorem \ref{theorem:gff-domino} }

The goal of this section is to obtain Theorem \ref{theorem:gff-domino} from Theorem \ref{theorem:covariance-domino}.

First, let us make a change of variables
$$\tilde z =S_{\mes_\w}^{(-1)} \left( \log (1+z) \right), \qquad \tilde w =S_{\mes_\w}^{(-1)} \left( \log (1+w) \right),$$
in equation \eqref{eq:covariance-domino}.

Using the connection between $S_{\mes_\w} (z)$ and $
H'_{\mes_\w} (z)$ (see \eqref{eq_H_derivative}), we have
\begin{align*}
\frac{1}{z} + 1 + \frac{(1+z) H'_{\mes_\w} (z+1)}{1-\kappa_1} = \frac{1}{1-\kappa_1} \left( \frac{1}{\tilde z} + \frac{\kappa_1}{\exp( -S_{\mes_\w} (\tilde z)) - 1} \right), \\
\frac{1}{w} + 1 + \frac{(1+w) H'_{\mes_\w} (w+1)}{1-\kappa_2} = \frac{1}{1-\kappa_2} \left( \frac{1}{\tilde w} + \frac{\kappa_2}{\exp( -S_{\mes_\w} (\tilde w)) - 1} \right), \\
\log \left( \left( \frac{1}{w} +1 + (1+w) H'_{\mes_\w} (w) \right) - \left( \frac{1}{z} +1 + (1+z) H'_{\mes_\w} (z) \right) \right) = \log \left( \frac{1}{\tilde w} - \frac{1}{\tilde z} \right).
\end{align*}
Substituting these equalities and slightly transforming the expression, we obtain that the right-hand side of \eqref{eq:covariance-domino} is equal to
\begin{multline}
\label{eq:gff-covariance-nice-form}
\frac{1}{(2 \pi \ii)^2 } \oint_{|z|=\ep} \oint_{|w| = 2 \ep} \left( \frac{1}{\tilde z} + \frac{\kappa_1}{\exp(-S_{\mes_\w} (\tilde z)) - 1} + \frac{\kappa_1}{\exp( -S_{\mes_\w} (\tilde z))+1} \right)^{j_1} \\ \times \left( \frac{1}{\tilde w} + \frac{\kappa_2}{\exp(-S_{\mes_\w} (\tilde w)) - 1} + \frac{\kappa_2}{\exp( -S_{\mes_\w} (\tilde w))+1} \right)^{j_2} \frac{1}{(\tilde z- \tilde w)^2} d \tilde z d \tilde w.
\end{multline}

For any $0< \kappa \le 1$ the set $\{ z \in \mathbb H: \kappa_{\La} (z) = \kappa \}$ is a well-defined curve in $\mathbb H$. Let $\mathcal Z_{\kappa}$ be a union of this curve and its complex conjugate.

Let $0 < \kappa_1 \le \kappa_2 \le 1$. Given the explicit formulas \eqref{eq:chi} and \eqref{eq:kappa}, it is easy to check that the curve $\mathcal Z_{\kappa_2}$ encircles the curve $\mathcal Z_{\kappa_1}$. Therefore, we can deform contours in \eqref{eq:gff-covariance-nice-form} and obtain
\begin{multline}
\label{eq:gff-2}
\frac{1}{(2 \pi \ii)^2 } \oint_{\tilde z \in \mathcal Z (\kappa_1) } \oint_{ \tilde w \in \mathcal Z(\kappa_2) } \left( \frac{1}{\tilde z} + \frac{ \kappa_1}{\exp(- S_{\mes} (\tilde z)) - 1} + \frac{\kappa_1}{\exp( -S_{\mes_\w} (\tilde z))+1} \right)^{j_1} \\ \times \left( \frac{1}{\tilde w} + \frac{\kappa_2}{\exp(- S_{\mes_\w}(\tilde w)) - 1} + \frac{\kappa_2}{\exp( -S_{\mes_\w} (\tilde w))+1} \right)^{j_2} \frac{1}{(\tilde z- \tilde w)^2} d \tilde z d \tilde w.
\end{multline}
Note that for $\tilde z \in \mathcal Z (\kappa_1)$ the expression
$$
\frac{1}{\tilde z} + \frac{\kappa_1}{\exp(-S_{\mes_\w} (\tilde z)) - 1} + \frac{\kappa_1}{\exp( -S_{\mes_\w}(\tilde z))+1}
$$
is real and is equal to $\chi_{\La}(z),$ see Lemma \ref{structure}. Therefore, we can rewrite \eqref{eq:gff-2} as
\begin{equation*}
-\frac{1}{4 \pi^2 } \oint_{\tilde z \in \mathcal Z (\kappa_1) } \oint_{ \tilde w \in \mathcal Z (\kappa_2) } \chi_{\La} (\tilde z)^{j_1} \chi_{\La} (\tilde w)^{j_2} \frac{1}{(\tilde z- \tilde w)^2} d \tilde z d \tilde w,
\end{equation*}

Integrating \eqref{eq:before-limit-moment} by parts we see that
$$
M^\kappa_ j = \frac{N^{-j+1} \sqrt{\pi}}{j+1} \left( p_{j+1}^\kappa - \E p_{j+1}^ \kappa \right).
$$
Therefore, the set $\{ M^\kappa_ j \}_{1 \ge \kappa >0, j \in \mathbb{N}}$ converges to the Gaussian distribution with zero mean and limit covariance
\begin{multline}
\label{eq:limit-final}
\lim_{N \to \infty} \cov \left( M^{\kappa_1}_ {j_1}, M^{\kappa_1}_ {j_1} \right) = \frac{-1}{4 \pi (j_1+1) (j_2+1) } \oint_{z \in \mathcal Z (\kappa_1)} \oint_{z \in \mathcal Z (\kappa_2)} \chi_{\La} (z)^{j_1} \chi_{\La} (z)^{j_2} \\ \times \frac{d \chi_{\La}(z)}{dz} \frac{d \chi_{\La}(w)}{dw} \frac{1}{(z-w)^2} dz dw.
\end{multline}
By definition, the set $\{ \mathcal M^{\kappa}_ {j} \}_{1 \ge \kappa
  >0, j \in \mathbb{N}}$ is Gaussian with zero mean and covariance
\begin{multline}
\cov \left( \mathcal M^{\kappa_1}_ {j_1}, \mathcal M^{\kappa_2}_ {j_2} \right) = \oint_{z \in \mathbb H: \kappa_{\La} (z) = \kappa_1} \oint_{z \in \mathbb H: \kappa_{\La} (z) = \kappa_2} \chi_{\La} (z)^{j_1} \chi_{\La} (z)^{j_2} \\ \times \frac{d \chi_{\La}(z)}{dz} \frac{d \chi_{\La}(w)}{dw} \left( \frac{-1}{2 \pi} \ln \left| \frac{z- w}{z - \bar{w}} \right| \right) dz dw.
\end{multline}
Using the equality
$$
2 \ln \left| \frac{z- w}{z - \bar{w}} \right| = \ln (z -w) - \ln (z - \bar{w}) - \ln ( \bar{z} - w) + \ln ( \bar{z} - \bar{w}),
$$
we can write it in the form
$$
\cov \left( \mathcal M^{\kappa_1}_ {j_1}, \mathcal M^{\kappa_2}_ {j_2} \right) = -\frac{1}{4 \pi} \oint_{z \in \mathcal Z (\kappa_1)} \oint_{z \in \mathcal Z (\kappa_2)} \chi_{\mes} (z)^{j_1} \chi_{\mes} (z)^{j_2} \frac{d \chi_{\mes}(z)}{dz} \frac{d \chi_{\mes}(w)}{dw} \ln (z-w) dz dw.
$$
Integration by parts shows that this formula coincides with the right-hand side of \eqref{eq:limit-final} which concludes the proof of the theorem. 
\section{Examples of the frozen boundary}
\label{sec:examples}

\subsection{The Aztec diamond}
\label{sec:71}
 The Aztec diamond is a rectangular
Aztec diamond of type $(N, A^1=1, B^1=N) .$  In this case we have
$a_1=0$ and $b_1=1.$

Therefore, we obtain
$$\textup{St}_{\mes_\w}(t)= \log \frac{t}{
  t-1}.$$

Then we solve the equation $\textup{St}_{\mes_\w}(t)=\log z$ for
$t$ and substitute it into \eqref{eq:F} we get
$$\F_{\kappa}(z)=\frac{z}{z-1}+\frac{\kappa z}{(1-\kappa)(z+1)}.$$

Solving explicitly the
equation $$\F_{\kappa}(z)=\frac{\chi}{1-\kappa}$$ we find
$${\bold z}^\kappa_{+}(\chi)=\frac{-1+2\kappa+\sqrt{(2 \chi-1)^2+(2
    \kappa -1)^2-1}}{2(1-\chi)}.
$$

Thus, the density $${\bold d} \mes^\kappa\left(\frac{\chi}{1-\kappa}\right)=\frac{1}{\pi}\textup{Arg}\left(\frac{1-2\kappa-\sqrt{(2 \chi-1)^2+(2
    \kappa -1)^2-1}}{2(\chi-1)}\right),$$
where $(\chi, \kappa)$ is such that $(2 \chi-1)^2+(2
    \kappa -1)^2-1<0.$

Thus, the frozen boundary is given by the equation $$(2 \chi-1)^2+(2
    \kappa -1)^2-1=0,$$

as expected.

 \begin{figure}[h]
\includegraphics[width=0.2\linewidth]{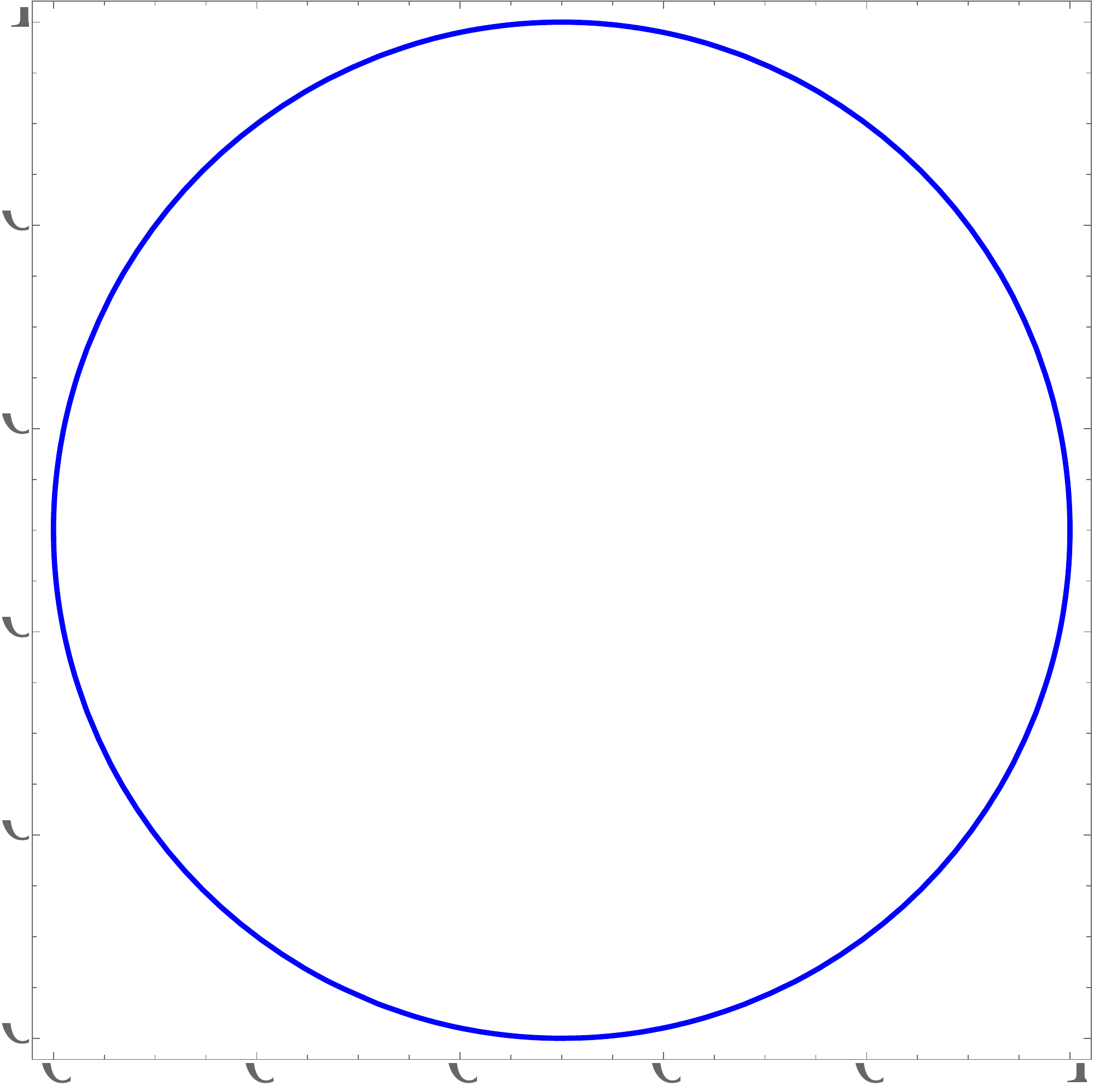}
\caption{The Arctic circle.}
  \label{fig:krug}
\end{figure}

\subsection{The Aztec half-diamond}

\begin{defi} The Aztec half-diamond is a rectangular Aztec diamond
  $\mathcal R(N, \a, N-1),$ where $\a=(1, 3, \dots, 2
  N-1).$
\end{defi}

This domain was considered in \cite{FF} and subsequently in \cite{NY}.

In this case we obtain that $\mes_{\w}$ is a uniform measure on $[0, 2].$
We can explicitly compute
$$\textup{St}_{\mes_{\omega}}(t)=-\frac{1}{2}\log\left( 1-\frac{2}{
    t}\right ).$$
Then we solve the equation $\textup{St}_{\mes_\w}(t)=\log z$ for
$t$ and substitute it into \eqref{eq:F} we get
$$\F_\kappa(z)=\frac{2 z( \kappa -  z )}{(-1 + z) (1 + z)}=\chi.$$
Therefore,
$${\bold d} \mes^\kappa\left(\frac{\chi}{1-\kappa}\right)=\frac{1}{\pi}\textup{Arg} \left(\frac{-\kappa-\sqrt{\kappa^2+\chi(
    \chi-2)}}{\chi-2}\right).
$$
So the frozen boundary is the curve given by the equation
$$\kappa^2+\chi(
    \chi-2)=0$$
for $\kappa\in [0, 1)$ and $\chi\in [0, 2].$

  \begin{figure}[h]
\includegraphics[width=0.3\linewidth]{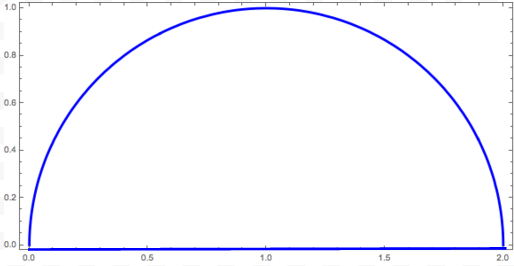}
\caption{The frozen boundary for a uniform measure on $[0,2].$}
  \label{fig:parabola}
\end{figure}

\subsection{The case of the uniform measure on $[0, \theta]$}
\label{sec:73}
We consider a rectangular Aztec diamond
  $\mathcal R(N, \a, (\theta-1) (N-1)),$ where $\a=(1, 1+\theta, \dots, \theta
  (N-1)+1)$ and $\theta \in \mathbb Z_{>0}.$

We obtain that $\mes_{\w}$ is a uniform measure on $[0, \theta].$
In this case we can explicitly compute
$$\textup{St}_{\mes_{\omega}}(t)=-\frac{1}{\theta}\log\left(1-\frac{\theta}{t}\right).$$

Let $\theta=4.$
We can explicitly solve the system (\ref{system}) on the computer
and we obtain that the frozen boundary is the curve given by the equation
$$f(\chi,\kappa)=27\kappa^4+(\kappa^2+(\chi-4)\chi)^3.$$
for $\kappa\in [0, 1)$ and $\chi\in [0, 4].$

  \begin{figure}[h]
\includegraphics[width=0.4\linewidth]{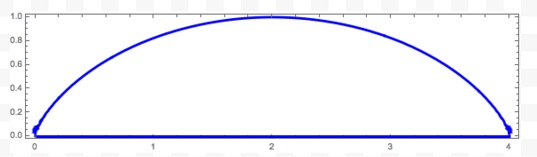}
\caption{The frozen boundary for a uniform measure on $[0,4].$}
  \label{fig:nonparabola}
\end{figure}

\subsection{Frozen boundary for general boundary conditions} For an
arbitrary measure $\mes_w$ the limit shape (and, in particular, the
frozen boundary) is determined by the quantized free projection of
$\mes_w$, see Remark \ref{rem:lln}. However, it is not easy to obtain an
explicit parametrization of the frozen boundary from such a description. We
performed a necessary analysis in some particular cases in Sections \ref{sec:frozen}
and Sections \ref{sec:71}--\ref{sec:73}, and we do not address the general case in details.

Proposition \ref{homeomorphism} gives an explicit connection between
the geometry of the liquid region in our model and the model of random
lozenge tilings of ``half-hexagons'' studied by Duse and Metcalfe
\cite{DM} and \cite{DM1}.  Moreover, the techniques developed by these authors can be directly applied in our situation. Let us mention one
corollary of the results from \cite{DM}, \cite{DM1}.

For a general limit measure the frozen boundary can have a complicated structure (see
the discussion and examples in \cite{DM1}), however, it is always possible to give an explicit
parametrization (not necessarily algebraic) of a {\it part} of
the boundary called the {\it edge}, a natural boundary on which universal asymptotic
behavior is expected. 

For a set $S\subset \mathbb R$ let $\bar S$ denote its
closure and $S^\circ$ denote its interior.
From Lemma 2.2 in
\cite{DM} it follows that $$(\chi_\mathcal L(\cdot),
\kappa_\mathcal L(\cdot))\colon \mathbb H \rightarrow \mathcal L$$
defined by \eqref{eq:chi} and \eqref{eq:kappa}  has a unique
continuous extension to an open set $R\subset \mathbb R$ given by
\begin{equation} \label{eq:R}
 R\colon =\left( \overline{ (\mathbb R \backslash
    \textup{Supp}(\mes_\omega)) \cup (\mathbb R \backslash
    \textup{Supp}(\lambda -\mes_\omega))}\right)^{\circ},
\end{equation}
where $\lambda$ denotes the Lebesgue measure (recall that under our usual
assumptions $\mes_\omega$ is absolutely continuous with respect
to Lebesgue measure and has density $\leq 1$). 

\begin{defi} The edge $\mathcal E$ is a smooth curve which is the image of the extended map
 $(\chi_\mathcal L(\cdot),
\kappa_\mathcal L(\cdot))\colon R\rightarrow \partial \mathcal
L. $
\end{defi}

The formulas \eqref{eq:chi} and \eqref{eq:kappa} for $(\chi_\mathcal L(\cdot),
\kappa_\mathcal L(\cdot))$ give rise to an explicit parametrization of
$\mathcal E.$ It is natural to conjecture that
in a generic situation the edge is not an algebraic curve (its
parametrization involves exponents of the Stieltjes transform of the
limit measure), but we do not address this question formally. In the case considered in Section \ref{sec:frozen} when
$\mes_{\omega}$ is a uniform measure on a union of segments $R=\mathbb
R$ and the parametrization is the same as the one obtained in Theorem \ref{frozen_b}.

\section{Appendix A}
\label{sec:A}
Let $0<\beta<1.$
Consider the following more general measure on the set of tilings of
rectangular Aztec diamond $\mathcal R(\a,N,m).$  In the case of the
Aztec diamond it was discussed in \cite{CJY}.

Let $\mu^{(n)}$ and $\nu^{(n)}$ be two non-negative signatures of length
$n.$ Define the coefficients
$\st_{\b}(\mu^{(n)} \rightarrow \nu_{\b}^{(n)})$ and $\pr(\nu^{(n)} \rightarrow \mu^{(n-1)})$ via
\begin{equation}  \label{sb}
\frac{s_{\mu^{(n)}}(u_1,\dots,u_n)}{s_{\mu^{(n)}}(1^n)}\prod\limits^{n}_{i=1}(1+\b(u_i-1))=\sum_{\nu^{(n)}\in\GT_n}\st_\b(\mu^{(n)} \rightarrow \nu^{(n)}) \frac{s_{\nu^{(n)}}(u_1,\dots,u_n)}{s_{\nu^{(n)}}(1^n)},
\end{equation}
\begin{equation} \label{pb}
\frac{s_{\nu^{(n)}}(u_1,\dots,u_{n-1},1)}{s_{\nu^{(n)}}(1^n)}=\sum_{\mu^{(n-1)}\in\GT_{n-1}}\pr_\b(\nu^{(n)} \rightarrow \mu^{(n-1)}) \frac{s_{\mu^{(n-1)}}(u_1,\dots,u_n)}{s_{\mu^{(n-1)}}(1^n)},
\end{equation}

Analogously to the construction from Section \ref{sec:2} we can define a probability measure on the sequences of signatures of the
form
$$\mathcal S^N=\{(\mu^{(N)}, \nu^{(N)},\dots,\mu^{(1)}, \nu^{(1)})\}$$ by the formula
\begin{multline} \label{eq:measure}
\mathcal P^{N,\b}_{\mu}((\mu^{(N)}, \nu^{(N)},\dots,\mu^{(1)}, \nu^{(1)}))=\\
=1_{\mu^{(N)}=\mu}\st_\b(\mu^{(N)} \rightarrow \nu^{(N)}) \prod\limits_{j=1}^{N-1}
( \pr_\b(\nu^{(N-j+1)} \rightarrow \mu^{(N-j)})\st_\b(\mu^{(N-j)} \rightarrow \nu^{(N-j)})),
\end{multline}
where $\mu^{(i)},\nu^{(i)} \in \GT_i. $

Let $\mathcal R(\a, N,m)$ be a rectangular Aztec
  diamond and $\omega$ be a
  signature corresponding to its boundary row. Then the measure $\mathcal P^{N, \b}_{\omega}$ corresponds to some measure on the set of domino tilings $\mathfrak D(\a, N,m)$. When $\beta=\frac{1}{2}$ we know from Proposition \ref{uniform} that the corresponding measure is the uniform measure, for general $\beta$ in can be shown in the same fashion that the corresponding measure is $\mathcal P_q$ for $q=\frac{\beta}{1-\beta}$ defined by

\begin{equation} \label{b_measure}
\mathcal P_{q}(D\in \mathfrak D(\a, N, m))=\frac{q^{\text{number of the horizontal dominos in }D}}{(1+q)^{N(N+1)/2}s_{\omega}(1^{N})}.
\end{equation}

This fact in the case of the Aztec diamond is well known, see \cite{J2} and \cite{BF}; for a recent
exposition see \cite{BCC}.

Next, using the very same arguments as in the case of  $\beta=\frac{1}{2}$ we can study the asymptotics of random domino tilings of $\mathcal R(\a, N,m)$ with respect to $\mathcal P_{q}.$
We decided to include only the statements of the results in this case.

\begin{prop}  \label{densityb} Consider $N\rightarrow \infty$ asymptotics such that all the
 dimensions of a rectangular Aztec diamond $\mathcal R(N, \a(N), m(N))$ linearly grow with
 $N$. Let $(\chi, \kappa)$ be the new continuous coordinates of the domain.
 Assume that the sequence of signatures $\omega(N)$ corresponding to the
 first row is regular and $ \lim\limits_{N\to\infty}
 m[\omega(N)]=\mes_\omega.$ Let us fix $\kappa \in (0, 1)$ and let
$\mes_q^\kappa$ be the limit of measures $m[\rho_{q}^k(N)],$ induced by $\mathcal P_{q}$  at the level $k.$
The density
of $\mes_q^\kappa$ can be computed in the following way
\begin{equation} \label{eq:density_formula1}
{\bold d} \mes_q^\kappa(x)=\frac{1}{\pi}\textup{Arg}( {\bold
  z}^\kappa_+(x)),
\end{equation}
where ${\bold z}^{\kappa, q}_+(x)$ is the unique complex root of the system
 \begin{equation} \label{sysb}
  \begin{dcases}
\F^\b_{\ka}(z,t)=\frac{z}{(1-\ka)}\Big (\frac{t}{z}- \frac{1}{(z-1)}+\frac{\kappa q}{1+q + q(z-1)} \Big ) + \frac{z}{z - 1}= x, \\
\textup{St}_{\mes_\omega}(t) =  \log(z).
  \end{dcases}
\end{equation}
which belongs to the upper half-plane. This
formula is valid for such $(x, \kappa)$ that the complex root
exists, the density is equal to zero or one otherwise.
\end{prop}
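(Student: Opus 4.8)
The plan is to repeat verbatim the analysis of Section \ref{sec:4}, observing that the only structural change produced by replacing the uniform weight $\prod_i \frac{1+u_i}{2}$ of \eqref{s} by the weight $\prod_i (1+\b(u_i-1))$ of \eqref{sb} is the replacement of the term $\frac{\kappa}{1+z}$ inside $\F_\ka$ by $\frac{\kappa q}{1+q+q(z-1)}=\frac{\kappa q}{1+qz}$, where $q=\frac{\b}{1-\b}$. Since $\b=\tfrac12$ corresponds to $q=1$, the system \eqref{sysb} then specializes to \eqref{sys}, so that Theorem \ref{density} is recovered as the special case of the present proposition.

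First I would establish the analogue of Lemma \ref{Schur_generating}: using \eqref{sb} in place of \eqref{s}, the Schur generating function of $\rho_q^k(N)$ acquires factors $(1+\b(u_j-1))^{t}$ (resp. $^{t+1}$) in place of $\left(\frac{1+u_j}{2}\right)^{t}$ (resp. $^{t+1}$). Feeding this into Theorem \ref{theorem_moment_convergence} exactly as in the proof of Proposition \ref{ourmoment_convergence} yields the moments of $\mes_q^\kappa$ with
$$Q'(u)=\frac{1}{1-\ka}\left(\frac{\kappa q}{1+qu}+\H'_{\mes_\w}(u)\right),$$
the new $\kappa$-term arising from $\frac{d}{du}\log(1+\b(u-1))=\frac{\b}{1+\b(u-1)}=\frac{q}{1+qu}$. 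Using $\H'_{\mes_\w}(z)=\frac{t(z)}{z}-\frac{1}{z-1}$ with $t(z)=\textup{St}^{(-1)}_{\mes_\w}(\log z)$ (the identity \eqref{eq_H_derivative}), one checks the key relation $zQ'(z)+\frac{z}{z-1}=\F^\b_\ka(z,t(z))$, after which the residue computation of Lemma \ref{series} carries over unchanged to give $\textup{St}_{\mes_q^\kappa}(x)=\log(z^{\kappa,q}(x))$, where $z^{\kappa,q}$ is the composite inverse of $\F^\b_\ka(z,t(z))$. Consequently $\mathbf Z^{\kappa,q}(x)=\exp(\textup{St}_{\mes_q^\kappa}(x))$ solves \eqref{sysb}.

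The only step that requires genuine verification, and the main obstacle, is the analogue of Lemma \ref{root}: that for a uniform $\mes_\w$ on $\bigcup_i[a_i,b_i]$ the system \eqref{sysb} has at most one conjugate pair of complex roots. Solving the first equation of \eqref{sysb} for $t$ now gives
$$t(z,\ka,x)=x(1-\ka)+\frac{\kappa(1+q)z}{(z-1)(1+qz)},$$
whose derivative equals $-\frac{\ka(1+q)(qz^2+1)}{[(z-1)(1+qz)]^2}$. Since $q>0$ this is strictly negative off the poles, so $t(\cdot,\ka,x)$ is strictly decreasing on each of $(-\infty,-\tfrac1q)$, $(-\tfrac1q,1)$, $(1,\infty)$, which is exactly the monotonicity used in Lemma \ref{root} with the pole $z=-1$ simply moved to $z=-\tfrac1q$. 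The interlacing of the $a_i$ and $b_i$ then produces, precisely as before, interlacing real roots of the numerator and denominator of the rational equation $G(z,x)=z$ obtained by substituting $t(z,\ka,x)$ into the second equation, and counting real roots against the degree leaves room for at most one complex conjugate pair.

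Finally, I would close the argument exactly as in the proof of Theorem \ref{density}: the sign of $z_j'(x)$ at a real root forces $\textup{Im}(z_j(x_0+i\varepsilon))>0$, while $\textup{Im}(\mathbf Z^{\kappa,q}(x_0+i\varepsilon))<0$ follows directly from the Stieltjes representation, so $\mathbf Z^{\kappa,q}$ must coincide with the unique root in the upper half-plane; Lemma \ref{support} then yields $\mathbf d\,\mes_q^\kappa(x)=\frac1\pi\textup{Arg}(\mathbf z^\kappa_+(x))$. A general measure $\mes_\w$ is handled by weak approximation by uniform measures on unions of intervals, as in the concluding line of the proof of Theorem \ref{density}.
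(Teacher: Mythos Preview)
Your approach is correct and is exactly what the paper intends: the authors explicitly write that ``using the very same arguments as in the case of $\beta=\frac{1}{2}$'' one obtains Proposition~\ref{densityb}, and omit the proof entirely. Your proposal carries this out in detail, correctly identifying that the only change is the replacement of $\frac{\kappa}{1+z}$ by $\frac{\kappa q}{1+qz}$, and verifying that the monotonicity needed for the analogue of Lemma~\ref{root} survives (the pole at $z=-1$ simply moves to $z=-\tfrac1q$).

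One small slip: after establishing $\textup{Im}(\mathbf Z^{\kappa,q}(x_0+i\varepsilon))<0$ while the perturbed real roots have positive imaginary part, your conclusion should be that $\mathbf Z^{\kappa,q}$ coincides with the unique complex root in the \emph{lower} half-plane, not the upper; the root $\mathbf z^\kappa_+$ appearing in the density formula is then its complex conjugate, exactly as in the proof of Theorem~\ref{density}.
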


\begin{thm} \label{frozen_bb}
The frozen boundary of the limit of a random rectangular Aztec diamond of type
$(N, A^{(s)}, B^{(s)})$ with respect to the measure $\bold P^{N,q}_{\omega}$ is a rational algebraic curve $C_\b$ with an
explicit parametrization \eqref{eq:curveb}. Moreover, its
dual $C_q^{\vee}$ is of degree $2s$
and is given in the following parametric form
\begin{equation}\label{eq:curveb}
C_q^{\vee}=\left(\theta,
\text{ }  \frac {(1+q)\theta \Pi_s(\theta)}{ (\Pi_s(\theta)-1)(q\Pi_s(\theta)+1)} \right) ,
\end{equation}
where

$$\Pi_s(\theta)=\frac{(1-a_1\theta)(1-a_2 \theta)\cdots(1-a_s \theta)}{(1-b_1 \theta)(1-b_2
  \theta)\cdots(1-b_s \theta)}.$$
\end{thm}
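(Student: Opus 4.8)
The plan is to repeat the argument of Theorem~\ref{frozen_b} with the uniform system \eqref{sys} replaced by its $\b$-deformation \eqref{sysb} supplied by Proposition~\ref{densityb}. For a rectangular Aztec diamond of type $(N, A^{(s)}, B^{(s)})$ the limit measure $\mes_\omega$ is the uniform measure on $\bigcup_i [a_i,b_i]$, so $\textup{St}_{\mes_\omega}(t)=\log\Pi_s(t)$ with $\Pi_s(t)=\prod_{i=1}^s\frac{t-a_i}{t-b_i}$, and the second equation of \eqref{sysb} reads $z=\Pi_s(t)$. As before I abbreviate $\L\Pi_s(t):=\log\Pi_s(t)$.

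First I would solve the first equation of \eqref{sysb} for $z$. Clearing denominators and using the simplification $1+q+q(z-1)=1+qz$, the equation $\F^{\b}_{\ka}(z,t)=\tfrac{\chi}{1-\ka}$ reduces to
\begin{equation*}
t-\chi=\frac{\ka(1+q)\,z}{(z-1)(1+qz)},
\end{equation*}
which is again a quadratic in $z$; its two roots $z^{\pm}(t,\ka)$ therefore carry a single square root, exactly as in the uniform case, to which they specialize at $q=1$ (recovering $z^{\pm}=\frac{\ka\pm\sqrt{\ka^2+(t-\chi)^2}}{t-\chi}$).

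Next, by Proposition~\ref{densityb} (the $\b$-analogue of Theorem~\ref{density}), the point $(\chi,\ka)$ lies on the frozen boundary precisely when, after substituting $z=z^{\pm}(t,\ka)$ into $z=\Pi_s(t)$, the resulting equation in $t$ acquires a double root. I would thus impose the double-root system
\begin{equation*}
\begin{cases}
\Pi_s(t)=z^{\pm}(t,\ka),\\[2pt]
\L\Pi_s(t)'=\bigl(\log z^{\pm}(t,\ka)\bigr)',
\end{cases}
\end{equation*}
and solve it for $\chi$ and $\ka$ as explicit rational functions of $t$, $\Pi_s(t)$ and $\L\Pi_s(t)$, yielding the parametrization of the primal curve $C_{\b}$ in the same shape as \eqref{eq:curveC}. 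To obtain the dual I would apply \eqref{eq:dual_par}, using $\Pi_s(t)'=\Pi_s(t)\,\L\Pi_s(t)'$ to simplify, and pass to $\theta=1/t$; the factors of $q$ propagate to give \eqref{eq:curveb}, which specializes to \eqref{eq:curve} at $q=1$. Writing $\Pi_s=P_a/P_b$ with $P_a,P_b$ of degree $s$, the dual parametrization becomes $y=\frac{(1+q)\theta\,P_a P_b}{(P_a-P_b)(qP_a+P_b)}$; since $P_a(0)=P_b(0)=1$ both numerator and denominator share the common factor $\theta$, and after cancelling it the implicit equation has degree $2s$, so $\deg C_q^{\vee}=2s$.

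The conceptual skeleton is identical to Theorem~\ref{frozen_b}; the genuine work is the algebraic bookkeeping of the extra factor $(1+qz)$ through the quadratic, the double-root elimination, and the dualization. The main obstacle I anticipate is twofold and essentially computational: first, re-running the interlacing/monotonicity analysis of Lemma~\ref{root}, where the poles of the map $z\mapsto t(z)$ now sit at $z=1$ and $z=-1/q$ (rather than $z=\pm1$), so I must re-check that $t(z)$ is monotone on each of the three resulting intervals and that the roots of the numerator and denominator of $\Pi_s(t)-z^{\pm}$ still interlace, guaranteeing at most one complex-conjugate pair and hence that the density formula \eqref{eq:density_formula1} carries over; and second, verifying that the lengthy elimination indeed collapses to the compact closed form \eqref{eq:curveb}.
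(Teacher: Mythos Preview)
Your proposal is correct and matches the paper's approach exactly: the paper gives no proof of Theorem~\ref{frozen_bb}, stating only that ``using the very same arguments as in the case of $\beta=\frac{1}{2}$'' one obtains the result, and your outline is precisely the execution of those arguments with the deformed system~\eqref{sysb}. Your simplification $t-\chi=\frac{\ka(1+q)z}{(z-1)(1+qz)}$ is correct, the location of the poles at $z=1$ and $z=-1/q$ is correctly identified, and your plan for the degree count and the dualization is the same as in Theorem~\ref{frozen_b}; the anticipated obstacles you flag (re-verifying the interlacing argument of Lemma~\ref{root} for Proposition~\ref{densityb}, and the algebraic collapse to~\eqref{eq:curveb}) are exactly the computational checks the paper leaves implicit.
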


The Central Limit Theorem \ref{theorem:covariance-domino} is also generalized straightforwardly, we omit its statement.

 \begin{figure}[h]
\includegraphics[width=0.5\linewidth]{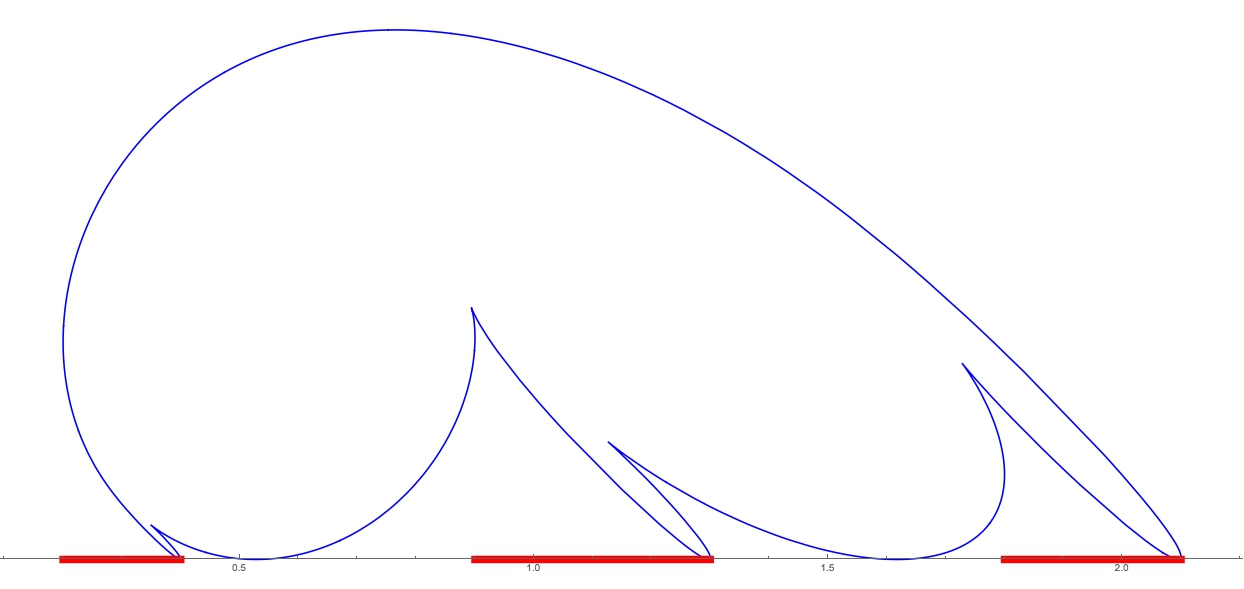}
  \label{fig:cloudb}
\caption{An example of a curve $C_\b$ with three boundary segments and $q=0.0099$.}
\end{figure}

 \begin{figure}[h]
\includegraphics[width=0.5\linewidth]{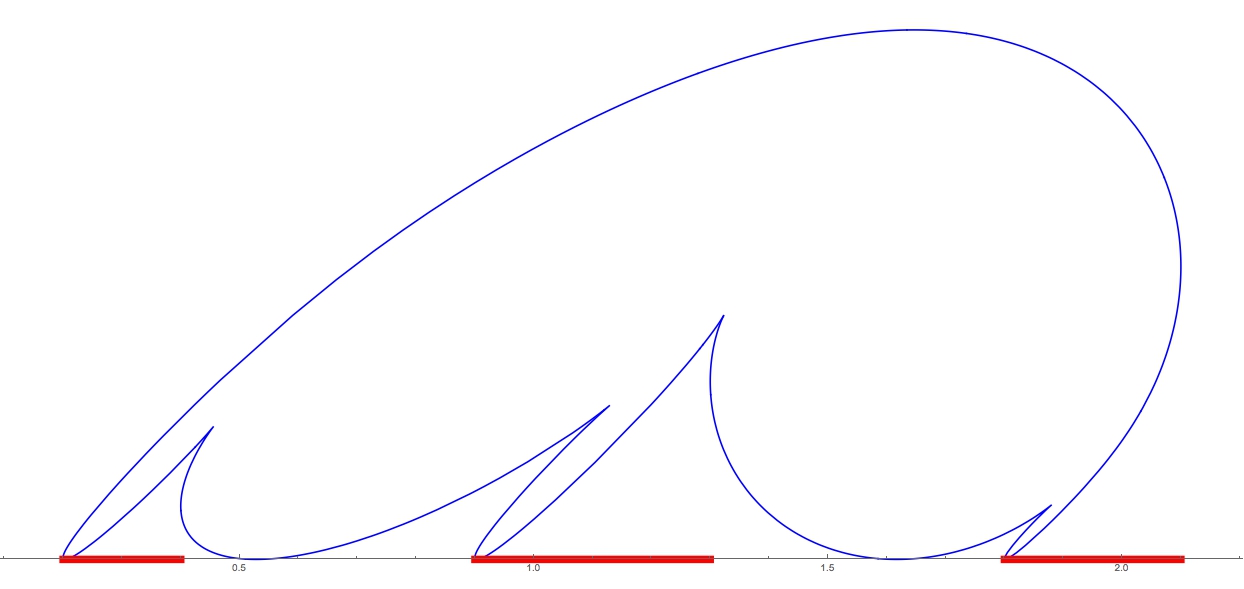}
  \label{fig:cloudb2}
\caption{An example of a curve $C_\b$ with three boundary segments and $q=99$.}
\end{figure}

\section{Appendix B}
\label{sec:B}
In this section we show that the local fluctuations in our model are governed by the discrete sine kernel. The conjectured local limit first appeared in \cite{KOS}, and the proof for the Aztec diamond is given in \cite{CJY}. This is an immediate corollary of a result of \cite{G} about lozenge tilings, which uses the methods from \cite{P1}. Such a derivation is possible because the distribution of a signature on one level of our probabilistic model can be obtained from some random lozenge tilings model. However, it does not seem possible to obtain the two-dimensional local (or global) behavior with the help of this relation between domino and lozenge tilings.

First, let us formulate the general result from \cite{G}. Assume that $\{ \rho_N \}_{N \ge 1}$ is a sequence of probability measures which satisfies assumptions of Theorem \ref{theorem_moment_convergence}.
Let $0<a<1$, and let $\rho^a_N$ be the probability measure on $\GT_{[ a N]}$ with the Schur generating function 
\begin{equation} \label{eq:sch1}
\mathcal S^{U(N)}_{\rho^a_N} (u_1, \dots, u_{[a N]}) = \left. \mathcal S^{U(N)}_{\rho_N} (u_1, \dots, u_{N}) \right|_{u_{[aN]+1}=1, \dots, u_N=1},
\end{equation}
(such a measure exists because the coefficients in the branching rule for Schur functions are nonnegative).
In fact, due to \eqref{eq:sch1} the measure $\rho^a_N$ encodes the distribution of random lozenge tilings with random boundary conditions $\rho_N$ in the model considered by Petrov in \cite{P1}.
By Theorem \ref{theorem_moment_convergence}, the random probability measures $m[\rho^a_N]$ converge to a deterministic measure $\mes^a$. Let $\phi^{(a)} (x)$ be the density of $\mes^a$ with respect to the Lebesgue measure (recall that it always exist and takes values in $[0;1]$).

For $p \in (0;1)$ a \textit{discrete sine kernel} is defined by the formula
$$
K_p (y_1, y_2) = \frac{\sin (p \pi (y_1-y_2))}{\pi (y_1 - y_2)}, \qquad y_1, y_2 \in \mathbb Z.
$$

Let $\la = (\la_1 \ge \la_2 \ge \dots \ge \la_{[a N]})$ be a random signature distributed according to $\rho^a_N$. For $x_1, \dots, x_m \in \mathbb Z$ denote by $\theta^{(m)} (x_1, \dots, x_m)$ the probability that $\{ x_1, \dots, x_m \} \subset \{ \la_i +N-i \}_{i=1 \dots [aN]}$.
\begin{prop}
\label{prop:local-behavior}
Let $x \in \mathbb{R}$, and let $x(N)$ be a sequence of integers such that $x(N)/N \to x$, as $N \to \infty$. For $m \in \mathbb N$ let $x_1(N), \dots, x_m(N)$ be sequences of integers such that $x_i (N) - x(N)$ does not depend on $N$, $i=1, \dots, m$. 

Then, in the assumptions and notations above, we have
$$
\lim_{N \to \infty} \theta^{(m)} (x_1(N), \dots, x_m (N)) = \det_{i,j=1}^m \left[ K_{\phi^{(a)} (x)} (x_i(N), x_j(N)) \right],
$$
 $($note that the right-hand side does not depend on $N$ $)$.
\end{prop}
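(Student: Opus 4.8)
The plan is to deduce this statement directly from the bulk universality result of \cite{G}, whose hypotheses are exactly the moment-convergence conditions of Theorem \ref{theorem_moment_convergence} that we have assumed for $\{\rho_N\}$. The proposition is, in effect, a reformulation of that result in the language adapted to our setting, so the argument consists of matching the two frameworks rather than producing a new asymptotic analysis.

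First I would make the lozenge-tiling interpretation precise. The defining relation \eqref{eq:sch1} says that the Schur generating function of $\rho^a_N$ is obtained from that of $\rho_N$ by setting $u_{[aN]+1}=\dots=u_N=1$. By the branching rule for Schur functions, this is precisely the operation that produces the distribution of the $[aN]$-th level in a sequence of interlacing signatures built by successive one-variable restrictions; equivalently, $\rho^a_N$ is the law of the particle configuration $\{\la_i+N-i\}_{i=1}^{[aN]}$ arising in the random lozenge tiling model with (random) boundary $\rho_N$ studied in \cite{P1}. In that model the positions $\{\la_i+N-i\}$ form a determinantal point process, and \cite{P1} supplies an explicit double-contour-integral expression for its correlation kernel, so that $\theta^{(m)}(x_1(N),\dots,x_m(N))$ is literally the $m$-point inclusion probability of this process and equals the $m\times m$ determinant of that kernel.

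Second I would invoke the asymptotic analysis of \cite{G}. The key point established there is that, under the moment convergence of the Schur generating functions, a steepest-descent analysis of the Petrov kernel---whose exponential factor is governed by the limiting function $Q$ of Theorem \ref{theorem_moment_convergence}---shows that near a point $x$ with $\phi^{(a)}(x)\in(0,1)$ the integral is dominated by a pair of complex-conjugate critical points of the associated action. Localizing at the scale where the spacings $x_i(N)-x(N)$ are fixed integers, the kernel converges entrywise to the discrete sine kernel $K_{\phi^{(a)}(x)}$, the density parameter $p=\phi^{(a)}(x)$ arising as the normalized argument of the critical point and coinciding with the local density of the limiting measure $\mes^a$. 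The determinantal formula for the correlation functions then yields
$$\lim_{N\to\infty}\theta^{(m)}(x_1(N),\dots,x_m(N))=\det_{i,j=1}^{m}\left[K_{\phi^{(a)}(x)}(x_i(N),x_j(N))\right].$$

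The substantive content is entirely contained in \cite{G}; our only tasks are to verify that the hypotheses transfer---namely that $\{\rho_N\}$ satisfies the moment-convergence assumption of Theorem \ref{theorem_moment_convergence}, which holds by hypothesis---and to identify the sine-kernel density with $\phi^{(a)}(x)$. The main technical obstacle, the saddle-point localization of the determinantal kernel to the discrete sine kernel and the verification that the critical points are complex conjugate exactly on the liquid region $\phi^{(a)}(x)\in(0,1)$, is therefore imported rather than reproved here.
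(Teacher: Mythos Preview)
Your proposal is correct and follows the same route as the paper: the paper's proof is the single sentence ``Proposition~\ref{prop:local-behavior} immediately follows from Theorem~4.1 of \cite{G} and Theorem~\ref{theorem_moment_convergence},'' and your write-up is an expanded version of exactly that citation, spelling out why the hypotheses of \cite{G} are met and how the sine-kernel density is identified with $\phi^{(a)}(x)$.
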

Proposition \ref{prop:local-behavior} immediately follows from Theorem 4.1 of \cite{G} and Theorem \ref{theorem_moment_convergence}.

Let us apply it to our setting. Let $k=k(N)$ be a sequence of integers such that $k(N)/N \to a$ as $N \to \infty$.
Recall that $\rho^{k(N)}$ is a probability measure on signatures of length $N-\lfloor\frac{k(N)-1}{2}\rfloor$ coming from uniform domino tilings, see Section \ref{sec:3}. Recall that the measures $m[\rho^{k(N)}]$ have a limit measure $\mes^{\kappa}$. The expression for its density ${\bold d} \mes^{\kappa}$ is given by Theorem \ref{density}.

Now let $\la = (\la_1 \ge \la_2 \ge \dots \ge \la_{N-\lfloor\frac{k-1}{2}\rfloor})$ be a random signature distributed according to $\rho^{k(N)}$. For $x_1, \dots, x_m \in \mathbb Z$ denote by $\tilde\theta^{(m)} (x_1, \dots, x_m)$ the probability that $\{ x_1, \dots, x_m \} \subset \{ \la_i +N-i \}_{i=1 \dots N-\lfloor\frac{k-1}{2}\rfloor}$.

\begin{prop}
Let $x \in \mathbb{R}$, and let $x(N)$ be a sequence of integers such that $x(N)/N \to x$, as $N \to \infty$. For $m \in \mathbb N$ let $x_1(N), \dots, x_m(N)$ be sequences of integers such that $x_i (N) - x(N)$ does not depend on $N$, $i=1, \dots, m$.

Then
$$
\lim_{N \to \infty} \tilde \theta^{(m)} (x_1(N), \dots, x_m(N)) = \det_{i,j=1}^m \left[ K_{{\bold d} \mes^{\kappa} (x)} (x_i(N), x_j(N)) \right],
$$
\end{prop}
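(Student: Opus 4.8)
The plan is to recognize the level-$k$ marginal $\rho^{k(N)}$ of the domino model as the restriction of a measure on $\GT_N$ that satisfies the hypotheses of Theorem \ref{theorem_moment_convergence}, and then to invoke Proposition \ref{prop:local-behavior}. Write $t=\lfloor\frac{k-1}{2}\rfloor$, so that $t/N\to a/2$; set $\kappa:=a/2$. The signatures distributed according to $\rho^{k(N)}$ then have length $N-t$ with $(N-t)/N\to 1-\kappa$, and the point is that $\rho^{k(N)}$ is exactly of the form $\rho^{a'}_N$ appearing in \eqref{eq:sch1}, with restriction ratio $a'=1-\kappa\in(0,1)$.

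To make this precise I would first introduce the auxiliary measure $\rho_N$ on $\GT_N$ whose Schur generating function is
$$
\mathcal S^{U(N)}_{\rho_N}(u_1,\dots,u_N)=\frac{s_\w(u_1,\dots,u_N)}{s_\w(1^N)}\prod_{i=1}^{N}\left(\frac{1+u_i}{2}\right)^{t}.
$$
This is a bona fide probability measure, being the law of the signature obtained from $\w$ by applying the stochastic operator $\st$ of Lemma \ref{odin} exactly $t$ times. I would then check that $\rho_N$ satisfies the assumptions of Theorem \ref{theorem_moment_convergence}: by Theorem \ref{Theorem_character_asymptotics} together with $t/N\to\kappa$,
$$
\lim_{N\to\infty}\frac1N\ln\mathcal S^{U(N)}_{\rho_N}(u_1,\dots,u_j,1^{N-j})=\sum_{i=1}^{j}\Big(\H_{\mes_\w}(u_i)+\kappa\ln\tfrac{1+u_i}{2}\Big),
$$
so the required limit holds with $Q(u)=\H_{\mes_\w}(u)+\kappa\ln\frac{1+u}{2}$, which is analytic near $u=1$.

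The crucial identity is that restricting $\rho_N$ to $N-t$ variables reproduces $\rho^{k(N)}$. Setting $u_{N-t+1}=\dots=u_N=1$ and using $\frac{1+1}{2}=1$ gives
$$
\mathcal S^{U(N)}_{\rho_N}(u_1,\dots,u_{N-t},1^{t})=\frac{s_\w(u_1,\dots,u_{N-t},1^{t})}{s_\w(1^N)}\prod_{i=1}^{N-t}\left(\frac{1+u_i}{2}\right)^{t},
$$
which is precisely $\mathcal S^{U(N)}_{\rho^{k(N)}}$ by Lemma \ref{Schur_generating} (the even case $k=2t+2$ is identical, with exponent $t+1$ and $t$ variables set to $1$). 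Hence $\rho^{k(N)}=\rho^{a'}_N$ in the notation of \eqref{eq:sch1}, and Proposition \ref{prop:local-behavior} applies directly, producing the discrete sine kernel $K_{\phi^{(a')}(x)}$, where $\phi^{(a')}$ is the density of $\mes^{a'}=\lim_N m[\rho^{a'}_N]=\lim_N m[\rho^{k(N)}]$. By Proposition \ref{ourmoment_convergence} this limit is exactly $\mes^\kappa$, whose density was computed in Theorem \ref{density}; thus $\phi^{(a')}={\bold d}\mes^\kappa$ and the statement follows.

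I expect the only genuinely delicate point to be the bookkeeping of normalizations and shifts when identifying $\phi^{(a')}$ with ${\bold d}\mes^\kappa$: the particles in Proposition \ref{prop:local-behavior} sit at $\{\la_i+N-i\}$ and are scaled by $N$, whereas $\mes^\kappa$ in Proposition \ref{ourmoment_convergence} is built from the empirical measure of $\{\la_i+(N-t)-i\}$ scaled by $N-t=(1-\kappa)N$. One must verify that these two empirical densities coincide as functions of the common physical location, so that the sine-kernel intensity at a site with $x(N)/N\to x$ is read off from ${\bold d}\mes^\kappa$ at the corresponding argument; this is exactly the rescaling by $(1-\kappa)$ already present in Theorem \ref{LLN} and Definition \ref{liquid_region}. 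Everything else is a direct citation of Proposition \ref{prop:local-behavior} together with the limit-shape results of Sections \ref{sec:3} and \ref{sec:4}.
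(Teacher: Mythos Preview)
Your proposal is correct and follows exactly the paper's approach: the paper's proof is the single line ``It directly follows from Lemma \ref{Schur_generating} and Proposition \ref{prop:local-behavior},'' and you have simply unpacked what that entails by exhibiting the auxiliary measure $\rho_N$ on $\GT_N$, checking the hypotheses of Theorem \ref{theorem_moment_convergence}, and identifying the restriction with $\rho^{k(N)}$ via Lemma \ref{Schur_generating}. Your caution about the bookkeeping of scalings is prudent, but since both $\theta^{(m)}$ in Proposition \ref{prop:local-behavior} and $\tilde\theta^{(m)}$ here use the same shifted coordinates $\la_i+N-i$, and since $\mes^{a'}=\lim m[\rho^{a'}_N]=\lim m[\rho^{k(N)}]=\mes^\kappa$ as measures, the identification $\phi^{(a')}={\bold d}\mes^\kappa$ is immediate.
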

 $($note that the right-hand side does not depend on $N$ $)$.
\begin{proof}
It directly follows from Lemma \ref{Schur_generating} and Proposition \ref{prop:local-behavior}.
\end{proof}

%Let $0<a<1$. Assume that a probability measure $\tilde \rho$ on the set $\GT_{[a N]}$ has a Schur generating function of the form 
%$$
%S^{U(N)}_{\tilde \rho} (u_1, \dots, u_{[a N]}) = \mathbf F (u_1, \dots, u_N) \right|_{u_{[aN]+1}=1, \dots, u_N=1}
%$$ 


\begin{thebibliography}{9}

\bibitem{AGZ} G. ~Anderson, A. ~Guionnet, O. ~Zeitouni,
 An introduction to random matrices, \emph{Cambridge Studies in Advanced Mathematics}, vol 118, Cambridge University Press.
\bibitem{BCV} D. ~Betea, C. ~Boutillier,  J. ~Bouttier, G. ~Chapuy,
S.~Corteel, M.  ~Vuletic, Perfect sampling algorithms for Schur
processes, Preprint (2015), \url{http://arxiv.org/pdf/1407.3764.pdf}.

\bibitem{BBO} A. ~Borodin, A. ~Bufetov, G. ~Olshanski, Limit shapes for growing extreme characters of U($\infty$),
  \emph{Ann. of Applied Prob.}
2015, Vol. 25, No. 4, 2339--2381,
\url{https://arxiv.org/pdf/1311.5697.pdf.}

\bibitem{BF1} A. ~Borodin, P.L.  ~Ferrari, Anisotropic KPZ growth in 2 + 1 dimensions, \emph{Comm. Math. Physics}, 
325 (2014), 603--684, 
\url{ https://arxiv.org/pdf/0804.3035v2.pdf}.

\bibitem{BF} A. ~Borodin, P.L.  ~Ferrari, Random tilings and Markov chains for interlacing
particles, Preprint (2015),
\url{http://arxiv.org/pdf/1506.03910v1.pdf}.


\bibitem{BS} A. ~Borodin , S. ~Shlosman, Gibbs Ensembles of Nonintersecting Paths \emph{Comm. in Math. Physics},
293 (2010), 145–-170,
\url{http://arxiv.org/pdf/0804.0564v1.pdf}.

\bibitem{BBCCR} C. ~Boutillier, J. ~Bouttier, G. ~Chapuy, S. ~Corteel, and S. ~Ramassamy,
Dimers on Rail Yard Graphs, Preprint, (2015),
\url{http://arxiv.org/abs/1504.05176}.


\bibitem{BCC} J. ~Bouttier, G. ~Chapuy, S. ~Corteel, From Aztec diamonds
  to pyramids: steep tilings, Preprint, (2014),
\url{http://arxiv.org/abs/1407.0665}.




\bibitem{BG}
A. ~Bufetov, V. ~Gorin, Representations of classical Lie groups and
quantized free convolution,
\emph{Geometric and Functional Analysis}, June 2015, Volume 25, Issue 3, pp 763-814,
\url{http://arxiv.org/pdf/1311.5780.pdf}.

\bibitem{BG2} A. ~Bufetov, V. ~Gorin, Fluctuations of particle systems
  determined by Schur generating functions, Preprint (2016), 
\url{https://arxiv.org/pdf/1604.01110v1}.

\bibitem{Car} P. ~Cartie, Introduction a l'etude des mouvements browniens a plusieurs parametres, \emph{S\'eminaire de probabilit\'es (Strasbourg)}, 5:58--75, 1971.

\bibitem{CJ} S. ~Chhita, K. ~Johansson, Domino statistics of the
  two-periodic Aztec diamond, Preprint, (2014),
\url{http://arxiv.org/pdf/1410.2385.pdf}.

\bibitem{CJY} S. ~Chhita, K. ~Johansson, B. ~Young, Asymptotic domino statistics in the Aztec diamond, \emph{Ann. of Applied Prob.}, (2015), Vol. 25, No. 3, 1232--1278,
\url{http://arxiv.org/pdf/1212.5414v3.pdf}.

\bibitem{C1}  M.  ~Ciucu, Perfect matchings of cellular graphs,
  \emph{J. Algebraic Combin.}, 5 (1996), 87--103.

\bibitem{C2} M.  ~Ciucu, Enumeration of perfect matchings in graphs with reflective symmetry, \emph{J. Comb. Theory},
Ser. A 77 (1997), 67--97.

\bibitem{CEP} H. ~Cohn, N. ~Elkies, J. ~Propp, Local statistics for random domino
tilings of the Aztec diamond, \emph{Duke Math. J.}, 85 (1996), 117--166,
\url{http://arxiv.org/pdf/math/0008243.pdf}.

\bibitem{CKP} H. ~Cohn, R. ~Kenyon, J. ~Propp, A variational principle for domino tilings. \emph{J. Amer. Math. Soc.},
14(2):297--346 (electronic), (2001),
\url{http://arxiv.org/pdf/math/0008220v3.pdf}.


\bibitem{DM} E. ~Duse, A. ~Metcalfe, Asymptotic geometry of discrete
  interlaced patterns: Part I, Preprint, (2014),
\url{http://arxiv.org/abs/1412.6653}.

\bibitem{DM1} E. ~Duse, A. ~Metcalfe, Asymptotic geometry of discrete
  interlaced patterns: Part II, Preprint, (2015),
\url{https://arxiv.org/pdf/1507.00467v2.pdf}.

\bibitem{EKLP} N. ~Elkies, G. ~Kuperberg, M. ~Larsen, and
  J. ~Propp, Alternating-sign matrices and domino tilings I,
\emph{ Journal
Algebraic Combin.}, 1(2):111--132, (1992),
\url{http://arxiv.org/pdf/math/9201305v1.pdf}.

\bibitem{FS} P. L. ~Ferrari, H. ~Spohn, Domino tilings and the
  six-vertex model at its free-fermion point, \emph{ J. Phys.}
A, 39(33):10297--10306, 2006, 
\url{http://arxiv.org/pdf/cond-mat/0605406v1.pdf}.

\bibitem{FF} B. J. ~Fleming, P. J. ~Forrester, Interlaced particle systems and tilings of the
Aztec diamond, \emph{J. Stat. Physics,} (2011), 142, 441--459, 
\url{http://arxiv.org/pdf/1004.0474v1.pdf.}

\bibitem{G} V.~Gorin, Bulk universality for random lozenge tilings near straight boundaries and for tensor products, Preprint, (2016),
\url{arXiv:1603.02707}


\bibitem{GP} V.~Gorin, G.~Panova,
Asymptotics of symmetric polynomials with applications to statistical mechanics and representation
theory, \emph{Annals of Probability},
Volume 43, Number 6 (2015), 3052--3132,
\url{http://arxiv.org/pdf/1301.0634v6.pdf}.

\bibitem{GM} A. ~Guionnet, M.~Ma\"\i da,
A Fourier view on the R-transform and related asymptotics of spherical integrals, \emph{Journal of
Functional Analysis},  222 (2005), no.~ 2,  435--490,
\url{http://arxiv.org/pdf/math/0406121v3.pdf}.

\bibitem{HG} H. A. ~Helfgott, I.  ~Gessel, Enumeration of Tilings of
  Diamonds and Hexagons with Defects, \emph{ Elec. J. Comb.},
6 No. 1, R16 (1999), 21 pp,
\url{http://arxiv.org/abs/math/9810143}.

\bibitem{JPS} W. ~Jochush, J. ~Propp, P. ~Shor, Random domino tilings and the arctic circle
theorem, Preprint, (1995), \url{http://arxiv.org/abs/math/9801068}.

\bibitem{J} K. ~Johansson, The arctic circle boundary and the Airy
  process, \emph{Annals of Probability}, 33(1):1--30, 2005,
\url{http://arxiv.org/pdf/math/0306216.pdf}.

\bibitem{J2}K. Johansson,  Non-intersecting paths, random tilings and random matrices, \emph{Probability
Theory Related Fields}, 123, 225--280, 2002,
\url{http://arxiv.org/pdf/math/0011250v1.pdf}.

\bibitem{K} R. ~Kenyon, Dominos and the Gaussian free field,
  \emph{Annals of Probability} ,29 (2001), 1128--1137,
\url{http://arxiv.org/pdf/math-ph/0002027.pdf}.

\bibitem{K1}  R. ~Kenyon, Height fluctuations in the honeycomb dimer model,
\emph{Comm. Math. Physics}, 281 (2008) 675--709,
\url{http://arxiv.org/pdf/math-ph/0405052v2.pdf}.

\bibitem{K2} R.  ~Kenyon, Lectures on dimers, 2009,
\url{https://arxiv.org/abs/0910.3129}.

\bibitem{KO}
R. ~Kenyon, A. ~Okounkov, Limit shapes and the complex Burgers equation,
\emph{Acta Mathematica,} December 2007, Volume 199, Issue 2, pp
263-302,
\url{http://arxiv.org/pdf/math-ph/0507007v3.pdf}.

\bibitem{KOS} R. ~Kenyon, A. ~Okounkov, S. ~Sheffield,
Dimers and Amoebae, \emph{ Ann. of Math.}, (2) 163 (2006), no.3, 1019--1056.

\bibitem{M} W. H.  ~Mills, D. P.  ~Robbins, and H.  ~Rumsey, Alternating sign matrices and descending plane
partitions, \emph{J. Comb. Theory Ser.} A 34 (1983), 340--359.

\bibitem{NS} A. ~Nica, R. ~Speicher, Lectures on the
  Combinatorics of Free Probability, \emph{Cambridge University Press},
  2006.

\bibitem{NY} E. ~Nordenstam, B. ~Young, Domino shuffling on
  Novak half-hexagons and Aztec half-diamonds. \emph{Electronic
    J. Combin.,} 18(1), Paper 181, 22, 2011,
\url{http://arxiv.org/pdf/1103.5054.pdf}.

\bibitem{OR} A. ~Okounkov, N., ~Reshetikhin Correlation func
tion of Schur process with application to local geometry of a random
3-dimensional Young diagram,\emph{ J.Amer. Math. Soc.,} 16 (2003), no.3,
581--603,
\url{https://arxiv.org/pdf/math/0107056v3.pdf}.

\bibitem{P1} L.  ~Petrov, Asymptotics of Random Lozenge Tilings via Gelfand-Tsetlin Schemes,
\emph{Probability Theory and Related Fields}, 160 (2014), no. ~ 3, 429--487,
\url{http://arxiv.org/pdf/1202.3901v2.pdf}.

\bibitem{P2} L.  ~Petrov, Asymptotics of Uniformly Random Lozenge Tilings of Polygons. Gaussian Free Field,
\emph{Annals of Probability}, 43 (2014), no. 1, 1--43,
\url{http://arxiv.org/pdf/1206.5123.pdf}

 \bibitem{She} S. ~Sheffield, Gaussian free fields for mathematicians, \emph{Probability Theory and Related Fields},  139:521--541, (2007),  \url{http://arxiv.org/pdf/math/0312099v3.pdf}.

\bibitem{St}  J.R. ~Stembridge, Nonintersecting paths, Pfaffians, and plane partitions,
\emph{Adv. Math.}, 83 (1990), 96--131.

\bibitem{T} W. P. ~Thurston, Conway's tiling groups, \emph{Amer. Math. Monthly,} 97 (1990), 757--773.

\bibitem{VDN} D. ~Voiculescu, K. ~Dykema, A. ~Nica, Free random variables, \emph{CRM Monograph Series}, vol. 1,
American Mathematical Society, Providence, RI, 1992.
\bibitem{W} H. Weyl, The Classical Groups: Their invariants and
  representations. Princeton, University Press, 1939.
\end{thebibliography}
\end{document}